\newcommand{\be}{\begin{equation}}
\newcommand{\ee}{\end{equation}}
\newcommand{\beano}{\begin{eqn*}} 
	\newcommand{\eeano}{\end{eqnarray*}}
\newcommand{\ba}{\begin{array}}
	\newcommand{\ea}{\end{array}}
\declaretheoremstyle[headfont=\normalfont]{normalhead}
\newtheorem{theorem}{Theorem}[section]
\newtheorem{theoremalph}{Theorem}[section]
\newtheorem{lemma}[theorem]{Lemma}
\newtheorem{corollary}[theorem]{Corollary}
\newtheorem{definition}[theorem]{Definition}
\newtheorem{proposition}[theorem]{Proposition}
\newtheorem{remark}[theorem]{Remark}
\newtheorem{example}[theorem]{Example}
\newcommand{\diag}{\mathrm{diag}}
\numberwithin{equation}{section}
\newcommand{\bt}{\begin{theorem}}
\newcommand{\et}{\end{theorem}}
\newcommand{\bco}{\begin{corollary}}
\newcommand{\eco}{\end{corollary}}
\newcommand{\bd}{\begin{definition}}
\newcommand{\ed}{\end{definition}}
\newcommand{\bp}{\begin{problem}}
\newcommand{\ep}{\end{problem}}
\newcommand{\bl}{\begin{lemma}}
\newcommand{\el}{\end{lemma}}
\newcommand{\bprop}{\begin{proposition}}
\newcommand{\eprop}{\end{proposition}}
\newcommand{\br}{\begin{remark}}
\newcommand{\er}{\end{remark}}
\newcommand{\bpf}{\begin{proof}}
\newcommand{\epf}{\end{proof}}
\newcommand{\bex}{\begin{example}}
\newcommand{\eex}{\end{example}}
\begin{document}
\title{Finite Splittings of Differential Matrix Algebras}
\author{Amit Kulshrestha}
\email{amitk@iisermohali.ac.in}
\author{Kanika Singla}
\email{ph15016@iisermohali.ac.in}
\address{IISER Mohali, Knowledge City, Sector 81, Mohali 140 306 INDIA}
\thanks{This work is supported by DST, India through the research grant EMR/2016/001516.}
\subjclass[2010]{12H05, 16H05}
\keywords{differential algebras, splitting of central simple algebras}

\begin{abstract}
It is well known that central simple algebras are split by suitable finite Galois extensions of their centers. In \cite{Magid-Lourdes-DCSA}
a counterpart of this result was studied in the set up of differential matrix algebras, wherein Picard-Vessiot extensions that split
matrix differential algebras were constructed. In this article, we exhibit instances of differential matrix algebras which are split by finite extensions. In some cases, we relate the existence of finite splitting extensions of a differential matrix algebra to the triviality of its tensor powers, and show in these cases, that orders of differential matrix algebras divide their degrees.
\end{abstract}
\maketitle

\section{Introduction} Let $(K,\delta)$ be a differential field of characteristic zero.
Let $A$ be a central simple algebra over $K$ and $\mathcal D : A \to A$ be a derivation
such that $\mathcal D|_K = \delta$. Such a pair $(A,\mathcal{D})$ is called
a \emph{differential central simple algebra} over $(K, \delta)$.
If $A$ is a matrix algebra over $K$ and $\mathcal D$ is the derivation $\delta^c$ obtained by applying $\delta$ to each matrix entry,
then $(A,\mathcal D)$ is called a \emph{split} differential central simple algebra.
A differential extension $(E, \delta_E)$ of $(K, \delta)$ is said to be a \emph{splitting field} of $(A, \mathcal D)$ if the differential central simple algebra $(A, \mathcal D) \otimes (E, \delta_E)$ 
obtained after a base change to $(E, \delta_E)$ is split.
In \cite{Magid-Lourdes-DCSA}, Juan and Magid prove that if the field of constants of $\delta$ is algebraically closed,
then every differential central simple algebra $(A, \mathcal D)$ admits a Picard-Vessiot extension $(E,\delta_E)$ that is also a splitting field of $(A, \mathcal D)$. Picard-Vessiot extensions have a large transcendence degree. However,
in this article, we are interested in finite splitting fields of differential matrix algebras. 
A splitting extension $(E, \delta_E)$ of $(K, \delta)$ is called \emph{finite} if the
degree of $E$ over $K$ is finite.

If $\mathcal D$ is a derivation on a matrix algebra $M_n(K)$ such that $\mathcal D|_K = \delta$, then by an application of Skolem-Noether theorem, $\mathcal D(X) = \delta^c(X) + PX - XP$, where $P$ is a traceless matrix. We prove three types of results in this article.

\begin{enumerate}
\item Conditions on $P$ that ascertain the existence of finite splitting extensions. (Theorem \ref{split-by-finite})
\item Calculation of the minimal transcendence degree of splitting extensions, if finite splitting extensions
do not exist. (see \S\ref{section-transcendental-extensions-splitting-dcsa}) 
\item Relating the existence of finite splitting extensions to the existence of an $m \in \mathbb N$ such that
the $m$-fold tensor power $(M_n(K), \mathcal D)^{\otimes m}$ is split. (Theorem \ref{no-algebraic-then-infinite-order}, Corollary \ref{order-divides-degree})
\end{enumerate}
Since a central simple algebra is split by a finite extension, the case of differential matrix algebras is
a significant step while dealing with splitting fields of arbitrary differential central simple algebras.

The organization of the article is as follows. In  \S\ref{section-preliminaries}, we set up notations, briefly recall the results of \cite{Magid-Lourdes-DCSA} and formulate lemmas on which our proofs are based.
We develop tools for the proofs of the main results of the article in \S \ref{splitting-fields-of-S}. 
Given a finite subset $S \subseteq K \setminus \{0\}$, we find differential extensions, called \emph{splitting fields of} $S$, that contain solutions of $\delta(y) = ay$ for every $a \in S$. The crucial part of this section is the decomposition
$\mathcal A_S + \mathcal A_S' = \langle S \rangle$, where
$\mathcal A$ is the $\mathbb Q$-span of the set of logarithmic derivatives of $\delta$, $\langle S \rangle$ is the additive subgroup of $K$ generated by $S$,
$\mathcal A_S = \mathcal A \cap \langle S \rangle$ and
$\mathcal A_S'$ is a $\mathbb Z$-complement of $\mathcal A_S$ in
$\langle S \rangle$. The group $\mathcal A_S$ governs the
algebraic part, and the group $\mathcal A_S'$ governs the
transcendental part of a splitting field of $S$. The following theorem makes it more precise.

\begin{theoremalph} \label{TheoremA}
Let $S \subseteq K \backslash\{0\}$ be a finite subset.
If $\mathcal A_S' = 0$, then there exists a splitting field $(E, \delta_E)$ of $S$ such that the field extension $E/K$ is finite. If $\mathcal A_S' \neq 0$ and $(E, \delta_E)$ is a splitting field of $S$, then $E$ is transcendental over $K$ and ${\rm trdeg}_K(E) \geq {\rm rank}(\mathcal A_S')$. Further, this lower bound on ${\rm trdeg}_K(E)$ is sharp.
(Theorem \ref{transcendental-iff-AS-prime-nonzero}, Corollary \ref{trdeg-and-rank-AS-prime}, Proposition \ref{deg_tr})
\end{theoremalph}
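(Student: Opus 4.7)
The mechanism driving the whole statement is that solving $\delta(y) = ay$ is the same as realizing $a$ as a logarithmic derivative, and logarithmic derivatives are additive under products: $\delta(\prod y_i^{n_i})/\prod y_i^{n_i} = \sum n_i \delta(y_i)/y_i$. Producing a splitting field of $S$ therefore amounts to producing logarithmic-derivative preimages of a $\mathbb Z$-generating set of $\langle S\rangle$, and it is legitimate to pick generators compatible with the decomposition $\langle S\rangle = \mathcal A_S \oplus \mathcal A_S'$. This decomposition is available because $\langle S\rangle$ is finitely generated and torsion-free (characteristic zero) and $\mathcal A_S = \mathcal A \cap \langle S\rangle$ is pure in $\langle S\rangle$, so the quotient is free.

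For the first assertion I would treat $\mathcal A_S' = 0$ directly: then every $a \in S$ lies in $\mathcal A$, so $a = \sum_i (p_i/q)\,\delta(u_i)/u_i$ for suitable $u_i \in K^{\times}$, $p_i \in \mathbb Z$, and $q \in \mathbb Z_{>0}$. A $q$-th root $y$ of $\prod_i u_i^{p_i}$ then satisfies $\delta(y)/y = a$ and lies in a finite algebraic extension of $K$. Taking the compositum over the finitely many $a \in S$ gives the required finite splitting field.

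For the lower bound on the transcendence degree, fix a $\mathbb Z$-basis $a_1,\ldots,a_r$ of $\mathcal A_S'$ and use that each $a_i$ is a $\mathbb Z$-combination of elements of $S$ to extract, in any splitting field $(E, \delta_E)$, elements $z_i \in E^{\times}$ with $\delta(z_i)/z_i = a_i$. The crux is to show that $z_1,\ldots,z_r$ are algebraically independent over $K$; I would argue this via the Kolchin--Ostrowski theorem on exponentials of integrals, which converts any algebraic relation into a nontrivial multiplicative relation $\prod z_i^{n_i} = c \in K^{\times}$. Such a relation forces $\sum n_i a_i = \delta(c)/c \in \mathcal A \cap \langle S\rangle = \mathcal A_S$, while $\sum n_i a_i$ visibly lies in $\mathcal A_S'$; hence the sum is zero, contradicting the $\mathbb Z$-independence of the $a_i$.

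Finally, for sharpness, I would build an extension of transcendence degree exactly $r$ by adjoining indeterminates $t_1,\ldots,t_r$ to $K$ and declaring $\delta(t_i) = a_i t_i$, then handling the $\mathcal A_S$-component of each $s \in S$ algebraically via the first step. I expect the principal obstacle to be the independence argument: the classical Kolchin--Ostrowski theorem is usually stated under the hypothesis that the field of constants does not enlarge, so in the general splitting field $(E,\delta_E)$ one must either invoke a version of the theorem tolerant of new constants or first restrict to an intermediate differential subfield in which this hypothesis can be verified directly.
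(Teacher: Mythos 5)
Your overall blueprint matches the paper's: reduce splitting fields of $S$ to producing logarithmic-derivative preimages of a $\mathbb Z$-basis of $\langle S\rangle$ adapted to the decomposition $\mathcal A_S \oplus \mathcal A_S'$; handle the $\mathcal A_S$-part by extracting roots of the $v_a$ witnessing $a\in\mathcal A$; realize the $\mathcal A_S'$-part via indeterminates $t_a$ with $\delta(t_a)=at_a$; and deduce $\mathrm{trdeg}_K(E)\geq\mathrm{rank}(\mathcal A_S')$ from algebraic independence of solutions of $\delta(y)=ay$ for $a$ ranging over a basis of $\mathcal A_S'$. The first and third items are essentially identical to what the paper does (Theorem~\ref{transcendental-iff-AS-prime-nonzero}(i) and Proposition~\ref{deg_tr}).

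Where you genuinely diverge is the independence argument, and this is also where the potential gap you flag is real. You invoke the Kolchin--Ostrowski theorem to convert an algebraic relation among the $z_i$ into a multiplicative one $\prod z_i^{n_i}\in K^\times$, then argue $\sum n_i a_i\in\mathcal A_S\cap\mathcal A_S'=0$. The final contradiction step is exactly right and is the same mechanism the paper uses. But Kolchin--Ostrowski, in the form you need, is stated under $C_E=C_K$, and a splitting field of $S$ will in general have new constants; your two proposed workarounds are not immediate (the differential subfield $K(z_1,\ldots,z_r)$ can itself acquire new constants, so the restriction does not obviously help, and a version tolerant of new constants would need to be cited precisely). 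The paper avoids this entirely by proving the required independence from scratch in Proposition~\ref{independence-of_s_a}: first one shows $K$-linear independence of $\{s_a:a\in\mathcal A_S'\}$ via a minimality argument (take a shortest relation $\sum \ell_i s_{a_i}=0$, differentiate, eliminate, and extract $a_1-a_i=\delta(\ell_i\ell_1^{-1})(\ell_i\ell_1^{-1})^{-1}\in\mathcal A\cap\mathcal A_S'=0$), and this step makes no hypothesis on $C_E$ at all because the coefficients $\ell_i$ live in $K$ and so contribute only logarithmic derivatives in $K$; algebraic independence is then reduced to this linear independence via the normalization $s_as_b=s_{a+b}$. The paper also treats the purely algebraic obstruction (Theorem~\ref{transcendental-iff-AS-prime-nonzero}(ii)) by a direct ``differentiate the minimal polynomial'' computation, which again works regardless of new constants. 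Your plan is therefore correct in outline but, as written, is contingent on a version of Kolchin--Ostrowski you have not verified; replacing that citation with the elementary minimality argument would close the gap and is in fact what the paper does.
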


In the last two sections of this article, we fix the differential field to be $(K, \delta)$, where
$K = F(t)$ and $\delta$ is the derivation on $K$ determined by $\delta(t) = t$ and $\delta(a) = 0$ for $a \in F$.
Here, $F$ is a field of characteristic $0$.
The main section of the paper is \S\ref{section-algebraic-extensions-splitting-dcsa}, where we prove the following theorem concerning the existence of a finite splitting field of matrix differential algebras over $(K, \delta)$.

\begin{theoremalph}\label{TheoremB}
Let $P \in M_n(K)$ be a traceless matrix and $\mathcal D_P(X) = \delta^c(X) + PX - XP$.
\begin{enumerate} \setlength\itemsep{0.5em}
\item[(i).] If $P \in M_n(F)$ then $(M_n(K) , \mathcal{D}_P)$ admits a finite splitting field if and only if $P$ is diagonalizable and every eigenvalue of $P$ is rational. (Theorem \ref{split-by-finite})
\item[(ii).] 
If $P \notin M_n(F)$ then $(M_n(K), \mathcal{D}_P)$ admits a finite splitting field if $P$ has one of the following forms. (Theorem \ref{splitting-of-non-constant-matrices})
\begin{enumerate} 
\item $P={\rm diag}(d_1,d_2,\dots,d_n)$ and each
$d_i \in \mathcal A$.
\item $P = (p_{ij})$, $p_{ij}\in F[t]$ is an upper triangular matrix satisfying 
$p_{ii}\in \mathbb Q$ and $p_{ii}\mathbb Z\neq p_{jj}\mathbb Z\in \mathbb Q/\mathbb Z$, unless $i = j$.
\end{enumerate}
\end{enumerate}
\end{theoremalph}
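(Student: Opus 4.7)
The plan is to reduce splitting of $(M_n(K), \mathcal{D}_P)$ over an extension $E/K$ to the existence of a fundamental matrix $Q \in GL_n(E)$ satisfying $\delta^c(Q) = -PQ$; such a $Q$ (after absorbing a scalar gauge into a possibly larger but still finite extension) implements the splitting isomorphism with the canonical derivation $\delta^c_E$. In the Picard--Vessiot closure of $(K, \delta)$, one fundamental solution is $Q_0 = \exp(-P \log t)$, and any other is of the form $Q_0 C$ for a constant $C \in GL_n(\overline{F})$; this observation will drive the necessity direction of part (i).

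Part (ii)(a) is an immediate application of Theorem~A. Setting $S := \{-d_1, \ldots, -d_n\}$, the hypothesis $d_i \in \mathcal{A}$ gives $\langle S \rangle \subseteq \mathcal{A}$, so $\mathcal{A}_S = \langle S \rangle$ and $\mathcal{A}_S' = 0$. Theorem~A then provides a finite extension $E/K$ containing nonzero $y_i$ with $\delta(y_i) = -d_i y_i$, and $Q := \mathrm{diag}(y_1, \ldots, y_n) \in GL_n(E)$ is the required diagonal fundamental matrix.

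For part (ii)(b) I would construct $Q$ upper-triangular over $L := F(t^{1/N})$, where $N$ is a common denominator of $p_{11}, \ldots, p_{nn}$, by back-substitution column by column. Writing $u = t^{1/N}$, one has the $F$-linear decomposition $F[u^{\pm 1}] = \bigoplus_{r \in \mathbb{Z}/N\mathbb{Z}} u^r F[t, t^{-1}]$. Since $p_{ii}\mathbb{Z} \neq p_{jj}\mathbb{Z}$ in $\mathbb{Q}/\mathbb{Z}$ forces $p_{ii} - p_{jj} \notin \mathbb{Z}$ for $i \neq j$, the subspaces $t^{-p_{ll}} F[t, t^{-1}] \subset L$ occupy pairwise distinct residue classes modulo $N$. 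Fixing a column index $j$, setting $q_{jj} := t^{-p_{jj}}$ and inducting downward on $i < j$, I would seek
\[
  q_{ij} \;=\; \sum_{i \leq l \leq j} t^{-p_{ll}}\, h_{ijl}(t), \qquad h_{ijl} \in F[t, t^{-1}].
\]
Substituting into $\delta(q_{ij}) + p_{ii}\, q_{ij} = -\sum_{i < k \leq j} p_{ik}\, q_{kj}$ and isolating each residue class reduces the problem to scalar equations $\delta(h) + c\, h = f$ with $f \in F[t, t^{-1}]$ and $c = p_{ii} - p_{ll} \notin \mathbb{Z}$. As $\delta + c$ acts on $t^k$ by multiplication by $k + c$, which is nonzero for every $k \in \mathbb{Z}$ when $c \notin \mathbb{Z}$, this operator is a bijection on $F[t, t^{-1}]$ and the scalar equation has a unique Laurent polynomial solution. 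This closes the induction and produces $Q \in GL_n(L)$, proving that $L/K$ is a finite splitting field.

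For part (i), the ``if'' direction is a special case of (ii)(a): diagonalizing $P$ over a finite $F'/F$ containing the eigenvectors gives $P = R D R^{-1}$ with $R \in GL_n(F')$ and $D = \mathrm{diag}(r_1, \ldots, r_n)$ with $r_i \in \mathbb{Q} \subseteq \mathcal{A}$, whence (ii)(a) applies over $(F'(t), \delta)$. The main obstacle is the ``only if'' direction; my strategy is to base-change to $F' := F(\lambda_1, \ldots, \lambda_n)$, which preserves finiteness of the splitting field over $K$, put $P$ in Jordan form, and compare any hypothetical $Q \in GL_n(E')$ to $Q_0 = \exp(-P \log t)$ via $Q = Q_0 C$ for some $C \in GL_n(\overline{F})$. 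Within a Jordan block of size $m$ with eigenvalue $\lambda$, entries of $Q_0$ take the form $t^{-\lambda} (-\log t)^{k-i}/(k-i)!$, polynomials in $\log t$ of degree up to $m-1$. Since $\log t$ is transcendental over $K$ while $E'/K$ is algebraic, every positive power of $\log t$ in each entry of $Q_0 C$ must cancel identically, which forces the rows of $C$ indexed by non-top positions of each Jordan block to vanish; if any block has size $m > 1$, this alone makes $C$ non-invertible. Similarly, $t^{-\lambda}$ is algebraic over $K$ only when $\lambda \in \mathbb{Q}$, so any irrational eigenvalue would force the corresponding rows of $C$ to vanish, again contradicting invertibility. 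Therefore $P$ must be diagonalizable with rational eigenvalues.
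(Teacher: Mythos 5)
Your reduction to a fundamental matrix $Q\in{\rm GL}_n(E)$ with $\delta^c(Q)=-PQ$ is equivalent to the paper's criterion (Corollary \ref{trivialize}, via $Q=(Y^t)^{-1}$), and parts (ii)(a), (ii)(b) and the ``if'' half of (i) are correct. For (ii)(a) and (i)-``if'' you follow essentially the paper's route (Theorem A plus a constant conjugation, cf.\ Lemma \ref{similar-matrices}); for (ii)(b) your construction is a pleasant, more explicit variant: you work concretely in $F(t^{1/N})$ and invert $\delta+c$ on Laurent polynomials using $c=p_{ii}-p_{ll}\notin\mathbb Z$, whereas the paper performs the same back-substitution over an abstract finite splitting field of the diagonal entries, dividing coefficients by $l+p_{jj}-p_{nn}\neq 0$. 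Both hinge on the same non-resonance condition, and your grading by residue classes of $t^{1/N}$ is sound.

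The genuine gap is in the ``only if'' half of (i). You compare a hypothetical fundamental matrix $Q\in{\rm GL}_n(E')$, $E'$ finite over $\overline F(t)$, with $Q_0=\exp(-P\log t)$ and write $Q=Q_0C$ with $C\in{\rm GL}_n(\overline F)$. This presupposes (a) a common differential extension $M$ containing both $E'$ and elements $\ell$, $t^{\lambda_i}$ with $\delta(\ell)=1$, $\delta(t^{\lambda_i})=\lambda_i t^{\lambda_i}$, and (b) that $C_M=\overline F$, so that $C:=Q_0^{-1}Q$ has entries in $\overline F$. Neither is addressed: a compositum of $E'$ with an extension containing $\ell$ and the $t^{\lambda_i}$ can acquire new constants unless one chooses the exponentials compatibly with the $\mathbb Q$-linear relations among $1,\lambda_1,\dots,\lambda_n$ (this is exactly what the paper's $\mathcal A_S/\mathcal A_S'$ decomposition and Proposition \ref{independence-of_s_a} control), and if $C_M\supsetneq\overline F$ your cancellation argument no longer forces rows of $C$ to vanish without a further independence statement over the larger constant field. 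Moreover, since $F$ is an arbitrary field of characteristic zero, ``$\log t$'' and ``$t^\lambda$'' are not functions but abstract solutions, so the facts you invoke --- that any $\ell$ with $\delta(\ell)=1$ is transcendental over every algebraic extension of $K$, that a nonzero solution of $\delta(y)=\lambda y$ algebraic over $K$ forces $\lambda\in\mathbb Q$, and that $t^\lambda$ is not even rational in $\ell$ over $\overline K$ --- are precisely the content of the paper's Lemma \ref{F-intersection-A-is-Q}, Theorem \ref{transcendental-iff-AS-prime-nonzero}(ii), Lemma \ref{derivative_1}/Corollary \ref{derivative_1_in_alg_ext} and Lemma \ref{A_K-and-A_K(w)} (or of a Kolchin--Ostrowski-type theorem, which you neither prove nor cite). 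As written, the necessity direction assumes the key differential-algebraic input rather than establishing it; supplying these lemmas (or quoting them) would repair the argument, and then your fundamental-matrix comparison becomes a legitimate alternative to the paper's case analysis, which instead avoids any mention of $Q_0$ by showing directly that the scalar equations $\delta(y_1)=\lambda y_1$, $\delta(y_2)=y_1+\lambda y_2$ (for a nontrivial Jordan block) or $\delta(y)=\lambda y$ (for $\lambda\notin\mathbb Q$) have no solutions in algebraic extensions.
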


Subsequently, in \S\ref{section-transcendental-extensions-splitting-dcsa}, over a more general derivation on $K = F(t)$ defined by $\delta_{c,m}(a) = 0$ for $a \in F$ and $\delta_{c,m}(t) = ct^m$, $c \in F \setminus\{0\}$, $m \in \mathbb Z$, we compute the minimum transcendence degree of a splitting field of $(M_n(K), \mathcal D_P)$, where $P \in M_n(F)$. We notice that the only case when finite splitting extensions exist for such derivations is $m = 1$.

Finally, in \S\ref{section-splitting-of-tensor-powers}, we relate the existence of finite splitting fields to the splitting of a tensor power of differential matrix algebras over the differential field $(K,\delta)$.
Smallest integer $r$ such that $(M_n(K), \mathcal D_P)^{\otimes r}$ is split is called the \emph{order}
of $(M_n(K), \mathcal D_P)$.

\begin{theoremalph}
Let $P \in M_n(F)$ be a traceless matrix.
If the order of $(M_n(K),\mathcal{D}_P)$ is finite, then
$(M_n(K),\mathcal{D}_P)$ admits a finite splitting field over $(K, \delta)$.
Further, the order is a divisor of $n$. 
(Theorem \ref{no-algebraic-then-infinite-order}, Corollary \ref{order-divides-degree})
\end{theoremalph}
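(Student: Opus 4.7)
The plan is to reduce both assertions to Theorem \ref{split-by-finite} (the constant-$P$ case of Theorem B) by studying how $P$ transforms under the tensor power construction. The main tool is the identification $(M_n(K),\mathcal D_P)^{\otimes r} \cong (M_{n^r}(K),\mathcal D_{P^{[r]}})$, where $P^{[r]} = \sum_{k=1}^{r} I_n^{\otimes (k-1)} \otimes P \otimes I_n^{\otimes (r-k)} \in M_{n^r}(F)$ is the Kronecker sum of $r$ copies of $P$. This is a routine unwinding of the tensor product of derivations, hinging on the identity $[P,X]\otimes Y + X\otimes [P,Y] = [P\otimes I + I\otimes P,\, X\otimes Y]$ at the $r = 2$ step and iterating. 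Since $P\in M_n(F)$ forces $P^{[r]}\in M_{n^r}(F)$, Theorem \ref{split-by-finite} is applicable to $P^{[r]}$.

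The next step is to verify that $P^{[r]}$ satisfies the hypotheses of Theorem \ref{split-by-finite}---diagonalizability over $\bar F$ together with rationality of all eigenvalues---if and only if $P$ does. The spectrum $\mathrm{Spec}(P^{[r]}) = \{\lambda_{i_1}+\cdots+\lambda_{i_r} : i_k \in \{1,\ldots,n\}\}$ contains $r\lambda_i$ (take all $i_k = i$), so rationality transfers both directions. For diagonalizability, write $P = S + N$ for its Jordan decomposition over $\bar F$ with $[S,N] = 0$: then $P^{[r]} = S^{[r]} + N^{[r]}$ is the Jordan decomposition of $P^{[r]}$ (commuting summands, $S^{[r]}$ semisimple, $N^{[r]}$ nilpotent as a sum of pairwise commuting nilpotents). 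By uniqueness of the Jordan decomposition, $P^{[r]}$ is diagonalizable iff $N^{[r]} = 0$. The implication $N^{[r]} = 0 \Rightarrow N = 0$ is the substantive one: for any $v$ with $Nv \ne 0$, the vectors $v$ and $Nv$ are linearly independent (a nilpotent endomorphism has no nonzero eigenvalue), so the $r$ summands of $N^{[r]}(v^{\otimes r}) = \sum_{k=1}^{r} v \otimes \cdots \otimes Nv \otimes \cdots \otimes v$ are distinct basis tensors of $(\bar F^n)^{\otimes r}$, making the sum nonzero.

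The first claim of the theorem now follows: if the order is some finite $r$, then $(M_{n^r}(K),\mathcal D_{P^{[r]}})$ is split and in particular admits the trivial finite splitting field, so by Theorem \ref{split-by-finite} applied to $P^{[r]}$ we learn that $P^{[r]}$ is diagonalizable with rational eigenvalues, hence so is $P$, and a second application of Theorem \ref{split-by-finite} produces a finite splitting field of $(M_n(K),\mathcal D_P)$. For the divisibility of the order by $n$, after conjugating by a constant matrix we may assume $P = \mathrm{diag}(\lambda_1,\ldots,\lambda_n)$ with $\lambda_i\in\mathbb Q$ and $\sum_i\lambda_i = 0$. The splitting criterion for the diagonal $P^{[r]}$---obtained as in the proof of Theorem \ref{split-by-finite} by exploiting that $g \in K^\times = F(t)^\times$ with constant logarithmic derivative forces $g = ct^m$ for $m \in \mathbb Z$---says $(M_{n^r}(K),\mathcal D_{P^{[r]}})$ is split iff all eigenvalues $\mu_I = \sum_k \lambda_{i_k}$ of $P^{[r]}$ are congruent modulo $\mathbb Z$; letting $\alpha \in \mathbb Q/\mathbb Z$ be the common residue of the $\lambda_i$, tracelessness $n\alpha \equiv 0 \pmod{\mathbb Z}$ forces the additive order of $\alpha$ to divide $n$, and matching this with the smallest $r$ witnessing the splitting criterion yields $r \mid n$.

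The main obstacle is the final step, where one must correctly identify the order of the differential algebra with the additive order of $\alpha$ in $\mathbb Q/\mathbb Z$ using both the logarithmic-derivative structure of $K^\times$ and the traceless hypothesis. The Kronecker-sum identification and the Jordan-decomposition transfer are standard linear-algebra exercises, and the transfer of rationality of eigenvalues is immediate.
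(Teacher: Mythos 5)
Your treatment of the first assertion is correct and follows essentially the paper's route: identify $(M_n(K),\mathcal D_P)^{\otimes r}$ with $(M_{n^r}(K),\mathcal D_{P^{[r]}})$ (the paper's Lemma \ref{tensor}/\ref{tensor_n}), transfer diagonalizability and rationality of eigenvalues between $P$ and $P^{[r]}$, and apply Theorem \ref{split-by-finite} twice. Your Jordan-decomposition argument ($N^{[r]}=0\Rightarrow N=0$ via the tensor $v^{\otimes r}$) and the observation that $r\lambda_i\in\mathrm{Spec}(P^{[r]})$ are perfectly good substitutes for the paper's rank count and traceless-summation trick in Lemma \ref{tensor_n}; they buy nothing essentially new but are clean.

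The divisibility step, however, rests on a false criterion. You assert that for the diagonal matrix $P^{[r]}$ the algebra $(M_{n^r}(K),\mathcal D_{P^{[r]}})$ is split over $K$ if and only if all eigenvalues $\mu_I=\sum_k\lambda_{i_k}$ are \emph{congruent modulo} $\mathbb Z$. The correct criterion (the paper's Corollary \ref{integer eigenvalues}, and exactly what your own remark ``$g=ct^m$ with $m\in\mathbb Z$'' proves) is that all eigenvalues lie \emph{in} $\mathbb Z$. These are not equivalent: for $P=\mathrm{diag}(1/2,-1/2)$ and $r=1$ (or $r=3$) all eigenvalues are congruent mod $\mathbb Z$, yet the algebra is not split, since $\delta(y)=\tfrac12 y$ has no solution in $F(t)^\times$. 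Worse, for $r\ge 2$ your congruence condition reduces to ``all $\lambda_i$ share a common residue $\alpha\in\mathbb Q/\mathbb Z$,'' which does not depend on $r$ at all, so ``the smallest $r$ witnessing the splitting criterion'' is degenerate and cannot be matched with the additive order of $\alpha$; as written the argument would make the order $1$ whenever the fractional parts agree. The fix is exactly the paper's Theorem \ref{m-fold_tensor}: the $r$-fold power is split iff the $\lambda_i$ have a common residue $\alpha$ \emph{and} $r\alpha\in\mathbb Z$, so a finite order equals the additive order of $\alpha$ in $\mathbb Q/\mathbb Z$; then tracelessness gives $n\alpha\equiv 0\pmod{\mathbb Z}$ and hence order $\mid n$, which is the content of Corollary \ref{order-divides-degree} (the paper phrases the same computation via $\gcd(m\{\lambda_1\},m)=1$ and $n\alpha_1+mc=0$). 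With that correction your outline closes, but as stated the key identification you yourself flag as the main obstacle is not established.
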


\noindent{\bfseries Acknowledgement.}~ We thank Varadharaj R. Srinivasan for helpful discussions and his interest in this work. We also thank B. Sury for valuable comments on a draft of this paper.

\section{Preliminaries}\label{section-preliminaries}
In this section, we recall relevant definitions and record some lemmas to be used subsequently in the paper.
\subsection{Basic definitions}
Throughout the paper, $K$ will denote a field of characteristic zero. A specific instance of $K$ will be explicitly mentioned, wherever needed.
Let $A$ be an associative algebra over $K$. An additive map $\mathcal D : A \to A$ is called a \emph{derivation} if 
$\mathcal D(ab)= \mathcal D(a)b+a\mathcal D(b)$. 
The field $K$ together with a derivation $\delta : K \to K$ is called a \emph{differential field}.
For a differential field $(K,\delta)$, the collection
$C_K=\{a\in K : \delta(a)=0\}$ forms a subfield of $K$ and is called the \emph{field of constants} of $(K,\delta)$.
A derivation of the form $\partial_u$ for some $u \in A$, where 
$\partial_u(x)=ux-xu$,  for all 
$x \in A$, is called an \emph{inner derivation} of $A$.
If $(A,\mathcal D)$ and $(K, \delta)$ are such that $\delta = \mathcal D|_K$ then $(A,\mathcal D)$ is called a \emph{differential algebra}
over $(K,\delta)$ and $\mathcal D$ is called an extension of $\delta$.
\\

If $E\supseteq K$ is a field extension with a derivation $\delta_E$ such that $\delta_E\big|_K=\delta$ then $(E,\delta_E)$ is called an \emph{extension of the differential field} $(K, \delta)$ or a \emph{differential extension} of $(K, \delta)$. A differential extension $(E,\delta_E)$ of $(K, \delta)$ is called \emph{algebraic} (respectively, \emph{transcendental} or \emph{finite}) if the field extension $E/K$ is so.
If $(K, \delta)$ is a differential field and $E/K$ is an algebraic extension of fields, then a derivation $\delta$ of $K$ extends uniquely to a derivation of $E$.	
A differential extension $(E, \delta_E)$ over $(K, \delta)$ is called a \emph{Picard-Vessiot extension} if
it does not admit new constants, \emph{i.e.}, $C_E = C_K$, and $E$ can be differentially generated by adjoining the solutions of a homogeneous linear ordinary differential equation to $K$. \\

If $A$ is a central simple $K$-algebra, then every derivation $\delta$ of $K$ extends to a derivation $\mathcal D$ of $A$.
(see \cite[Theorem A]{Amitsur}). The pair $(A,\mathcal{D})$, where $A$ is a central simple 
algebra over $K$ and $\mathcal{D}$ is a derivation on $A$ that extends a derivation $\delta$ of $K$, is called a 
\emph{differential central simple algebra} over $(K, \delta)$. 
If $(A_1, \mathcal{D}_1)$ and $(A_2, \mathcal{D}_2)$ are two differential central simple algebras over a differential field
$(K, \delta)$ then we define their tensor product by
$(A_1, \mathcal{D}_1) \otimes (A_2, \mathcal{D}_2) = (A_1 \otimes_K A_2, \mathcal{D}_1 \otimes \mathcal{D}_2) $,
where $ \mathcal{D}_1 \otimes \mathcal{D}_2(a_1\otimes a_2) =\mathcal{D}_1(a_1)\otimes a_2+a_1 \otimes \mathcal{D}_2(a_2).$\\

If $\mathcal D_1$ and $\mathcal D_2$ are two derivations on a central simple algebra $A$ which extend $\delta$, then
by an application of \emph{Skolem-Noether Theorem}
$\mathcal{D}_1-\mathcal{D}_2$ is inner (see \cite[Theorem 4.9]{Jacobson-II}), and hence there exists $u\in A$ such that $\mathcal{D}_1-\mathcal{D}_2=\partial_u$. 
Consequently, for a matrix algebra $M_n(K)$, every derivation $\mathcal{D}$ that extends $\delta$ is of the form $\mathcal{D}_P := \delta^c + \partial_P$, where
$P$ is a traceless matrix.\\

\subsection{Splitting of differential central simple algebras}
Let $(A, \mathcal D)$ be a differential 
central simple algebra of degree $n$ over $(K,\delta)$. If there exists a differential extension $(E, \delta_E)$ over $(K, \delta)$
such that $(A, \mathcal{D}) \otimes (E, \delta_E)$ is differentially isomorphic to $(M_n(E),\delta_E^c)$, where $\delta_E^c(B)=(\delta_E(b_{ij}))$ for every $B=(b_{ij})\in M_n(E)$, then 
$(A,\mathcal D)$ is said to be \emph{split by $(E, \delta_E)$} or \emph{trivialized by $(E,\delta_E)$}. 
If $(A, \mathcal{D})$ is isomorphic to $(M_n(K), \delta^c)$, 
then $(A, \mathcal{D})$ is called a split differential central simple algebra over $(K,\delta)$.

We now briefly recall the results of Juan and Magid \cite{Magid-Lourdes-DCSA}.
\begin{proposition}\label{isomorphism}\cite[Proposition 2]{Magid-Lourdes-DCSA}
Let $(K, \delta)$ be a differential field and $P, Q \in M_n(K)$ be traceless matrices. Differential matrix algebras 
$(M_n(K), \mathcal{D}_{P})$ and $(M_n(K), \mathcal{D}_{Q})$ 
are isomorphic if and only if there exists $H \in {\rm GL}_n(K)$ such that $H^{-1}\delta^c(H)+H^{-1}QH=P$. 
In particular, a differential matrix algebra $(M_n(K),\mathcal{D}_P)$ is split if and only if $P = H^{-1} \delta^c(H)$
for some $H\in {\rm GL}_n(K)$.
\end{proposition}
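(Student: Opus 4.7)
My plan is to reduce the differential isomorphism condition to a matrix equation via Skolem--Noether and the Leibniz rule. Since both algebras share the underlying $K$-algebra $M_n(K)$, any differential isomorphism $\varphi$ is in particular a $K$-algebra automorphism, so Skolem--Noether supplies $H \in \mathrm{GL}_n(K)$ with $\varphi(X) = HXH^{-1}$. The entire content of the proposition is then to translate the intertwining relation $\varphi \circ \mathcal{D}_P = \mathcal{D}_Q \circ \varphi$ into an explicit equation in $P$, $Q$, and $H$.

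To do this, I would expand both sides using $\mathcal{D}_P(X) = \delta^c(X) + [P,X]$ and $\mathcal{D}_Q(X) = \delta^c(X) + [Q,X]$, together with the product rule
\[
\delta^c(HXH^{-1}) = \delta^c(H)XH^{-1} + H\delta^c(X)H^{-1} - HXH^{-1}\delta^c(H)H^{-1},
\]
where the last term comes from $\delta^c(H^{-1}) = -H^{-1}\delta^c(H)H^{-1}$. The $\delta^c(X)$ contributions cancel on both sides; conjugating the remaining identity by $H$ reduces it to $[P, X] = [M, X]$ for every $X \in M_n(K)$, where $M := H^{-1}\delta^c(H) + H^{-1}QH$. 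Since $Z(M_n(K)) = K\cdot I$, this forces $P - M \in K \cdot I$. For the converse, given $H$ with $M = P$, one simply observes that $\varphi(X) := HXH^{-1}$ is already a $K$-algebra isomorphism and reverses the Leibniz computation to verify $\varphi\mathcal{D}_P = \mathcal{D}_Q\varphi$. The ``in particular'' claim then drops out by taking $Q = 0$, since splitness of $(M_n(K),\mathcal{D}_P)$ means differential isomorphism with $(M_n(K),\delta^c) = (M_n(K),\mathcal{D}_0)$.

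The subtle step I expect to be the main obstacle is the possible central scalar $\lambda I := P - M$. Since $\partial_{P + \lambda I} = \partial_P$, the derivation $\mathcal{D}_P$ depends on $P$ only modulo $K \cdot I$, and the tracelessness normalization merely picks one representative of that class; hence the equality ``$M = P$'' is the natural one modulo centers. Concretely, replacing $H$ by $fH$ for $f \in K^\ast$ leaves $H^{-1}QH$ unchanged and shifts $H^{-1}\delta^c(H)$ by $(\delta(f)/f)I$, so choosing $f$ with $\delta(f)/f = -\lambda$ absorbs the scalar; taking traces, this requirement is $\delta(f)/f = \delta(\det H)/(n\det H)$, which is satisfied whenever $\det H$ has an $n$-th root in $K$, and otherwise reflects precisely the harmless central indeterminacy of $P$. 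Apart from this bookkeeping with the center, the remainder is a mechanical expansion.
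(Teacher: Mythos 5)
Your reduction is the natural one (and the paper itself offers no proof to compare against --- it quotes this as \cite[Proposition 2]{Magid-Lourdes-DCSA}): Skolem--Noether gives $\varphi(X)=HXH^{-1}$, the Leibniz expansion is routine, and the ``if'' direction is fine. The problem is the last step of the ``only if'' direction. What your computation actually yields is $P-M=\lambda I$ with $M=H^{-1}\delta^c(H)+H^{-1}QH$, and taking traces (using $\mathrm{trace}(H^{-1}\delta^c(H))=\delta(\det H)/\det H$ and the tracelessness of $P,Q$) pins the scalar down to $\lambda=-\frac{1}{n}\,\delta(\det H)\det H^{-1}$. Killing it by replacing $H$ with $fH$ requires $f\in K^{\ast}$ with $\delta(f)f^{-1}=\frac{1}{n}\,\delta(\det H)\det H^{-1}$, i.e.\ the implementing matrix must be modifiable to one of constant determinant. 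You concede this needs something like an $n$-th root of $\det H$ in $K$, and then declare the leftover scalar ``harmless central indeterminacy.'' That is not a proof of the stated equality: the proposition asserts $P=H^{-1}\delta^c(H)+H^{-1}QH$ on the nose, and your argument only establishes it modulo $K\cdot I$.

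The discrepancy is not cosmetic, because the exact form is precisely what the paper uses downstream: Corollary \ref{trivialize} (a solution of $\delta^c(Y)=P^tY$ in ${\rm GL}_n(E)$, obtained by transposing $\delta^c(H)=HP$) and hence the eigenvalue criteria in Theorem \ref{split-by-finite} and Corollary \ref{integer eigenvalues} all depend on having no central correction term. And the obstruction is genuinely nontrivial: whether $\frac{1}{n}\,\delta(\det H)\det H^{-1}$ is a logarithmic derivative in $K$ is a real condition on the differential field --- for instance over $K=F(t)$ with $\delta(t)=t$ the equation $\delta(f)f^{-1}=\tfrac12$ has no solution in $K$, so for $H=\mathrm{diag}(t,1)$ the scalar produced by your argument cannot be absorbed by rescaling $H$, and one would have to produce a different implementing matrix altogether (or argue from the specific normalization used in \cite{Magid-Lourdes-DCSA}). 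So the step you wave off is exactly the content that separates the statement being proved from the weaker ``equal up to a central scalar'' statement, and your proposal as written does not supply it.
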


\begin{corollary}
If $\delta$ is the zero derivation on $K$, then $(M_n(K), \mathcal{D}_P)$ is isomorphic to $(M_n(K), \mathcal{D}_Q)$ if and only if 
$P$ and $Q$ are similar matrices.
\end{corollary}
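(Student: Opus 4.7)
The plan is to reduce the statement directly to Proposition \ref{isomorphism} and observe that the differential term in the isomorphism criterion vanishes under the hypothesis. Concretely, if $\delta$ is the zero derivation on $K$, then the entrywise extension $\delta^c$ to $M_n(K)$ is also zero, since $\delta^c(H) = (\delta(h_{ij}))$ for $H = (h_{ij})$. Hence $\delta^c(H) = 0$ for every $H \in {\rm GL}_n(K)$.

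Next, I would plug this into the criterion of Proposition \ref{isomorphism}. That proposition asserts that $(M_n(K), \mathcal{D}_P) \cong (M_n(K), \mathcal{D}_Q)$ if and only if there exists $H \in {\rm GL}_n(K)$ with
\[
H^{-1}\delta^c(H) + H^{-1} Q H = P.
\]
Under the vanishing $\delta^c(H) = 0$, this condition collapses to $H^{-1} Q H = P$, which is by definition the assertion that $P$ and $Q$ are similar matrices over $K$. Both directions (isomorphism implies similarity, and similarity implies isomorphism) follow at once from this equivalence.

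There is essentially no obstacle in this proof: the corollary is a clean specialization of Proposition \ref{isomorphism} in the degenerate case $\delta = 0$. It is worth noting, however, that the result is philosophically reassuring, since for the zero derivation the only derivations on $M_n(K)$ extending $\delta$ are the inner derivations $\partial_P$, and classification up to $\delta^c$-differential isomorphism should indeed coincide with the classical classification of inner derivations of $M_n(K)$ up to conjugation by ${\rm GL}_n(K)$, i.e., similarity of the implementing traceless matrices.
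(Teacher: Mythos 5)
Your proof is correct and is exactly the specialization of Proposition \ref{isomorphism} that the paper intends: with $\delta = 0$ the term $H^{-1}\delta^c(H)$ vanishes and the isomorphism criterion reduces to $H^{-1}QH = P$, i.e.\ similarity. This matches the paper's (implicit) argument, so nothing further is needed.
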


\begin{corollary}\label{trivialize}
Let $(E, \delta_E)$ be a differential extension of $(K, \delta)$. Then $(M_n(K), \mathcal{D}_P)$ is split by $(E, \delta_E)$ if and only if ${\rm GL}_n(E)$ contains a solution of the equation $\delta^c(Y)=P^t Y$.
\end{corollary}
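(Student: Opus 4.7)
The plan is to reduce the statement to Proposition \ref{isomorphism} applied over the extension $(E, \delta_E)$, followed by a simple transpose trick. The corollary asserts that $(M_n(K),\mathcal D_P)$ is split by $(E,\delta_E)$ precisely when $\delta^c(Y) = P^tY$ has an invertible solution in $M_n(E)$, so the substance is to convert the splitting condition to a matrix equation and then rearrange it into the form stated.

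First I would compute the base change $(M_n(K),\mathcal D_P) \otimes (E,\delta_E)$. Using the defining formula for the tensor product of derivations together with $\mathcal D_P(e_{ij}) = Pe_{ij} - e_{ij}P$ on matrix units (since $\delta^c(e_{ij}) = 0$), a direct expansion of an arbitrary $X = \sum e_{ij}\otimes x_{ij} \in M_n(E)$ shows that the induced derivation on $M_n(E)$ is $X \mapsto \delta_E^c(X) + PX - XP$, that is, the differential matrix algebra $(M_n(E), \mathcal D_P)$ with the same traceless matrix $P$ but now with the extended derivation $\delta_E^c$. This is the step I expect to be the most technical, though it is still routine; the key point to get right is that base change of a differential algebra preserves the shape $\delta^c + \partial_P$.

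Having identified the base change, the splitting condition becomes: $(M_n(E), \mathcal D_P)$ is differentially isomorphic to $(M_n(E), \delta_E^c)$. By the second assertion of Proposition \ref{isomorphism} applied to the differential field $(E, \delta_E)$, this is equivalent to the existence of $H \in \mathrm{GL}_n(E)$ satisfying $P = H^{-1}\delta_E^c(H)$, i.e., $\delta_E^c(H) = HP$.

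Finally, I would take transposes: since transposition is an $E$-linear involution commuting with the entrywise derivation $\delta_E^c$, the equation $\delta_E^c(H) = HP$ is equivalent to $\delta_E^c(H^t) = P^t H^t$. Setting $Y := H^t$ and noting that $H \mapsto H^t$ is a bijection $\mathrm{GL}_n(E) \to \mathrm{GL}_n(E)$ yields the stated equivalence in both directions. This gives the corollary with no further work.
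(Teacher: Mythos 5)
Your proof is correct and is the natural argument that the paper leaves implicit (the corollary is stated without proof, as an immediate consequence of Proposition \ref{isomorphism}). The base-change computation verifying that $(M_n(K),\mathcal D_P)\otimes(E,\delta_E)\cong(M_n(E),\delta_E^c+\partial_P)$, the appeal to the second part of Proposition \ref{isomorphism} over $(E,\delta_E)$, and the transpose of $\delta_E^c(H)=HP$ into $\delta_E^c(H^t)=P^tH^t$ are all exactly right.
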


Later in this paper, we will frequently use the above corollary in conjunction with the following lemma.

\begin{lemma}\label{similar-matrices}
Let $(K, \delta)$ be a differential field, and $C_K$ be its field of constants. 
Let $A,B\in M_n(K)$, and $Q\in M_n(C_K)$ be such that $A=Q^{-1}BQ$. Let $(E, \delta_E) \supseteq (K, \delta)$ be an extension of
differential fields.
Then, $\delta^c(Y)=A^tY$ has a solution for $Y$ in ${\rm GL}_n(E)$ if and only if $\delta^c(Y)=B^tY$ has a solution for $Y$ in ${\rm GL}_n(E)$.
\end{lemma}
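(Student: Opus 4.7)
The plan is to exploit the fact that $Q$ has entries in $C_K$, so $\delta_E^c$ annihilates $Q$, $Q^{-1}$, and their transposes, and therefore $\delta_E^c$ behaves like a derivation of a module when multiplied by these constant matrices. Concretely, for any $M \in M_n(E)$ and any $N \in M_n(C_K)$, one has $\delta_E^c(NM) = N\delta_E^c(M)$ and $\delta_E^c(MN) = \delta_E^c(M)N$, because $C_K \subseteq C_E$.

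First I would transpose the similarity relation. From $A = Q^{-1} B Q$ one gets $A^t = Q^t B^t (Q^{-1})^t = R^{-1} B^t R$, where $R := (Q^{-1})^t \in \mathrm{GL}_n(C_K)$ (and in particular $R \in \mathrm{GL}_n(C_E)$). This reduces the lemma to the statement that the fundamental solvability of $\delta_E^c(Y) = A^t Y$ is preserved under conjugating $A^t$ by a constant invertible matrix.

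Next I would perform the change of variable. Given $Y \in \mathrm{GL}_n(E)$ with $\delta_E^c(Y) = A^t Y$, set $Z = RY$. Then $Z \in \mathrm{GL}_n(E)$, and using the observation from the first paragraph together with $A^t = R^{-1} B^t R$,
\begin{equation*}
\delta_E^c(Z) \;=\; R\,\delta_E^c(Y) \;=\; R A^t Y \;=\; R (R^{-1} B^t R) Y \;=\; B^t (RY) \;=\; B^t Z,
\end{equation*}
which gives a solution of $\delta_E^c(Z) = B^t Z$ in $\mathrm{GL}_n(E)$. The converse direction is identical after swapping $A \leftrightarrow B$ and replacing $R$ by $R^{-1}$.

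There is really no obstacle here beyond careful transpose bookkeeping; the essential input is only that the entries of $Q$ are constants, so that $R$ commutes through $\delta_E^c$. I would present the argument in a single short paragraph after recording the identity $\delta_E^c(RY) = R\,\delta_E^c(Y)$ as a one-line observation.
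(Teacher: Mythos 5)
Your argument is correct and is essentially the paper's proof: both rest on the observation that matrices over $C_K$ pass through $\delta_E^c$, transpose the similarity $A=Q^{-1}BQ$, and transform a given solution by a constant matrix. The only (immaterial) difference is that you take $Z=(Q^{-1})^tY$, while the paper uses $(Q^{-1})^tYQ^t$; the extra right factor $Q^t$ is harmless since right multiplication by a constant invertible matrix preserves solutions.
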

\begin{proof}
Suppose that $\delta^c(Y)=A^tY$ has a solution $Z \in {\rm GL}_n(E)$. Then
$$\delta_E^c(Z) = A^tZ = Q^tB^t(Q^{-1})^tZ = Q^tB^t(Q^{-1})^tZ Q^t (Q^{-1})^t.$$
Let $Z_1=(Q^{-1})^t Z Q^t$. Since $\delta^c(Q)=0$, we have
$$\delta_E^c(Z_1)=\delta_E^c((Q^{-1})^t Z Q^t)=(Q^{-1})^t \delta_E^c(Z) Q^t = B^t(Q^{-1})^t Z Q^t = B^tZ_1.$$
Thus a solution of $\delta^c(Y)=B^tY$ is $Z_1=(Q^{-1})^tZQ^t\in {\rm GL}_n(E)$. 
Converse holds because the hypothesis of the lemma is symmetric in $A$ and $B$.
\end{proof}

\subsection{Order of $(A,\mathcal D)$}
A differential central simple algebra $(A, \mathcal D)$ over $(K, \delta)$ is said to be of \emph{finite order} if there is an integer $m \geq 1$ such that
$(A, \mathcal D)^{\otimes m} := \underbrace{(A, \mathcal D) \otimes (A, \mathcal D)\otimes \dots \otimes (A, \mathcal D)}_{m \,\, times}$ is 
split over $(K, \delta)$ itself. If $(A, \mathcal D)$ is of finite order, then the smallest
$m$ for which $(A, \mathcal D)^{\otimes m}$ is split is called the \emph{order of $(A, \mathcal D)$}.

The lemmas recorded in this subsection would be referred to while proving results concerning orders of differential
matrix algebras.
\begin{lemma}\label{tensor}
Let $(A_1,\mathcal D_{P_1})$ and $(A_2,\mathcal D_{P_2})$ be two differential matrix algebras over $(K, \delta)$. 
Then $\mathcal{D}_{P_1} \otimes \mathcal{D}_{P_2}=\mathcal{D}_{P_1 \otimes 1 + 1 \otimes P_2}$, 
where $P_1\in A_1, P_2 \in A_2$ respectively.
\end{lemma}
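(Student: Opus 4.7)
The plan is to verify the claimed identity by evaluating both sides on an elementary tensor $a_1 \otimes a_2 \in A_1 \otimes_K A_2$ and then extending by $K$-linearity. Write $\mathcal D_{P_i} = \delta^c + \partial_{P_i}$ for $i = 1, 2$. By the very definition of the tensor product derivation recalled in the preliminaries,
\[
(\mathcal D_{P_1}\otimes \mathcal D_{P_2})(a_1\otimes a_2)
= \bigl(\delta^c(a_1) + P_1 a_1 - a_1 P_1\bigr) \otimes a_2
+ a_1\otimes \bigl(\delta^c(a_2) + P_2 a_2 - a_2 P_2\bigr).
\]
I would split the right-hand side into a ``componentwise-derivation'' piece and a ``commutator'' piece.

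For the componentwise piece, I would first justify the identity
\[
\delta^c(a_1\otimes a_2) \;=\; \delta^c(a_1)\otimes a_2 + a_1 \otimes \delta^c(a_2),
\]
where the $\delta^c$ on the left denotes the entrywise derivation on the matrix algebra $A_1\otimes_K A_2 \cong M_{n_1 n_2}(K)$ under the Kronecker-product identification. This is an entry-by-entry check: an entry of the Kronecker product is a product of one entry from $a_1$ and one entry from $a_2$, so the Leibniz rule for $\delta$ on $K$ immediately yields the stated formula. This is the only even slightly delicate step; everything else is formal.

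For the commutator piece, using that $(P_1\otimes 1)(a_1\otimes a_2) = P_1 a_1 \otimes a_2$, $(a_1\otimes a_2)(P_1\otimes 1) = a_1 P_1\otimes a_2$, and similarly for $1\otimes P_2$, the four bracket terms reassemble to
\[
\bigl(P_1\otimes 1 + 1\otimes P_2\bigr)(a_1\otimes a_2) - (a_1\otimes a_2)\bigl(P_1\otimes 1 + 1\otimes P_2\bigr)
= \partial_{P_1\otimes 1 + 1\otimes P_2}(a_1\otimes a_2).
\]

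Combining the two pieces gives $(\mathcal D_{P_1}\otimes\mathcal D_{P_2})(a_1\otimes a_2) = \delta^c(a_1\otimes a_2) + \partial_{P_1\otimes 1 + 1\otimes P_2}(a_1\otimes a_2) = \mathcal D_{P_1\otimes 1 + 1\otimes P_2}(a_1\otimes a_2)$, and since both sides are $K$-linear derivations that agree on elementary tensors, they agree everywhere. Note that $P_1\otimes 1 + 1\otimes P_2$ is traceless in $A_1\otimes A_2 \cong M_{n_1 n_2}(K)$ because $\operatorname{trace}(P_1\otimes 1) = n_2\operatorname{trace}(P_1) = 0$ and similarly for the other summand, so the right-hand side is a bona fide derivation of the form considered in the paper.
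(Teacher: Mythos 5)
Your proposal is correct and follows essentially the same computation as the paper: expand the tensor-product derivation on an elementary tensor and regroup into the entrywise-derivation term plus the commutator with $P_1\otimes 1 + 1\otimes P_2$. The only difference is that you make explicit two points the paper leaves implicit (the entry-by-entry Leibniz check that $\delta^c(X_1\otimes X_2)=\delta^c(X_1)\otimes X_2 + X_1\otimes\delta^c(X_2)$ under the Kronecker identification, and the tracelessness of $P_1\otimes 1 + 1\otimes P_2$), which is fine but not a different argument.
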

\begin{proof}
For $X_1 \in A_1$ and $X_2 \in A_2$,
\begin{align*}
(\mathcal{D}_{P_1} \otimes \mathcal{D}_{P_2})(X_1 \otimes X_2)&=(\mathcal{D}_{P_1} \otimes 1+ 1 \otimes \mathcal{D}_{P_2})(X_1 \otimes X_2)
\\&=(\mathcal{D}_{P_1} \otimes 1)(X_1 \otimes X_2) + (1 \otimes \mathcal{D}_{P_2}) (X_1 \otimes X_2) 
\\&=(\delta^c(X_1)+P_1X_1-X_1P_1)\otimes X_2+ X_1 \otimes (\delta^c(X_2)+P_2X_2-X_2P_2) \\
&=\delta^c(X_1) \otimes X_2 + X_1 \otimes \delta^c(X_2) + P_1X_1 \otimes X_2 + X_1 \otimes P_2X_2 \\&\quad- X_1P_1 \otimes X_2 -X_1 \otimes X_2P_2
\\&= \delta^c(X_1 \otimes X_2) + (P_1 \otimes 1 + 1 \otimes P_2)(X_1 \otimes X_2)\\&\quad- (X_1 \otimes X_2)(P_1 \otimes 1 + 1 \otimes P_2)
\\&= \mathcal{D}_{P_1 \otimes 1 + 1 \otimes P_2}(X_1 \otimes X_2).
\end{align*}
\end{proof}

If $\mathcal D_Q = \mathcal D_{P_1} \otimes \mathcal D_{P_2}$ then 
$Q=P_1 \otimes 1 + 1 \otimes P_2$, where $\otimes$ is the Kronecker product of matrices. 
By \cite[Theorem 4.4.5]{Topics-in-matrix-analysis} eigenvalues of $Q$ are the sums of the eigenvalues of $P_1$ and $P_2$, 
\emph{i.e.},
if $\Lambda_1$ and $\Lambda_2$ denote the sets of eigenvalues of $P_1$ and $P_2$, respectively, then
the set of eigenvalues of $Q$ is $\Lambda=\{\lambda_1 + \lambda_2 : \lambda_1 \in \Lambda_1, \lambda_2 \in \Lambda_2\}.$

\begin{lemma}\label{tensor_n}
Let $(A_i,\mathcal D_{P_i})$, $i\in \{1,2,\dots,n\}$ be differential matrix algebras over $(K, \delta)$. 
Let $$(A,\mathcal{D}_Q)=(A_1, \mathcal D_{P_1}) \otimes (A_2, \mathcal D_{P_2})\otimes \dots \otimes (A_n, \mathcal D_{P_n}).$$ 
For each $i\in \{1,2,\dots,n\}$, let $\Lambda_i$ be the set of eigenvalues of $P_i$ and
$\Lambda$ be the set of eigenvalues of $Q$. Then

\begin{enumerate}[(i)]
\item $\Lambda=\{\lambda_1+\lambda_2+\dots+\lambda_n : \lambda_i \in \Lambda_i\}$.
In particular, the matrix $Q$ has all its eigenvalues in $\mathbb{{Q}}$ if and only if each $P_i$ has all its eigenvalues in $\mathbb{{Q}}$.
\item $Q$ is diagonalizable if and only if each $P_i$ is diagonalizable.
\end{enumerate}
\end{lemma}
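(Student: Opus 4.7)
The plan is to induct on $n$, using Lemma \ref{tensor} together with the associativity of $\otimes$ to reduce both parts to the two-factor case. Iterating Lemma \ref{tensor} yields
\[
Q \;=\; P_1 \otimes I \,+\, I \otimes Q_{n-1},
\]
where $Q_{n-1}$ is the matrix built analogously from $P_2,\dots,P_n$. This identity is the workhorse for both (i) and (ii).

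For (i), the base case $n=2$ is precisely the cited Horn--Johnson theorem, which identifies the spectrum of a Kronecker sum with the set of pairwise sums. Inductively, the spectrum of $Q_{n-1}$ equals $\{\lambda_2 + \cdots + \lambda_n : \lambda_j \in \Lambda_j\}$, and applying the base case to $P_1$ and $Q_{n-1}$ gives the displayed formula for $\Lambda$. The forward direction of the ``in particular'' claim is then immediate. For the converse, I would use the tracelessness of each $P_i$: fix $i$, pick one eigenvalue $\mu_j \in \Lambda_j$ for each $j \neq i$, and set $c = \sum_{j \neq i} \mu_j$. The formula says $\lambda + c \in \mathbb{Q}$ for every $\lambda \in \Lambda_i$. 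Tracelessness of $P_i$ gives $\sum_\lambda m_\lambda \lambda = 0$, where $m_\lambda$ are the algebraic multiplicities; summing the rational numbers $\lambda + c$ weighted by $m_\lambda$ then forces $n_i c \in \mathbb{Q}$ with $n_i = \sum_\lambda m_\lambda$ the size of $P_i$, hence $c \in \mathbb{Q}$, and therefore $\Lambda_i \subseteq \mathbb{Q}$.

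For (ii), set $A = P_1 \otimes I$ and $B = I \otimes Q_{n-1}$, so that $A$ and $B$ commute and $Q = A+B$. For the ``if'' direction, if every $P_i$ is diagonalizable then induction makes $Q_{n-1}$ diagonalizable; $A$ and $B$ are then commuting diagonalizable matrices, hence simultaneously diagonalizable, so $Q$ is diagonalizable. For the ``only if'' direction, suppose $P_1$ fails to be diagonalizable over $\bar K$. By Jordan form there exist $v,w \in \bar K^{\,n_1}$ with $(P_1 - \alpha I)v = w \neq 0$ and $(P_1 - \alpha I)w = 0$. Picking any eigenvector $u$ of $Q_{n-1}$ over $\bar K$, with $Q_{n-1} u = \beta u$, a direct computation gives $(Q - (\alpha+\beta)I)(v \otimes u) = w \otimes u \neq 0$ and $(Q - (\alpha+\beta)I)^2(v \otimes u) = 0$, producing a Jordan block of size $\geq 2$ in $Q$ and contradicting diagonalizability. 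Swapping the roles of $A$ and $B$ forces $Q_{n-1}$ to be diagonalizable as well, and induction concludes.

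The main obstacle is the ``only if'' half of (ii): one must produce a genuine non-trivial Jordan block in $Q$ out of a Jordan pair for $P_1$, and it is important to tensor against a true eigenvector of $Q_{n-1}$ (which exists over the algebraic closure). The converse in the ``in particular'' part of (i) is the other place where the hypothesis is subtle: without tracelessness, the example $(P_1,P_2) = (\alpha I,-\alpha I)$ with $\alpha$ irrational gives $Q = 0$ with rational spectrum but irrational spectra for the $P_i$, so the traceless assumption is truly needed for that implication.
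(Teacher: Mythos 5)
Your proof is correct, and its skeleton matches the paper's: reduce to the two--factor case by associativity and induction, quote the Horn--Johnson result on spectra of Kronecker sums for (i), and use tracelessness for the rational-eigenvalue converse. The differences are local but worth noting. In the converse of (i) the paper fixes $\lambda\in\Lambda_i$ and sums the relations over all choices of $\lambda_j\in\Lambda_j$, invoking tracelessness of the \emph{other} factors $P_j$ (summing over the sets $\Lambda_j$, which strictly speaking needs multiplicities); you instead fix the $\mu_j$ and average over $\Lambda_i$ with algebraic multiplicities, using tracelessness of $P_i$ itself -- this is slightly cleaner since the trace really is the multiplicity-weighted sum, and your example $(\alpha I,-\alpha I)$ correctly pinpoints why tracelessness is indispensable. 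In the ``only if'' half of (ii) the paper conjugates to $Q'=J_{P_1}\otimes 1+1\otimes J_{P_2}$ and compares algebraic and geometric multiplicities of $\alpha+\beta$ via a rank estimate on $Q'-(\alpha+\beta)I$; you instead exhibit the explicit Jordan chain $(Q-(\alpha+\beta)I)(v\otimes u)=w\otimes u\neq 0$, $(Q-(\alpha+\beta)I)^2(v\otimes u)=0$, which contradicts diagonalizability without any rank bookkeeping -- a more direct and, to my mind, more robust argument. Your ``if'' direction via simultaneous diagonalization of the commuting matrices $P_1\otimes I$ and $I\otimes Q_{n-1}$ is an acceptable substitute for the paper's explicit conjugation by $R_1\otimes R_2$. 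Both routes buy the same lemma; yours trades the paper's concrete matrix manipulations for standard structural facts about Jordan chains and commuting diagonalizable operators.
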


\begin{proof}
$(i)$.~ Since the Kronecker product of matrices is associative (see \cite[Property 4.2.6]{Topics-in-matrix-analysis}),
 $\Lambda=\{\lambda_1+\lambda_2+\dots+\lambda_n : \lambda_i \in \Lambda_i\}$ holds inductively, with 
$n = 2$ as the base case that is discussed immediately before this lemma. 
Therefore, if each $\Lambda_i \subset \mathbb Q$ then $\Lambda \subset \mathbb Q$.
Conversely, suppose that $\Lambda \subset \mathbb Q$.
Then for a given $\lambda \in \Lambda_i$ and arbitrary $\lambda_j \in \Lambda_j$ we have
\begin{align} \label{one-eigen-value}
\lambda + \sum_{j \neq i}\lambda_{j} \in \mathbb Q.
\end{align}

Since each $P_j$ is a traceless matrix, $\sum_{\lambda_j \in \Lambda_j} {\lambda_j} = 0$. Using this after adding up all instances
of (\ref{one-eigen-value}) as $\lambda_j$ varies over $\Lambda_j$, we have 
$\left(\prod_{j \neq i}|\Lambda_j|\right) \lambda \in \mathbb Q$, where $|\Lambda_j|$ is the
cardinality of the set $\Lambda_j$. Clearly, $\lambda \in \mathbb Q$. \\

$(ii)$.~ As the tensor product
$(A_1, \mathcal D_{P_1}) \otimes (A_2, \mathcal D_{P_2})\otimes \dots \otimes (A_n, \mathcal D_{P_n})$
can be constructed repeatedly by taking one tensor factor each time,
it suffices to demonstrate the proof for $n = 2$ case. \\

Suppose $n = 2$. By Lemma \ref{tensor}, $Q = P_1\otimes 1+1\otimes P_2$. Let $J_{P_1}$ and $J_{P_2}$ be Jordan forms of 
$P_1$ and $P_2$, respectively. Let $R_1$ and $R_2$ be such that $R_1^{-1}P_1R_1 = J_{P_1}$ and $R_2^{-1}P_2R_2 = J_{P_2}$. Note that
$R_1 \otimes R_2$ is invertible and $Q=(R_1 \otimes R_2)(J_{P_1}\otimes 1+1\otimes J_{P_2})(R_1 \otimes R_2)^{-1}$.
Thus if $P_1$ and $P_2$ are diagonalizable, the Jordan forms $J_{P_1}$ and $J_{P_2}$ are diagonal matrices, and so is 
$J_{P_1}\otimes 1+1\otimes J_{P_2}$. In other words, $Q$ is diagonalizable. \\

Conversely, if at least one of the matrices, say $P_2$, is not diagonalizable. Then there exists an eigenvalue $\beta$ of $P_2$ whose algebraic multiplicity is strictly greater than its geometric multiplicity. We may assume, by permuting Jordan blocks of $J_{P_2}$ if needed, that the first Jordan block of $J_{P_2}$ corresponds to $\beta$. Let
$\alpha$ be the eigenvalue corresponding to the first Jordan block of $J_{P_1}$. We claim that the algebraic multiplicity of the eigenvalue
$\alpha + \beta$ of $Q := P_1 \otimes 1 + 1 \otimes P_2$ is strictly greater than its geometric multiplicity. To see this, we look at the conjugate
$Q' := J_{P_1} \otimes 1 + 1 \otimes J_{P_2} = (a_{ij})_{r \times r}$
of $Q$ and observe that $Q'$ is an upper triangular matrix with $a_{11} = a_{22} = \alpha + \beta$, $a_{12} = 1$. Let $s$ denote the algebraic multiplicity of $\alpha + \beta$. To estimate its geometric multiplicity, we find a lower bound on the rank of $Q' - (\alpha+ \beta)I_r$, where $I_r$ is the $r \times r$ identity matrix.
In $Q' - (\alpha+ \beta)I_r$ the first row and $r-s$ rows corresponding to eigenvalues different from $\alpha + \beta$ constitute a linearly independent set of size $r-s+1$. Thus, ${\rm rank}(Q' - (\alpha+ \beta)I_r) \geq r-s +1$.
By rank-nullity theorem, the geometric multiplicity of $\alpha + \beta$ is at most $r-(r-s+1) = s-1 < s$. Thus $Q$ is not  diagonalizable.
\end{proof}

As a consequence, we derive the following lemma that will be used later while dealing with tensor powers of differential matrix algebras.

\begin{lemma}\label{m_fold}
Let $(K, \delta)$ be a differential field and $P \in M_n(K)$ be a traceless matrix. Let $Q$ be a traceless matrix such that
$(M_{n^m}(K),\mathcal{D}_Q)=(M_n(K), \mathcal D_P)^{\otimes m}$. 
Then the following hold for $Q$.
\begin{enumerate}[(i)]
\item $Q$ is diagonalizable if and only if $P$ is diagonalizable.
\item $Q$ has all its eigenvalues in $\mathbb{{Q}}$ if and only if $P$ has all its eigenvalues in $\mathbb{{Q}}$.
\end{enumerate}
\end{lemma}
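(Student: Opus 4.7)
The plan is to recognize that Lemma \ref{m_fold} is essentially an immediate specialization of Lemma \ref{tensor_n} to the case where all tensor factors are equal, combined with the uniqueness of the traceless matrix that represents a given derivation on a matrix algebra.

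First I would unpack the definition of $Q$. Iterating Lemma \ref{tensor} shows that
\[
\mathcal{D}_P \otimes \mathcal{D}_P \otimes \cdots \otimes \mathcal{D}_P \;=\; \mathcal{D}_{\widetilde{Q}},
\]
where
\[
\widetilde{Q} \;=\; \sum_{i=1}^{m} I^{\otimes (i-1)} \otimes P \otimes I^{\otimes (m-i)} \;\in\; M_{n^m}(K).
\]
Since $P$ is traceless, so is $\widetilde{Q}$. Two traceless matrices inducing the same derivation (via $\delta^c + \partial_{(\cdot)}$) must differ by a scalar matrix whose trace is zero, hence must be equal; therefore $Q = \widetilde{Q}$. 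This step just has to be stated cleanly — it is the only place where one has to be careful, since \emph{a priori} the $Q$ in the statement is only well-defined modulo scalars.

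Next I would apply Lemma \ref{tensor_n} with $P_1 = P_2 = \cdots = P_m = P$ and $Q$ as above. Part (i) of that lemma tells us that $Q$ is diagonalizable if and only if each $P_i = P$ is diagonalizable; this is precisely statement (i) of Lemma \ref{m_fold}. Part (ii) of that lemma tells us that the set of eigenvalues of $Q$ is
\[
\Lambda \;=\; \{\lambda_1 + \lambda_2 + \cdots + \lambda_m : \lambda_i \in \Lambda_P\},
\]
where $\Lambda_P$ is the set of eigenvalues of $P$, and that $\Lambda \subset \mathbb{Q}$ if and only if $\Lambda_P \subset \mathbb{Q}$; this gives statement (ii).

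I do not expect a genuine obstacle here: the entire content of the lemma has been established in Lemma \ref{tensor_n}, and the present lemma is simply the diagonal specialization. The only point requiring mild care, as noted above, is to argue that the tracelessness assumption pins down $Q$ unambiguously, so that properties proved for $\widetilde{Q}$ are properties of $Q$.
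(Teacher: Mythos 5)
Your proof is correct and takes essentially the same route as the paper, which states Lemma \ref{m_fold} as an immediate consequence of Lemma \ref{tensor_n} with all tensor factors equal to $(M_n(K),\mathcal D_P)$; your additional remark that tracelessness pins down $Q$ uniquely (two traceless matrices inducing the same derivation differ by a scalar of trace zero, hence coincide) is a reasonable point of care that the paper leaves implicit. The only slip is cosmetic: you swapped the part labels of Lemma \ref{tensor_n} (there, part (i) is the eigenvalue/rationality statement and part (ii) is diagonalizability).
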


\section{Splitting fields of $S \subseteq K\backslash\{0\}$} \label{splitting-fields-of-S}
Let $(K, \delta)$ be a differential field.
Consider the set $$\mathcal A = \{a \in K : \delta(y) = nay \text{ has a nonzero solution in } K \text{ for some } n \in \mathbb N\}.$$ 
Then $\mathcal A$ is an additive subgroup of $K$ that is closed under division by natural numbers.
Let $a \in \mathcal A$. Let $m,n\in \mathbb N$ and
$u,v \in K$ be such that $\delta(u)=mau$ and $\delta(v)=na v$. Then $\delta(uv^{-1})=(m-n)a uv^{-1}$. 
Thus, for each $a \in \mathcal A$, there exists a unique positive integer $n_a$ with the following property: $\delta(y)=nay$ has a solution in $K\backslash\{0\}$ if and only if $n$ is an integral multiple of $n_a$. 

An element $a \in K$ is called a \emph{logarithmic derivative} if $a = \delta(b)b^{-1}$ for some $b \in K \backslash\{0\}$. It is evident that if $a$ is a logarithmic derivative, then $a \in \mathcal A$. We denote the set of logarithmic derivatives by
$\mathcal L_{\delta}(K)$.
The following lemma provides a description of $\mathcal A$ in terms of $\mathcal L_{\delta}(K)$.

\begin{lemma}
The subgroup $\mathcal A$ of $K$ is equal to the $\mathbb Q$-span of $\mathcal L_{\delta}(K)$.
\end{lemma}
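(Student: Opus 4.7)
The plan is to prove the two inclusions separately, both of which are essentially manipulations of the defining equations.

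For the inclusion $\mathcal{L}_\delta(K) \subseteq \mathcal{A}$ (and hence $\mathbb Q\text{-span}(\mathcal{L}_\delta(K)) \subseteq \mathcal A$ once we establish $\mathcal A$ is a $\mathbb Q$-vector space), I would first note that if $a = \delta(b)b^{-1}$ then $\delta(b) = 1 \cdot a \cdot b$, so $a \in \mathcal{A}$ with $n=1$. Next, to show $\mathcal{A}$ is a $\mathbb{Q}$-vector space (and not just an additive group closed under division by positive integers, which is already recorded in the paper), I would pick $a \in \mathcal A$ and observe that $\delta(y) = nay$ for some $y \neq 0$ implies $\delta(y^{-1}) = -n a y^{-1}$, so $-a \in \mathcal A$. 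Combined with the additive group structure and the closure under division by naturals, this gives a $\mathbb Q$-module structure. Hence any $\mathbb Q$-linear combination of elements in $\mathcal L_\delta(K)$ lies in $\mathcal A$.

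For the reverse inclusion $\mathcal{A} \subseteq \mathbb Q\text{-span}(\mathcal{L}_\delta(K))$, I would take $a \in \mathcal{A}$ together with witnesses $n \in \mathbb N$, $y \in K \setminus \{0\}$ with $\delta(y) = n a y$. Dividing by $y$ gives $na = \delta(y) y^{-1} \in \mathcal L_\delta(K)$, and therefore $a = \tfrac{1}{n}\, \delta(y) y^{-1}$ lies in the $\mathbb Q$-span of $\mathcal L_\delta(K)$.

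There is no real obstacle here; the only thing worth being slightly careful about is clearing denominators when assembling a $\mathbb Q$-linear combination into a single logarithmic derivative. Concretely, if $a = \sum_{i=1}^k \frac{m_i}{N}\, \delta(b_i) b_i^{-1}$ with $m_i \in \mathbb Z$ and a common denominator $N \in \mathbb N$, then setting $y = \prod_{i=1}^k b_i^{m_i}$ yields $\delta(y) y^{-1} = \sum m_i \delta(b_i) b_i^{-1} = N a$, giving a clean certificate that $a \in \mathcal A$. This bookkeeping, together with the two short computations above, completes the proof.
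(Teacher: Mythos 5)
Your proof is correct and follows essentially the same route as the paper: one inclusion via $a = \tfrac{1}{n}\,\delta(y)y^{-1} \in \mathrm{span}_{\mathbb Q}(\mathcal L_{\delta}(K))$, the other from $\mathcal L_{\delta}(K) \subseteq \mathcal A$ together with the $\mathbb Q$-closure of $\mathcal A$ (which the paper has already recorded, and which you simply verify a bit more explicitly). Your final denominator-clearing computation with $y = \prod_i b_i^{m_i}$ is a harmless extra certificate, not a different argument.
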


\begin{proof}
Since $\mathcal L_{\delta}(K)$ is a subset of $\mathcal A$ and $\mathcal A$ is an additive subgroup of $K$ which is closed under division by natural numbers,
${\rm span}_{\mathbb Q}(\mathcal L_{\delta}(K)) \subseteq \mathcal A$.

Conversely, let $a\in \mathcal A$. Then there exists
$v_a \in K \setminus \{0\}$ and $n_a \in \mathbb N$ such that $\delta(v_a)=n_a av_a$. 
Hence $a = (1/n_a)\delta(v_a)v_a^{-1} \in {\rm span}_{\mathbb Q}(\mathcal L_{\delta}(K))$.
\end{proof}

Let $S \subseteq K \backslash\{0\}$.
We are interested in extensions $(E, \delta_E)$ of $(K, \delta)$ which contain a solution of $\delta(y) = a y$ for every $a \in S$. Such extensions are to be called {\it splitting fields} of $S$. Let $a, b \in S$ and $(E, \delta_E)$ be an extension of $(K, \delta)$ such that
$\delta_E(u_a) = a u_a$ and $\delta_E(u_b) = b u_b$ for some $u_a, u_b \in E$. Then $u_a u_b^{\pm1} \in E$ is a solution of $\delta(y) = (a \pm b)y$. From this, we derive the following lemma.
In the lemma, and thereafter, $\langle S \rangle$ denotes the additive subgroup of $K$ generated by $S$.

\begin{lemma}\label{splitting-field-same}
Let $(E, \delta_E)$ be a differential extension of $(K, \delta)$. Then $(E, \delta_E)$ is a splitting field of $S$ if and only if it is a splitting field of $\langle S \rangle$.
\end{lemma}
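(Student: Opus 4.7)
The plan is to reduce the lemma to the simple observation, already foreshadowed in the paragraph preceding it, that the set
$$T_E := \{a \in K : \delta_E(y) = a y \text{ has a nonzero solution in } E\}$$
is an additive subgroup of $K$. Once that is established, the lemma is immediate: the ``if'' direction is trivial since $S \subseteq \langle S \rangle$, while the ``only if'' direction follows because $S \subseteq T_E$ forces $\langle S \rangle \subseteq T_E$ as $T_E$ is a group.

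To verify that $T_E$ is a subgroup, I would first note $0 \in T_E$ (take $y=1$). For closure under addition and inversion, if $u_a, u_b \in E \setminus \{0\}$ are solutions of $\delta_E(y) = a y$ and $\delta_E(y) = b y$ respectively, then the Leibniz rule gives $\delta_E(u_a u_b) = (a+b) u_a u_b$, and differentiating the identity $u_a u_a^{-1} = 1$ yields $\delta_E(u_a^{-1}) = -a u_a^{-1}$. Thus $u_a u_b^{\pm 1}$ witnesses $a \pm b \in T_E$. (One must use nonzero solutions, so that the inverses make sense; this matches the intended reading of ``splitting field'', since a solution $y=0$ would make the definition vacuous.)

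With $T_E$ a subgroup of $K$, both directions follow in one line each: $E$ splits $S$ means $S \subseteq T_E$, and $E$ splits $\langle S \rangle$ means $\langle S \rangle \subseteq T_E$; these are equivalent precisely because $\langle S \rangle$ is the smallest subgroup containing $S$. There is no real obstacle here, just a bookkeeping step using the Leibniz rule; this is why the statement is packaged as a lemma rather than a theorem, and it is exactly what will allow the subsequent sections to pass freely between a finite set $S$ and the (typically larger) additive group it generates, in particular when intersecting with $\mathcal{A}$ to define $\mathcal{A}_S$.
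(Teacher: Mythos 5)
Your proposal is correct and follows essentially the same route as the paper, which derives the lemma directly from the observation that if $\delta_E(u_a)=au_a$ and $\delta_E(u_b)=bu_b$ with $u_a,u_b$ nonzero, then $u_au_b^{\pm1}$ solves $\delta(y)=(a\pm b)y$; your packaging of this as the statement that $T_E$ is an additive subgroup of $K$, together with the minimality of $\langle S\rangle$, is just a slightly more formal bookkeeping of the same argument.
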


Let $S \subseteq K \backslash\{0\}$ be a finite subset.
Since ${\rm char}(K) = 0$, the group $\langle S \rangle$ and its subgroups are finitely generated free abelian groups. Let us denote $\mathcal A \cap \langle S \rangle$ by $\mathcal A_S$. 
Let $\mathcal A_S'$ denote a $\mathbb Z$-complement of $\mathcal A_S$ in $\langle S \rangle$ so that 
the sum $\mathcal A_S + \mathcal A_S' = \langle S \rangle$ is direct. 

\begin{theorem}\label{transcendental-iff-AS-prime-nonzero}
Let $S \subseteq K \backslash\{0\}$ be a finite subset.
\begin{enumerate}
\item[(i).] If $\mathcal A_S' = 0$, then there exists a splitting field $(E, \delta_E)$ of $S$ such that the field extension $E/K$ is finite.
\item[(ii).] If $\mathcal A_S' \neq 0$ and $(E, \delta_E)$ is splitting field of $S$, then $E$ is transcendental over $K$.
\end{enumerate}
\end{theorem}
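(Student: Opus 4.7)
The plan is to handle the two parts separately: part (i) by an explicit construction of a finite splitting field as a radical extension, and part (ii) by a Galois-theoretic contradiction obtained via a norm computation.

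For part (i), suppose $\mathcal A_S' = 0$, so $\langle S \rangle = \mathcal A_S \subseteq \mathcal A$; in particular every $a \in S$ lies in $\mathcal A$. By definition of $\mathcal A$, for each $a \in S$ I can fix $n_a \in \mathbb N$ and $v_a \in K \setminus \{0\}$ with $\delta(v_a) = n_a a v_a$. I would then form $E = K(u_a : a \in S)$ with $u_a^{n_a} = v_a$, a finite algebraic extension since $S$ is finite, and equip it with the unique derivation $\delta_E$ extending $\delta$. Differentiating the relation $u_a^{n_a} = v_a$ and dividing by $n_a u_a^{n_a-1}$ yields $\delta_E(u_a) = a u_a$, so $(E, \delta_E)$ is a finite splitting field of $S$.

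For part (ii), suppose $\mathcal A_S' \neq 0$ and assume, for contradiction, that some splitting field $(E, \delta_E)$ has $E/K$ algebraic. Pick any nonzero $a \in \mathcal A_S'$; by Lemma \ref{splitting-field-same} applied with $\langle S \rangle$, there exists a nonzero $y \in E$ with $\delta_E(y) = ay$ (the intended meaning of ``contains a solution'' is the existence of a nonzero $y$, otherwise the notion is vacuous for $a \neq 0$). Let $L$ be a Galois closure of $E/K$ inside a fixed algebraic closure and let $\delta_L$ be the unique extension of $\delta$ to $L$; each $\sigma \in \mathrm{Gal}(L/K)$ commutes with $\delta_L$ because $\sigma^{-1} \delta_L \sigma$ is another derivation of $L$ extending $\delta|_K$, hence equals $\delta_L$ by the uniqueness already recalled in \S\ref{section-preliminaries}. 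Set $N = \prod_{\sigma \in \mathrm{Gal}(L/K)} \sigma(y)$; then $N$ is fixed by $\mathrm{Gal}(L/K)$, so $N \in K^\times$, and
\[
\frac{\delta(N)}{N} \;=\; \sum_{\sigma} \frac{\delta_L(\sigma(y))}{\sigma(y)} \;=\; \sum_{\sigma} \sigma\!\left(\frac{\delta_L(y)}{y}\right) \;=\; \sum_{\sigma} \sigma(a) \;=\; |\mathrm{Gal}(L/K)| \cdot a,
\]
since $a \in K$. Thus $|\mathrm{Gal}(L/K)| \cdot a \in \mathcal L_\delta(K) \subseteq \mathcal A$, and the closure of $\mathcal A$ under division by positive integers gives $a \in \mathcal A$. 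Combined with $a \in \langle S \rangle$, this yields $a \in \mathcal A \cap \langle S \rangle = \mathcal A_S$, so $a \in \mathcal A_S \cap \mathcal A_S' = 0$, contradicting $a \neq 0$.

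The main obstacle is simply the bookkeeping in (ii): fixing the convention that ``solution'' means nonzero solution, choosing a Galois closure so that the norm lands in $K$, and verifying that Galois automorphisms commute with the unique derivation extension. Once these standard differential-field facts are in place, the logarithmic-derivative computation above is short and produces the reduction from ``$E/K$ algebraic'' to ``$a \in \mathcal A$'', which is precisely what contradicts $\mathcal A_S' \neq 0$.
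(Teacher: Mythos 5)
Your proposal is correct. Part (i) is the same construction as the paper's: adjoin an $n_a$-th root $u_a$ of $v_a$ for each $a\in S$ and check $\delta_E(u_a)=au_a$. Part (ii), however, takes a genuinely different route. The paper fixes a nonzero solution $u_a$ of $\delta(y)=ay$ with $0\neq a\in\mathcal A_S'$, assumes $u_a$ algebraic over $K$, differentiates its minimal monic polynomial $f(x)=x^n+\dots+a_0$, identifies the resulting degree-$n$ polynomial with $naf(x)$, and compares constant terms to get $a=(1/n)\delta(a_0)a_0^{-1}\in\mathcal A$, whence $a\in\mathcal A_S\cap\mathcal A_S'=0$. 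You instead pass to a Galois closure, use uniqueness of the derivation extension to algebraic extensions to see that Galois automorphisms commute with $\delta_L$, and take the norm $N=\prod_\sigma\sigma(y)$, computing $\delta(N)/N=|\mathrm{Gal}(L/K)|\,a$; since the constant term $a_0$ is (up to sign) exactly this norm, the two arguments produce the same witness for $a\in\mathcal A$, but by different mechanisms. Your version is conceptually transparent (additivity of logarithmic derivatives under the Galois action), at the cost of importing the commutation fact and Galois machinery; the paper's version is more elementary and self-contained, needing only the minimal polynomial. One small repair you should make: if $E/K$ is merely algebraic it may be infinite, so ``the Galois closure of $E/K$'' has a profinite group and your product over $\mathrm{Gal}(L/K)$ is not defined; take $L$ to be the (finite) Galois closure of $K(y)/K$ instead, which is all the argument needs. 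With that adjustment, and your explicit convention that a ``solution'' means a nonzero solution (which the paper also uses implicitly), the proof is sound.
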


\begin{proof}
$(i).$ Since $\mathcal A_S' = 0$, $S \subseteq \mathcal A$. Let $a \in S \subseteq \mathcal A$. By definition of $\mathcal A$, there exist $v_a \in K \setminus \{0\}$ and $n \in \mathbb N$ such that $\delta(v_a) = nav_a$. Let $E_a$ be a field extension of $K$ that contains a root of the polynomial $x^n - v_a \in K[x]$. Then a solution of $x^n - v_a = 0$ in $E_a$ is also a solution of 
$\delta(y) = ay$ in $E_a$. Varying $a$ over $S$ and repeating this procedure, one may construct a finite extension $E$ of $K$ as required.

$(ii).$ Assume that $\mathcal A_S' \neq 0$. It is enough to exhibit an element $a \in \langle S \rangle$ such that no algebraic extension of $(K, \delta)$ contains a solution of $\delta(y) = ay$.
We claim that any nonzero element of $\mathcal A_S'$  serves the purpose.
Let $0 \neq a \in \mathcal A_S'$. Let $(E, \delta_E)$ be an extension of $(K, \delta)$ and $u_a \in E$ be such that $\delta_E(u_a) = au_a$. If $u_a$ is algebraic over $K$ then we take the minimal monic polynomial
$f(x)=x^n+a_{n-1}x^{n-1}+\dots +a_1x+a_0$; $a_0 \neq 0$
of $u_a$ over $K$.
Differentiating the relation $$u_a^n+a_{n-1}u_a^{n-1}+\dots+a_1u_a+a_0=0,$$
we get
$$(nu_a^{n-1}+(n-1)a_{n-1}u_a^{n-2}+\dots+a_0)a u_a+\delta(a_{n-1})u_a^{n-1}+\dots+\delta(a_0)=0.$$
Thus the polynomial
$g(x) := nax^n + ((n-1)a_{n-1}a + \delta(a_{n-1}))x^{n-1} + \dots + \delta(a_0)$
of degree $n$ is satisfied by $u_a$. Therefore, $g(x)= naf(x)$. Now comparing constant terms of these polynomials, $a =(1/n)\delta(a_0)a_0^{-1} \in {\rm span}_{\mathbb Q}(\mathcal L_{\delta}(K)) = \mathcal A$.
Thus $a \in (\mathcal A \cap \langle S \rangle) \cap \mathcal A_S' = 0$. This is a contradiction. 
\end{proof}

Now we shall compute the minimal transcendence degree over $K$ amongst the splitting fields of $S$. If $\mathcal A_S' = 0$; the minimal transcendence degree is $0$, as is evident from Theorem \ref{transcendental-iff-AS-prime-nonzero}$(i)$. Thus we would be interested in $\mathcal A_S' \neq 0$ case. For this we introduce some notation.

Let $S \subseteq K \backslash\{0\}$ be a finite subset with $\mathcal A_S' \neq 0$ and $(E, \delta_E)$ be an extension of $(K, \delta)$ that contains elements $s_a \in E$ such that $\delta_E(s_a) = a s_a$ for $a \in \mathcal A_S'$.
Scaling each $s_a \in E$ by a suitable element of $C_E$, if required, we may assume that
$s_a s_b = s_{a+b}$ for each $a, b \in \mathcal A_S'$. We may also assume $s_0 = 1$. 

\begin{proposition}\label{independence-of_s_a}
With the notation as above, we have the following.
\begin{enumerate}
\item[(i).] The subset $\{s_a : a\in \mathcal A_S'\}$ of $E$ is linearly independent over $K$.
\item[(ii).] Let $a_1,a_2, \dots, a_r \in \mathcal A_S'$. Then $\{a_1, a_2, \dots, a_r\}$ is a $\mathbb Q$-linearly independent subset of $K$ if and only if  $s_{a_1}, s_{a_2}, \dots, s_{a_r}\in E$ are algebraically independent over $K$.
\end{enumerate}
\end{proposition}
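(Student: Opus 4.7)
The plan is to prove (i) first by a minimal-length argument on $K$-linear dependences among the $s_a$, and then to deduce (ii) from (i) by exploiting the fact that $a \mapsto s_a$ is a group homomorphism from $\mathcal{A}_S'$ into $E^{\times}$. A useful preliminary remark is that $s_a \neq 0$ for every $a \in \mathcal{A}_S'$, since $s_a s_{-a} = s_0 = 1$, and that $(s_a)^{-1} = s_{-a}$.

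For part (i), I would assume towards a contradiction a nontrivial $K$-linear relation $\sum_{i=1}^n c_i s_{a_i} = 0$ of minimal length $n \geq 2$, with the $a_i \in \mathcal{A}_S'$ pairwise distinct and every $c_i \in K^{\times}$ (after rescaling, $c_1 = 1$). Applying $\delta_E$ and then subtracting $a_1$ times the original relation eliminates the $s_{a_1}$ term and leaves
\[
\sum_{i=2}^n \bigl(\delta(c_i) + c_i(a_i - a_1)\bigr)\, s_{a_i} = 0.
\]
Minimality of $n$ forces every coefficient to vanish, hence $\delta(c_i)/c_i = a_1 - a_i$, and so $a_1 - a_i \in \mathcal{L}_\delta(K) \subseteq \mathcal{A}$ for each $i \geq 2$. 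Since $a_1 - a_i$ also lies in $\mathcal{A}_S'$, the key group-theoretic identity $\mathcal{A} \cap \mathcal{A}_S' = 0$, which follows from $\mathcal{A}_S = \mathcal{A} \cap \langle S \rangle$ together with the direct-sum decomposition $\langle S \rangle = \mathcal{A}_S \oplus \mathcal{A}_S'$, forces $a_1 = a_i$, contradicting distinctness.

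For part (ii), the homomorphism property gives $s_{a_1}^{\alpha_1}\cdots s_{a_r}^{\alpha_r} = s_{\alpha_1 a_1 + \cdots + \alpha_r a_r}$ for any integer exponents. In the forward direction, $\mathbb{Q}$-linear independence of $a_1, \ldots, a_r$ implies $\mathbb{Z}$-linear independence, so different nonnegative exponent tuples produce distinct elements of $\mathcal{A}_S'$; a polynomial relation $\sum_\alpha c_\alpha s_{a_1}^{\alpha_1}\cdots s_{a_r}^{\alpha_r}=0$ therefore rewrites as a $K$-linear dependence among distinct $s_a$'s, which part (i) forces to be trivial. Conversely, a nontrivial integer relation $\sum n_i a_i = 0$ yields $\prod s_{a_i}^{n_i} = s_0 = 1$, and moving the factors with negative exponents to the other side produces a nontrivial polynomial identity satisfied by $s_{a_1}, \ldots, s_{a_r}$ over $K$.

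The main obstacle is organizing the minimal-dependence argument in (i) and correctly verifying the intersection identity $\mathcal{A} \cap \mathcal{A}_S' = 0$; once these are in hand, both directions of (ii) reduce to formal manipulations with the character-like map $a \mapsto s_a$ and the distinctness of its values on $\mathbb{Q}$-linearly independent tuples.
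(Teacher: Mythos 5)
Your proposal is correct and follows essentially the same route as the paper: part (i) is proved by taking a minimal-length $K$-linear dependence, differentiating and eliminating the first term to force $a_1 - a_i$ to be a logarithmic derivative, then invoking $\mathcal A \cap \mathcal A_S' = 0$ from the direct-sum decomposition of $\langle S\rangle$ (your normalization $c_1=1$ is just a cosmetic variant of the paper's cross-multiplication step), and part (ii) is deduced exactly as in the paper from the multiplicativity $s_a s_b = s_{a+b}$ together with (i). No gaps.
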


\begin{proof}
$(i).$ 
Let $r \in \mathbb N$ be the smallest integer such that
\begin{equation}\label{l_d_eqn}
\sum_{i=1}^{r}\ell_i s_{a_i}=0
\end{equation}
for $0 \neq \ell_i \in K$.
Applying $\delta_E$ on both sides of the equation \ref{l_d_eqn}, we get
\begin{equation}\label{l_d_eqn_diff}
(\delta(\ell_1)+\ell_1a_1)s_{a_1}+\sum_{i=2}^{r}\left(\delta(\ell_i)+\ell_ia_i\right)s_{a_i}=0.
\end{equation}
Multiplying equation \ref{l_d_eqn} by $\delta(\ell_1)+\ell_1 a_1$ and equation \ref{l_d_eqn_diff} by $\ell_1$, and subtracting
\begin{equation}\label{l_d_eqn_smaller}
\sum_{i=2}^{r}((\delta(\ell_1)+\ell_1a_1)\ell_i-(\delta(\ell_i)+\ell_ia_i)\ell_1)s_{a_i}=0.
\end{equation}
By minimality of $r$, each coefficient of $s_{a_i}$ in equation \ref{l_d_eqn_smaller} must be $0$. Thus $$(\delta(\ell_1)+\ell_1a_1)\ell_i-(\delta(\ell_i)+\ell_ia_i)\ell_1=0$$ for each $i\in \{2,\dots, r\}$. Rearranging these terms, $a_1-a_i=\delta(\ell_i)\ell_i^{-1}-\delta(\ell_1)\ell_1^{-1}$.
Evidently $a_1 - a_i \in \mathcal A_S'$ and
$\delta(\ell_i)\ell_i^{-1} \in \mathcal A$.
Thus $a_1 - a_i \in \mathcal A_S' \cap \mathcal A = \{0\}$, and hence $a_1=a_i$ for each $i\in \{2,\dots, r\}$. 
This is a contradiction to the minimality of $r$, and we conclude that $\{s_a : a\in \mathcal A_S'\}$ is a linearly independent set over $K$.

$(ii).$
We first assume that $\{a_1, a_2, \dots, a_r\}$ is a $\mathbb Q$-linearly dependent set. Then there exist nonzero $n_i\in \mathbb Z$, such that $\sum_{i}n_i a_i = 0$. Hence
$\prod_i s_{a_i}^{n_i} = s_{\sum_{i}n_i a_i} = s_0 = 1$ is an algebraic relation over $\mathbb Q$ that is satisfied by $s_{a_1}, s_{a_2}, \dots, s_{a_r}$. 
	
Conversely, assume that $\{a_1, a_2, \dots, a_r\}$ is a $\mathbb Q$-linearly independent set. If possible, let 
$$f(x_1, x_2, \dots, x_r) = \sum_{(i_1,i_2,\dots,i_r) \in \mathcal I}\ell_{(i_1,i_2,\dots,i_r)} x_1^{i_1} x_2^{i_2}\dots x_r^{i_r} \in K[x_1,x_2,\dots,x_r]$$ be a polynomial such that $f(s_{a_1}, s_{a_2}, \dots, s_{a_r}) = 0 \in E$. Then
$$\sum_{(i_1,i_2,\dots,i_r) \in \mathcal I}\ell_{(i_1,i_2,\dots,i_r)} s_{a_1}^{i_1} s_{a_2}^{i_2}\dots s_{a_r}^{i_r} = 0 = 
\sum_{(i_1,i_2,\dots,i_r) \in \mathcal I}\ell_{(i_1,i_2,\dots,i_r)} s_{i_1 a_1 + i_2 a_2 + \dots + i_r a_r} = 0 \in E
$$
Since $\{a_1, a_2, \dots, a_r\}$ is a $\mathbb Q$-linearly independent set, $i_1 a_1 + i_2 a_2 + \dots + i_r a_r$ are all distinct as $(i_1,i_2,\dots,i_r)$ varies over $\mathcal I$. Now from part $(i)$ of this proposition, the set $$\{s_{i_1 a_1 + i_2 a_2 + \dots + i_r a_r} : (i_1,i_2,\dots,i_r) \in \mathcal I\}$$ is linear independent over $K$. 
Thus  $\ell_{(i_1,i_2,\dots,i_r)} = 0$ for each $(i_1,i_2,\dots,i_r) \in \mathcal I$ and
$$f(x_1, x_2, \dots, x_r) = 0 \in K[x_1, x_2, \dots, x_r].$$ Hence $s_{a_1}, s_{a_2}, \dots, s_{a_r}$ are algebraically independent over $K$.
\end{proof}

As a corollary to Proposition \ref{independence-of_s_a}
we derive the following.

\begin{corollary}\label{trdeg-and-rank-AS-prime}
Let $(K, \delta)$ be a differential field and $S\subseteq K\backslash\{0\}$
be a finite set. Let $(E, \delta_E)$ be a splitting field of $S$. Then ${\rm trdeg}_K(E) \geq {\rm rank}(\mathcal A_S')$.
\end{corollary}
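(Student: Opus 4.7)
The proof should be a short deduction from Proposition \ref{independence-of_s_a}(ii). The plan is to produce a $\mathbb{Q}$-linearly independent family in $\mathcal{A}_S'$ of size $\mathrm{rank}(\mathcal{A}_S')$, lift it to a family of algebraically independent elements of $E$ via the proposition, and thereby bound the transcendence degree from below.

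First I would invoke Lemma \ref{splitting-field-same} to upgrade $(E,\delta_E)$ from a splitting field of $S$ to a splitting field of $\langle S \rangle$. Since $\mathcal{A}_S' \subseteq \langle S \rangle$, this guarantees that for every $a \in \mathcal{A}_S'$ there exists $s_a \in E$ with $\delta_E(s_a) = a s_a$. I would then point out that, exactly as set up in the paragraph preceding Proposition \ref{independence-of_s_a}, one may rescale each $s_a$ by a constant from $C_E$ so that $s_0 = 1$ and $s_a s_b = s_{a+b}$ for all $a,b \in \mathcal{A}_S'$; this is possible because $\mathcal{A}_S'$ is a finitely generated free abelian group, so one picks the $s_a$ arbitrarily on a $\mathbb{Z}$-basis and extends multiplicatively. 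This places us squarely in the setting of Proposition \ref{independence-of_s_a}.

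Next, let $r = \mathrm{rank}(\mathcal{A}_S')$ and choose a $\mathbb{Z}$-basis $a_1, \dots, a_r$ of $\mathcal{A}_S'$. Because $\mathcal{A}_S'$ is torsion-free, any such $\mathbb{Z}$-basis is automatically $\mathbb{Q}$-linearly independent in $K$: a nontrivial $\mathbb{Q}$-relation could be cleared of denominators to yield a nontrivial $\mathbb{Z}$-relation, contradicting the $\mathbb{Z}$-basis property. Applying Proposition \ref{independence-of_s_a}(ii) to $a_1, \dots, a_r$ shows that $s_{a_1}, \dots, s_{a_r} \in E$ are algebraically independent over $K$. Consequently, $\mathrm{trdeg}_K(E) \geq r = \mathrm{rank}(\mathcal{A}_S')$, which is what we wanted.

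There is no real obstacle here since the heavy lifting has been done by Proposition \ref{independence-of_s_a}; the only point that deserves a sentence of justification is the passage from a $\mathbb{Z}$-basis of the free abelian group $\mathcal{A}_S'$ to a $\mathbb{Q}$-linearly independent subset of the characteristic-zero field $K$, and the observation that a splitting field of $S$ provides the required multiplicatively normalized family $\{s_a\}_{a \in \mathcal{A}_S'}$.
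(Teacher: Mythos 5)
Your proof is correct and follows essentially the same route as the paper: pass to $\langle S\rangle$ via Lemma \ref{splitting-field-same}, take a $\mathbb Z$-basis of $\mathcal A_S'$, and apply Proposition \ref{independence-of_s_a}(ii) to get algebraically independent elements $s_{a_1},\dots,s_{a_r}$ of $E$. The extra remarks you add (multiplicative normalization of the $s_a$ and the passage from a $\mathbb Z$-basis to a $\mathbb Q$-independent set) are exactly the points the paper handles in the setup preceding the proposition, so nothing is missing.
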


\begin{proof}
Since $(E, \delta_E)$ is a splitting field of $S$, by Lemma
\ref{splitting-field-same} it is also a splitting field of 
$\langle S \rangle$. In particular, the equation $\delta(y) = ay$ has a solution $s_a \in E$ for each $a \in \mathcal A_S'$. Let $\mathcal B_S'$ be a $\mathbb Z$-basis of 
$\mathcal A_S'$. By Proposition \ref{independence-of_s_a},
the elements $s_a$, where $a \in \mathcal B_S'$, are algebraically independent over $K$. Thus ${\rm trdeg}_K(E) \geq |\mathcal B_S'| = {\rm rank}(\mathcal A_S')$.
\end{proof}

We assert that there indeed exists a splitting field $(E, \delta_E)$ of $S$ such that ${\rm trdeg}_K(E) = {\rm rank}(\mathcal A_S')$. To establish this assertion, we construct a differential extension $(K(t_S), \delta_{K(t_S)})$ of $(K, \delta)$ as follows: Fix a $\mathbb Z$-basis $\mathcal B_S'$ of $\mathcal A_S'$. Let $t_S := \{t_a : a \in \mathcal B_S'\}$ be a set, in bijection with $\mathcal B_S'$, consisting of algebraically independent mutually commuting indeterminates over $K$. We denote by $K(t_S)$ the field generated by $K$ and $t_S$. We extend the derivation $\delta$ of $K$ to $K(t_S)$ by defining $\delta_{K(t_S)}(t_a) = at_a$ for each $a \in \mathcal B_S'$. It is easy to check that 
$$\delta_{K(t_S)}(t_a^m t_b^n)=(ma+nb)t_a^m t_b^n,$$
for $m,n \in \mathbb Z$ and $a, b \in \mathcal B_S'$.
Therefore $(K(t_S), \delta_{K(t_S)})$ has a solution of the equation $\delta(y) = ay$ for each $a \in \mathcal A_S'$ and the transcendence degree of $K(t_S)$ over $K$ is equal to the rank of $\mathcal A_S'$.

\begin{proposition}\label{deg_tr}
Let $(K, \delta)$ be a differential field and $S \subseteq K\backslash \{0\}$. Then there exists a splitting field $(E, \delta_E)$ of $S$ such that ${\rm trdeg}_K(E) = {\rm rank}(\mathcal A_S')$.
\end{proposition}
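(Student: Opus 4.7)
The plan is to start from the differential extension $(K(t_S), \delta_{K(t_S)})$ constructed in the paragraph immediately preceding the statement, which already furnishes a solution of $\delta(y) = ay$ for every $a$ in a $\mathbb Z$-basis $\mathcal B_S'$ (hence for every $a \in \mathcal A_S'$, using the products $t_a^{m}t_b^{n}$), and then to enlarge it by a \emph{finite algebraic} extension that supplies solutions for the remaining elements, namely those in $\mathcal A_S$. Since $\mathcal A_S + \mathcal A_S' = \langle S \rangle$, the resulting field will be a splitting field of $\langle S \rangle$, and by Lemma \ref{splitting-field-same} also a splitting field of $S$.

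To produce the algebraic part, fix a finite generating set of the (finitely generated free abelian) group $\mathcal A_S \subseteq \mathcal A$. For each generator $a$, the definition of $\mathcal A$ provides $v_a \in K \setminus \{0\}$ and $n_a \in \mathbb N$ with $\delta(v_a) = n_a a v_a$. Let $E$ be obtained from $K(t_S)$ by successively adjoining, for each such generator $a$, a root $w_a$ of $x^{n_a} - v_a$. Only finitely many algebraic elements are adjoined, so $E/K(t_S)$ is finite, and the derivation $\delta_{K(t_S)}$ extends uniquely to a derivation $\delta_E$ on $E$ by the algebraic-extension fact recalled in \S\ref{section-preliminaries}. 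Differentiating the relation $w_a^{n_a} = v_a$ and using $\delta_E(v_a) = n_a a v_a$ yields $n_a w_a^{n_a - 1}\delta_E(w_a) = n_a a w_a^{n_a}$, hence $\delta_E(w_a) = a w_a$, as needed. Multiplicativity of solutions (as exploited before Lemma \ref{splitting-field-same}) then produces a solution of $\delta(y) = ay$ in $E$ for every element of $\mathcal A_S$, and combining with the $t_a$'s, for every element of $\langle S \rangle$.

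Finally, ${\rm trdeg}_K(E) = {\rm trdeg}_K(K(t_S)) = |\mathcal B_S'| = {\rm rank}(\mathcal A_S')$, since $E/K(t_S)$ is algebraic and the $t_a$, $a \in \mathcal B_S'$, are algebraically independent over $K$ by construction. This matches the lower bound of Corollary \ref{trdeg-and-rank-AS-prime}, giving equality.

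The routine core of the argument — extending $\delta$ to $K(t_S)$ and then to its finite algebraic extension $E$ — is already supplied by the construction preceding the proposition and the standard unique-extension result for derivations; the only non-bookkeeping step is verifying $\delta_E(w_a) = a w_a$, which is a one-line differentiation. The main conceptual point to get right, and the only place where something could go wrong, is the decomposition $\mathcal A_S + \mathcal A_S' = \langle S \rangle$: it is precisely this that lets us cleanly split the construction into a transcendental piece (contributing exactly ${\rm rank}(\mathcal A_S')$ to the transcendence degree) and an algebraic piece (contributing nothing), so that the transcendence degree of the constructed splitting field meets the lower bound from Corollary \ref{trdeg-and-rank-AS-prime}.
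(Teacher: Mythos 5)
Your argument is correct and follows essentially the same route as the paper: start from $(K(t_S),\delta_{K(t_S)})$, adjoin finitely many radicals $v_a^{1/n_a}$ to supply solutions for a $\mathbb Z$-basis of $\mathcal A_S$, observe that this extension is algebraic so contributes nothing to transcendence degree, and use multiplicativity of solutions together with the decomposition $\mathcal A_S + \mathcal A_S' = \langle S\rangle$ and Lemma \ref{splitting-field-same} to conclude that $E$ splits $S$ with ${\rm trdeg}_K(E) = {\rm rank}(\mathcal A_S')$. The only cosmetic difference is that the paper writes out, for each $a = \alpha + \beta \in S$, the explicit monomial solution $\prod_i v_i^{k_i/n_i}\prod_j t_{\beta_j}^{\ell_j}$ and verifies its derivative by direct computation, whereas you first produce solutions for basis elements and then invoke multiplicativity.
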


\begin{proof}
We claim that there exists an algebraic extension of the field $K(t_S)$ containing solutions of $\delta(y) = ay$ for all $a \in S$.
To see this, let $a \in S$. Then $a = \alpha + \beta$ for suitable $\alpha \in \mathcal A_S$ and $\beta \in \mathcal A_S'$. Fix $\mathbb Z$-bases $\mathcal B_S$ of $\mathcal A_S$ and $\mathcal B_S'$ of $\mathcal A_S'$.
Let $\alpha = \sum_{i=1}^{r} k_i \alpha_i$ and $\beta = \sum_{j=1}^{s} \ell_j \beta_j$, where $\alpha_i \in \mathcal B_S$, $\beta_j\in \mathcal B_S'$ and $k_i, \ell_j \in \mathbb Z$. Since $\alpha_i \in \mathcal B_S \subset \mathcal A$,
there exist $n_i \in \mathbb N$ and $v_i \in K$ such that 
$\delta(v_i) = n_i \alpha_i v_i$. Consider the field
$E = K(t_S)(v_1^{1/n_1}, v_2^{1/n_2}, \dots, v_r^{1/n_r})$. 
The derivation $\delta_{K(t_S)}$ extends uniquely to a 
derivation $\delta_E$ of $E$, and

\begin{equation*}
\begin{split}
\delta_E\left(\prod_{i=1}^r v_i^{k_i/n_i}\right) &=
\sum_{i = 1}^r \delta_E(v_i^{k_i/n_i})\prod_{j \neq i} v_{j}^{k_{j}/n_j} \\
&= \sum_{i = 1}^r (k_i/n_i)  v_i^{k_i/n_i} v_i^{-1} \delta_E(v_i)\prod_{j \neq i} v_{j}^{k_{j}/n_j} \\
&= \sum_{i = 1}^r (k_i/n_i)  v_i^{k_i/n_i} n_i \alpha_i \prod_{j \neq i} v_{j}^{k_{j}/n_j} \\
&= \sum_{i = 1}^r k_i \alpha_i \prod_{j = 1}^r v_j^{k_{j}/n_j} = \alpha \prod_{i = 1}^r v_{i}^{k_{i}/n_i} 
\end{split}
\end{equation*}
and a similar calculation yields

\begin{equation*}
\begin{split}
\delta_E\left(\prod_{j=1}^{s}t_{\beta_j}^{\ell_j}\right) = \beta \prod_{j=1}^{s}t_{\beta_j}^{\ell_j}.
\end{split}
\end{equation*}

Now,
\begin{equation*}
\begin{split}
\delta_E\left(\prod_{i=1}^r v_i^{k_i/n_i}\prod_{j=1}^{s}t_{\beta_j}^{\ell_j}\right) &=  \delta_E\left(\prod_{i=1}^r v_i^{k_i/n_i}\right)\prod_{j=1}^{s}t_{\beta_j}^{\ell_j}+\prod_{i=1}^r v_i^{k_i/n_i}\delta_E\left(\prod_{j=1}^{s}t_{\beta_j}^{\ell_j}\right)\\ &=\alpha \prod_{i=1}^r v_i^{k_i/n_i}\prod_{j=1}^{s}t_{\beta_j}^{\ell_j}+\prod_{i=1}^r v_i^{k_i/n_i} \beta \prod_{j=1}^{s}t_{\beta_j}^{\ell_j}\\
&= (\alpha + \beta) \prod_{i=1}^r v_i^{k_i/n_i}\prod_{j=1}^{s}t_{\beta_j}^{\ell_j} = a \prod_{i=1}^r v_i^{k_i/n_i}\prod_{j=1}^{s}t_{\beta_j}^{\ell_j}.
\end{split}
\end{equation*}
Thus for each $a \in S$, the equation $\delta(y) = ay$ has a solution
$\prod_{i=1}^r v_i^{k_i/n_i}\prod_{j=1}^{s}t_{\beta_j}^{\ell_j} \in E$. Clearly $E$ is an algebraic extension of $K(t_S)$ and
${\rm trdeg}_K(E) = {\rm trdeg}_K(K(t_S)) = {\rm rank}(\mathcal A_S')$.
\end{proof}

We note that the degree of the finite extension $E$ over $K(t_S)$ is at most $n_1 n_2 \dots n_r$. We may choose
$n_i = n_{a_i}$, where $n_{a_i}$ is the smallest positive integer such that $\delta(y) = n_{a_i}a_iy$ has a solution in $K$, to obtain a better bound on the degree of $E$ over $K(t_S)$. 

Thus far, we have constructed a differential field extension of $(K,\delta)$ which contains solutions of differential equations of the form $\delta(y)=ay$ where $a\in S$, and $S\subseteq K \backslash \{0\}$ is a finite set. We shall now focus on solving the system of equations
$\delta(y_1)=ay_1, \delta(y_i)=y_{i-1}+ay_i, \text{for } i \geq 2$.

\begin{lemma}\label{derivative_1}
Let $a \in K$ and $(L, \delta_L)$ be a differential extension of $(K, \delta)$ that contains a solution $u_a$ of $\delta(y) = ay$. Let $n \geq 2$ be an integer.
Consider the following system of equations over $L$.
\begin{equation}\label{jordan-block-solution}
y_1 = u_a, \delta_L(y_i)=y_{i-1}+ay_i, \text{ for } 2 \leq i \leq n
\end{equation}
Then the following are equivalent.
\begin{enumerate}
\item[(i).] There exists $w \in L$ such that $\delta_L(w) = 1$.
\item[(ii).] The differential field $(L, \delta_L)$ contains a solution of \ref{jordan-block-solution}.
\item[(iii).] An algebraic extension of $(L, \delta_L)$ contains a solution of \ref{jordan-block-solution}.
\end{enumerate}
Further, if these equivalent conditions do not hold, then there exists a differential extension $(E,\delta_E)$ over $(L, \delta_L)$ such that ${\rm trdeg}_L(E) = 1$ and $E$ contains a solution of \ref{jordan-block-solution}.
\end{lemma}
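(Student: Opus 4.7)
The plan is to reduce the system \ref{jordan-block-solution} to the single equation $\delta(w) = 1$ and then handle the algebraic vs. transcendental dichotomy using a minimal polynomial argument. The reduction is the substitution $y_i = u_a z_i$ with $z_1 = 1$. Using $\delta_L(u_a) = au_a$, the equation $\delta_L(y_i) = y_{i-1} + ay_i$ collapses to $\delta_L(z_i) = z_{i-1}$. Unwinding this recursion with $z_1 = 1$ shows that a solution exists in any differential overfield $(L', \delta_{L'})$ of $(L, \delta_L)$ if and only if some $w \in L'$ satisfies $\delta_{L'}(w) = 1$, in which case $z_i = w^{i-1}/(i-1)!$. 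This establishes (i) $\Leftrightarrow$ (ii) and reduces (iii) to the assertion that if some algebraic extension of $L$ contains an element $w$ with $\delta(w) = 1$, then $L$ already contains such an element.

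For (iii) $\Rightarrow$ (i), suppose $w$ is algebraic over $L$ with minimal monic polynomial $f(x) = x^n + a_{n-1}x^{n-1} + \dots + a_0 \in L[x]$ and $\delta(w) = 1$. Differentiating $f(w) = 0$ and using $\delta(w) = 1$ yields
\[
nw^{n-1} + (n-1)a_{n-1}w^{n-2} + \dots + a_1 + \delta(a_{n-1})w^{n-1} + \dots + \delta(a_0) = 0.
\]
The polynomial on the left has degree at most $n-1$ and is satisfied by $w$, so by minimality of $f$ it must be identically zero. Reading off the coefficient of $w^{n-1}$ gives $n + \delta(a_{n-1}) = 0$, hence $\delta(-a_{n-1}/n) = 1$ with $-a_{n-1}/n \in L$, which is exactly (i). Combined with the trivial (ii) $\Rightarrow$ (iii), this completes the equivalence.

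For the final assertion, assume that (i)--(iii) fail. Let $w$ be transcendental over $L$ and set $E = L(w)$. The derivation $\delta_L$ extends to a derivation $\delta_E$ on the polynomial ring $L[w]$ by declaring $\delta_E(w) = 1$, and extends further uniquely to the field of fractions $L(w)$. By construction $\mathrm{trdeg}_L(E) = 1$ and $E$ contains an element $w$ with $\delta_E(w) = 1$, so by the (i) $\Rightarrow$ (ii) direction already proved, $E$ contains a solution of \ref{jordan-block-solution}. The main point requiring care is the (iii) $\Rightarrow$ (i) step, where one must verify that the derivative of the minimal polynomial can genuinely be compared degree-wise against the minimal polynomial itself; the rest of the argument is essentially a bookkeeping reduction via the substitution $y_i = u_a z_i$.
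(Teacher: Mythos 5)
Your proposal is correct and follows essentially the same route as the paper: reduce the system via the substitution $y_i = u_a z_i$ (the paper does this implicitly by rewriting $\delta_L(y_i u_a^{-1})$), push an algebraic element with derivative $1$ down to $L$ by differentiating its minimal polynomial and comparing degrees, and adjoin a transcendental $w$ with $\delta(w)=1$ for the last assertion. Your version of the minimal polynomial step, concluding that the degree-$\leq n-1$ polynomial vanishes identically and reading off the top coefficient, is in fact a slightly cleaner formulation than the paper's case analysis, but it is the same argument; only the reuse of the letter $n$ for both the system size and the degree of the minimal polynomial should be fixed.
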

\begin{proof}
Suppose $w\in L$ is such that $\delta_L(w)=1$. The equation corresponding to $i=2$ in \ref{jordan-block-solution} is $\delta_L(y_2)=u_a+ay_2$. This can be rewritten as 
$\delta_L(y_2 u_a^{-1}) = 1$. Since $w \in L$ is such that $\delta_L(w) = 1$, there exists $c_2 \in C_L$ such that
$y_2 u_a^{-1}=w+c_2$. Thus $y_2 = (w+c_2)u_a \in L$ is a solution of $\delta_L(y_2)=u_a+ay_2$. Now for $i = 3$, the equation is $\delta_L(y_3)=y_2+ay_3$. Substituting $y_2 = (w+c_2)u_a \in L$, the equation is $\delta_L(y_3)=(w+c_2)u_a  +ay_3$. Again, this can be rewritten as $\delta_L(y_3 u_a^{-1}) = w+c_2$. Since $w+c_2 = \delta_L(w^2/2+c_2w)$, there exists $c_3 \in C_L$ such that $y_3 u_a^{-1} = w^2/2+c_2w+c_3$. Thus
$y_3 = (w^2/2+c_2w+c_3)u_a \in L$ is a solution of $\delta_L(y_3)=y_2+ay_3$. Proceeding this way,

$$y_i = \left(\sum_{j = 0}^{i-1} c_{i-j} (j!)^{-1} w^j 
\right)u_a \in L,$$
where $c_1 = 1$ and $c_{i-j}$ are suitable constants in $L$, is a solution set of the system of equations \ref{jordan-block-solution}.

Now, suppose $(M,\delta_M)$ is an algebraic extension of $(L, \delta_L)$ that contains a solution of \ref{jordan-block-solution}. In particular, there exists $z_2 \in M$ such that
$\delta_M(z_2)=u_a+az_2$. Let $u = z_2 u_a^{-1} \in M$. Then $\delta_M(u) = 1$. If $u \in L$ then $w = u$ satisfies $\delta_L(w) = 1$. Thus we may assume that $u \notin L$. Since $M$ is an algebraic extension of $L$, we consider the
minimal monic polynomial
$f(x)=x^k+a_{k-1} x^{k-1}+\dots + a_1x + a_0$ of $w$ over $L$.
Thus,
$$f(u)=u^k+a_{k-1} u^{k-1}+\dots + a_1 u + a_0=0.$$
Applying $\delta_M$ on both sides of this equation,
$$\delta_M(f(u))=(k+\delta_L(a_{k-1}))u^{k-1}+\dots+(2a_2 + \delta_L(a_1))u+a_1+\delta_L(a_0)=0.
$$

If $k+\delta_L(a_{k-1}) \neq 0$, then we arrive at a contradiction to the minimality of $f(x)$, unless $k = 1$.
However, $k = 1$ too, leads to a contradiction since $u \notin L$. Thus $k+\delta_L(a_{k-1}) = 0$ and $w = -k^{-1}a_{k-1} \in L$ satisfies $\delta_L(w) =1$. 

This establishes the equivalence of three conditions.

If these equivalent conditions do not hold then we extend the derivation $\delta_L$ to the field
$E = L(t)$, by defining $\delta_E(t) = 1$. Since $w = t \in E$ satisfies $\delta_E(w) = 1$, by the above equivalent conditions, the system of equations \ref{jordan-block-solution} has a solution
in $(E, \delta_E)$.
\end{proof}
\begin{corollary}\label{derivative_1_in_alg_ext}
Let $(K,\delta)$ be a differential field and $(E,\delta_E)$ be an algebraic extension of $(K,\delta)$. Then there exists $w\in E$ such that $\delta_E(w)=1$ if and only if there exists $v\in K$ such that $\delta(v)=1$.
\end{corollary}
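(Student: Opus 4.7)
The ``if'' direction is immediate: if $v \in K$ satisfies $\delta(v) = 1$, then $w = v \in E$ satisfies $\delta_E(w) = 1$, since $\delta_E$ extends $\delta$.

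For the ``only if'' direction, my plan is to mimic the minimal polynomial argument used in the proof of Lemma \ref{derivative_1} (the implication $(iii)\Rightarrow(i)$), which is the natural template since the present statement is essentially that argument with the roles of $L$ and $M$ played here by $K$ and $E$. Assume $w \in E$ with $\delta_E(w) = 1$. If $w \in K$, we take $v = w$ and are done. Otherwise, since $E/K$ is algebraic, $w$ admits a minimal monic polynomial
$$f(x) = x^k + a_{k-1} x^{k-1} + \dots + a_1 x + a_0 \in K[x]$$
with $k \geq 2$. Applying $\delta_E$ to $f(w) = 0$ and using $\delta_E(w) = 1$, I expect to obtain a polynomial relation for $w$ of degree at most $k-1$ whose leading coefficient is $k + \delta(a_{k-1})$.

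By the minimality of $f$, all coefficients of this relation must vanish; in particular $k + \delta(a_{k-1}) = 0$, so $\delta(a_{k-1}) = -k$ and $v := -k^{-1} a_{k-1} \in K$ satisfies $\delta(v) = 1$, completing the proof.

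I do not anticipate a genuine obstacle: this is a short computational verification, and the only subtlety is ensuring that the degree of the differentiated polynomial is strictly less than $k$, which is automatic as soon as the $x^k$-term cancels after differentiation (the derivative of $x^k$ with respect to $\delta_E$ contributes $k w^{k-1}$, not $k w^k$, since $w$ is in the variable position and $\delta_E(w) = 1$). The minimality of $f$ then forces the needed identity, and no use of the earlier system \eqref{jordan-block-solution} is required.
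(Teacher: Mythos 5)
Your proof is correct and follows essentially the same route as the paper: the corollary is exactly the minimal-polynomial computation from the proof of Lemma~\ref{derivative_1} (the implication $(iii)\Rightarrow(i)$, in effect with $a=0$), which you reproduce verbatim with $K$ and $E$ in place of $L$ and $M$. The leading-coefficient identity $k+\delta(a_{k-1})=0$ and the conclusion $v=-k^{-1}a_{k-1}$ match the paper's argument, so there is nothing to add.
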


\begin{lemma}\label{A_K-and-A_K(w)}
Let $(K, \delta)$ be a differential field such that $\delta(z)\neq 1$ for every $z \in K$. Let $L = K(w)$ be a simple transcendental extension of $K$. Let $\delta_L$ be an extension of $\delta$ to $L$ such that
$\delta_L(w) = 1$. Let 
$$\mathcal A_K = \{a \in K : \delta(y) = nay \text{ has a nonzero solution in } K \text{ for some } n \in \mathbb N\}, and$$
$$\mathcal A_L = \{a \in L : \delta(y) = nay \text{ has a nonzero solution in } L \text{ for some } n \in \mathbb N\}.$$
Then $\mathcal A_K=\mathcal A_L\cap K$.
\end{lemma}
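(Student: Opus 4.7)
My plan is to handle the two inclusions separately. The inclusion $\mathcal{A}_K \subseteq \mathcal{A}_L \cap K$ is immediate: a nonzero $K$-solution of $\delta(y) = nay$ is also an $L$-solution of $\delta_L(y) = nay$, so any $a \in \mathcal{A}_K$ lies in $\mathcal{A}_L \cap K$.

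For the nontrivial inclusion $\mathcal{A}_L \cap K \subseteq \mathcal{A}_K$, I take $a \in \mathcal{A}_L \cap K$ together with $n \in \mathbb{N}$ and $y \in L^{*}$ satisfying $\delta_L(y) = nay$, and descend this $L$-solution to a $K$-solution using the rational-function description $L = K(w)$. Writing $y = p/q$ with coprime $p, q \in K[w]$, the relation $\delta_L(y) = nay$ rearranges to $\delta_L(p)\,q - p\,\delta_L(q) = na\,pq$, whence $q \mid p\,\delta_L(q)$ in the UFD $K[w]$. Coprimality of $p, q$ then forces $q \mid \delta_L(q)$.

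The decisive observation is that since $\delta_L(w) = 1$, the derivation $\delta_L$ acts on $K[w]$ by ``apply $\delta$ to coefficients and add the formal $w$-derivative'', and hence never raises degree. Combined with $q \mid \delta_L(q)$, this forces the ratio $\delta_L(q)/q$ to be a polynomial of degree at most $0$, i.e., $\delta_L(q) = cq$ for some $c \in K$. Substituting back produces $\delta_L(p) = (c+na)\,p$. I expect this polynomial-theoretic step, producing the scalar $c \in K$, to be the only real obstacle; everything on either side of it is bookkeeping.

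The final step is to read off leading coefficients. If $b_m$ and $b'_{m'}$ denote the leading coefficients of $p$ and $q$, comparing top-degree terms yields $\delta(b'_{m'}) = c\,b'_{m'}$ and $\delta(b_m) = (c+na)\,b_m$. Setting $u := b_m/b'_{m'} \in K^{*}$, the logarithmic-derivative computation
$$\frac{\delta(u)}{u} \;=\; \frac{\delta(b_m)}{b_m} - \frac{\delta(b'_{m'})}{b'_{m'}} \;=\; (c+na) - c \;=\; na$$
exhibits $u \in K^{*}$ as a nonzero $K$-solution of $\delta(y) = nay$, so $a \in \mathcal{A}_K$, completing the proof.
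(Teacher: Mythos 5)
Your proof is correct. Both inclusions are handled as in the paper, and the nontrivial direction rests on the same core fact: because $\delta_L(w)=1$, the derivation $\delta_L$ acts on $K[w]$ by differentiating coefficients plus the formal $w$-derivative and hence never raises degree, so leading coefficients of a polynomial identity can be read off and yield a $K$-relation.

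The only divergence from the paper is the intermediate coprimality/divisibility step. You pass to a coprime representation $y = p/q$, deduce $q \mid \delta_L(q)$ from the relation $\delta_L(p)q - p\,\delta_L(q) = na\,pq$, conclude $\delta_L(q)=cq$ for some $c \in K$ from the degree bound, and then separately obtain $\delta_L(p) = (c+na)p$ before comparing leading coefficients. The paper avoids this detour: it takes an arbitrary representation $y = f/g$ (no coprimality needed), cross-multiplies to $\delta_L(f)g - f\,\delta_L(g) = naf g$, and compares the coefficient of $w^{\deg f + \deg g}$ directly, which immediately produces $\delta(f_\ell)g_m - f_\ell\delta(g_m) = na\,f_\ell g_m$ and hence $\delta(f_\ell g_m^{-1}) = na\,f_\ell g_m^{-1}$. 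Your route buys the structural observation that $p$ and $q$ are individually $\delta_L$-eigenvectors (which is a pleasant fact, and recovers the paper's conclusion once you take logarithmic derivatives of the leading coefficients), but the extra step is not necessary: the paper's single coefficient comparison on the cross-multiplied equation already closes the argument, coprime or not.
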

\begin{proof}
Clearly $\mathcal A_K\subseteq \mathcal A_L \cap K$.
For the other inclusion, let $a\in \mathcal A_L\cap K$. Let $v\in L$ and $n\in \mathbb N$ be such that $\delta_L(v)=nav$. We write $v=fg^{-1}$, where $f=\sum_{i=0}^{\ell}f_i w^i \in K[w]$ and $g=\sum_{j=0}^{m}g_j w^j \in K[w]$ are polynomials with $f_{\ell} \neq 0$ and $g_m \neq 0$.
From $\delta_L(v)=nav$, we have
$\delta_L(f)g-f\delta_L(g)=nafg$.
Substituting $f=\sum_{i=0}^{\ell}f_i w^i, ~g=\sum_{j=0}^{m}g_j w^j$
and comparing coefficients of $w^{l+m}$, we get
$\delta(f_{\ell})g_m-f_{\ell}\delta(g_m)=naf_{\ell}g_m$.
Consequently, $\delta(f_{\ell}g_m^{-1})=na (f_{\ell}g_m^{-1})$.
Thus $f_{\ell}g_m^{-1}\in K$ is a solution of equation $\delta(y)=nay$, hence $a\in \mathcal A_K$.
\end{proof}

The next corollary is a counterpart of Corollary \ref{trdeg-and-rank-AS-prime}.

\begin{corollary}\label{trdeg-and-AS-prime-and-derivative-1}
Let $(K, \delta)$ be a differential field such that $\delta(w)\neq 1$ for every $w \in K$. Let $S \subseteq K\backslash\{0\}$
be a finite set. Let $(E, \delta_E)$ be a differential extension of $(K, \delta)$ such that for each $a \in S \backslash\{0\}$, the field $E$ contains solutions of the set of equations
\begin{equation}
\delta(y_1) = ay_1, \delta(y_2)=y_1+ay_2.
\end{equation}
Then ${\rm trdeg}_K(E) \geq {\rm rank}(\mathcal A_S') + 1$.
\end{corollary}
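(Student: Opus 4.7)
The plan is to extract one extra unit of transcendence from the presence of the second equation $\delta(y_2)=y_1+ay_2$ and then invoke Corollary \ref{trdeg-and-rank-AS-prime} over an enlarged base field. First I would fix any $a \in S$ and let $y_1,y_2 \in E$ denote the given solutions (with $y_1$ nonzero). Setting $u = y_2 y_1^{-1} \in E$ and differentiating, one checks that $\delta_E(u) = 1$. So $E$ contains an element $u$ with $\delta_E(u)=1$.

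Next I would argue that $u$ must be transcendental over $K$. If $u$ were algebraic over $K$, then by Corollary \ref{derivative_1_in_alg_ext} there would exist $v \in K$ with $\delta(v)=1$, contradicting the standing hypothesis $\delta(w)\neq 1$ for all $w \in K$. Therefore $L := K(u)$ is a purely transcendental simple extension of $K$ sitting inside $E$, and $\mathrm{trdeg}_K(L)=1$. Extend $\delta$ to $\delta_L$ on $L$ in the (forced) way with $\delta_L(u)=1$; this is consistent with the restriction of $\delta_E$ to $L$.

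Now I would view $E$ as a splitting field of $S$ over the new base $(L, \delta_L)$: for each $a \in S$, the equation $\delta(y)=ay$ still has a nonzero solution (namely $y_1$) in $E$. To apply Corollary \ref{trdeg-and-rank-AS-prime} over $L$, I need to check that the groups $\mathcal A_S$ and $\mathcal A_S'$ (which a priori depend on the base field) are unchanged when we enlarge the base from $K$ to $L$. This is precisely where Lemma \ref{A_K-and-A_K(w)} enters: it gives $\mathcal A_L \cap K = \mathcal A_K$. Since $\langle S \rangle \subseteq K$, intersecting with $\langle S \rangle$ yields $\mathcal A_L \cap \langle S \rangle = \mathcal A_K \cap \langle S \rangle = \mathcal A_S$, so any $\mathbb Z$-complement $\mathcal A_S'$ of $\mathcal A_S$ in $\langle S \rangle$ serves for both base fields.

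Finally I would conclude by additivity of transcendence degree: Corollary \ref{trdeg-and-rank-AS-prime}, applied to the splitting field $(E,\delta_E)$ of $S$ over $(L,\delta_L)$, gives $\mathrm{trdeg}_L(E) \geq \mathrm{rank}(\mathcal A_S')$, and hence
\begin{equation*}
\mathrm{trdeg}_K(E) \;=\; \mathrm{trdeg}_K(L) + \mathrm{trdeg}_L(E) \;\geq\; 1 + \mathrm{rank}(\mathcal A_S').
\end{equation*}
The only subtle step is the invariance $\mathcal A_S^{(L)} = \mathcal A_S^{(K)}$, which requires precisely the content of Lemma \ref{A_K-and-A_K(w)}; everything else is either the construction of the transcendental element $u$ from $(y_1,y_2)$ or a direct appeal to results already proved.
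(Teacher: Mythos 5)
Your proposal is correct and follows essentially the same route as the paper: produce an element of $E$ with derivative $1$ (the paper quotes Lemma \ref{derivative_1}, you compute $u=y_2y_1^{-1}$ directly, which is the same computation), note it is transcendental over $K$ via Corollary \ref{derivative_1_in_alg_ext} and the hypothesis on $K$, pass to $L=K(u)$, use Lemma \ref{A_K-and-A_K(w)} to see that $\mathcal A_S$ and hence ${\rm rank}(\mathcal A_S')$ are unchanged over $L$, and finish with Corollary \ref{trdeg-and-rank-AS-prime} plus additivity of transcendence degree. No gaps beyond the implicit (and shared with the paper) assumption that the given solutions are nonzero.
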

\begin{proof}
Since $(E, \delta)$ is a splitting field of $S$, for each $a \in S$ the field $E$ contains solutions of the equations $\delta(y_2) = y_1 + ay_2$. Thus, by Lemma \ref{derivative_1}, there exists a transcendental element $w \in E$ such that $\delta_E(w) = 1$. Let $L = K(w)$ and $\delta_L$ be the restriction of $\delta_E$ to $L$.
Since $K$ does not contain an element whose derivative is $1$, by Lemma \ref{A_K-and-A_K(w)}, $\mathcal A=\mathcal A_K=\mathcal A_L \cap K$. Taking intersection with $S$ and noting that $S \subseteq K$, we obtain $\mathcal A_S = (\mathcal A_L)_S$. Therefore
${\rm rank} (\mathcal A_S') = {\rm rank}((\mathcal A_L)'_S)$.
Now, by Corollary \ref{trdeg-and-rank-AS-prime}, ${\rm trdeg}_L(E) \geq {\rm rank}((\mathcal A_L)_S') = {\rm rank}(\mathcal A_S')$. 
Thus, ${\rm trdeg}_K(E) = {\rm trdeg}_L(E) + {\rm trdeg}_K(L) \geq {\rm rank}(\mathcal A_S') + {\rm trdeg}_K(L) = {\rm rank}(\mathcal A_S') + 1$.
\end{proof}

\section{Finite Splitting Fields of Differential Central Simple Algebras}\label{section-algebraic-extensions-splitting-dcsa}
Throughout this section, $K$ denotes the field $F(t)$, where $F$ is a field of characteristic $0$ and
$\delta$ denotes the derivation on $K$ determined by $\delta(t)=t$ and $\delta(a) = 0$ for each $a \in F$.
In this section, we discuss the splitting of
differential matrix algebras $(M_n(K), \mathcal{D}_P)$ over $(K, \delta)$, where $P$ is a traceless matrix over $K$ and $\mathcal{D}_P = \delta^c + \partial_P$ as in \S\ref{section-preliminaries}. First, we shall look at the case when $P \in M_n(F)$, and later consider other cases. Following lemma will be useful in proofs of main theorems.

\begin{lemma}\label{F-intersection-A-is-Q}
Let  
$\mathcal A = \{a \in K : \delta(y)=nay \text{ has a nonzero solution in } K \text{ for some } n\in \mathbb N\}$. 
Then $F \cap \mathcal A = \mathbb Q$. 
\end{lemma}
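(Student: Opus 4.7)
The containment $\mathbb{Q} \subseteq F \cap \mathcal{A}$ is immediate: for $a = p/q$ with $p \in \mathbb{Z}$ and $q \in \mathbb{N}$, the element $y = t^p \in K \setminus \{0\}$ satisfies $\delta(y) = p t^p = q a y$, witnessing that $a \in \mathcal{A}$. So the real content is the reverse inclusion $F \cap \mathcal{A} \subseteq \mathbb{Q}$.

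For this, I would take $a \in F \cap \mathcal{A}$ together with a nonzero $v \in K$ and $n \in \mathbb{N}$ such that $\delta(v) = n a v$, and exploit the UFD structure of $F[t]$ to write
\[
v \;=\; c \cdot t^m \cdot \prod_{i} p_i(t)^{e_i},
\]
where $c \in F^{\times}$, $m \in \mathbb{Z}$, the $p_i(t)$ are distinct monic irreducible polynomials in $F[t]$ with $p_i(0) \neq 0$, and $e_i \in \mathbb{Z}$. Taking the logarithmic derivative and using $\delta(c) = 0$, $\delta(t)/t = 1$, one obtains
\[
na \;=\; \frac{\delta(v)}{v} \;=\; m \;+\; \sum_i e_i \cdot \frac{t\, p_i'(t)}{p_i(t)}.
\]
The strategy is then to show that the only way the right-hand side can lie in $F$ is for all $e_i$ to vanish, which forces $na = m$ and hence $a = m/n \in \mathbb{Q}$.

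The core of the argument — and the step I expect to be the main technical point — is the pole analysis of $t p_i'(t)/p_i(t)$. Writing $p_i(t) = t^{d_i} + a_{d_i-1} t^{d_i-1} + \cdots + a_0$ with $a_0 \neq 0$ and $d_i \geq 1$, polynomial division gives
\[
\frac{t\, p_i'(t)}{p_i(t)} \;=\; d_i \;-\; \frac{a_{d_i-1} t^{d_i-1} + 2a_{d_i-2} t^{d_i-2} + \cdots + d_i a_0}{p_i(t)},
\]
and the constant term $d_i a_0$ of the numerator is nonzero in characteristic $0$, so this fractional part is a nonzero proper rational function with denominator the irreducible polynomial $p_i(t)$. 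Uniqueness of partial fraction decomposition in $F(t)$ over the distinct monic irreducibles $p_i(t)$ then implies that $\sum_i e_i \cdot t p_i'(t)/p_i(t) - \sum_i e_i d_i$ is a sum of proper fractions with pairwise coprime denominators, which can only belong to $F$ (in fact, equal zero) if each $e_i = 0$.

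Once $e_i = 0$ for all $i$, we have $v = c t^m$ and $\delta(v)/v = m$, giving $na = m$ and therefore $a \in \mathbb{Q}$, completing the proof.
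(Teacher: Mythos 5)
Your proposal is correct, but it takes a different route from the paper's own argument. The paper does not factor $v$ at all: it writes $v=f/g$ with $f,g\in F[t]$, clears denominators to get $\delta(f)g-f\delta(g)=nafg$, and compares the coefficients of $t^{i+j}$, where $t^i$ and $t^j$ are the lowest-degree nonvanishing terms of $f$ and $g$; this immediately yields $(i-j)a_ib_j=na\,a_ib_j$, hence $a=(i-j)/n\in\mathbb Q$. In effect the paper only looks at the order of vanishing at $t=0$, which is enough because every nonzero candidate gets detected there. Your argument instead uses the full UFD structure of $F[t]$: you take the logarithmic derivative of the factorization $v=c\,t^m\prod_i p_i^{e_i}$, observe that $t\,p_i'/p_i$ has a nonzero proper fractional part with denominator $p_i$ (its numerator has constant term $d_ia_0\neq 0$, using characteristic $0$), and invoke uniqueness of partial fractions to force every $e_i=0$, so $na=m$. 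Both proofs are sound; the paper's is shorter and more elementary (no factorization or partial-fraction uniqueness needed), while yours is more structural — it identifies $na$ as $\mathrm{ord}_{t=0}(v)$ and makes visible why only the factor $t$ can contribute, which is in the spirit of the description of $\mathcal A$ as the $\mathbb Q$-span of logarithmic derivatives. Your easy inclusion $\mathbb Q\subseteq F\cap\mathcal A$ via $y=t^p$, $\delta(t^p)=pt^p=qa\,t^p$, matches the paper's observation that $1\in\mathcal A$ and $\mathcal A$ is a $\mathbb Q$-vector space.
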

\begin{proof}
Let $a \in \mathbb Q$. Since $\delta(t) = t$ and $\mathcal A$ is a 
$\mathbb Q$-vector space, $\mathbb Q \subseteq \mathcal A$.
Since ${\rm char}(F) = 0$, $\mathbb Q \subseteq F \cap \mathcal A$.

Now, suppose $a \in F \cap \mathcal A$. Then there exist $f, g\in F[t]$; $g \neq 0$, such that $\delta(f/g)=naf/g$ for some $n \in \mathbb N$. We rewrite it as
$$\frac{\delta(f)g-f\delta(g)}{g^2} = \frac{naf}{g}.$$

Let $f = a_{\ell}t^{\ell}+\dots+a_1t+a_0$ and $g=b_mt^m+\dots+b_1t+b_0$, where $a_i, b_j\in F$. Then $\delta(f) = {\ell}a_{\ell}t^{\ell}+\dots+a_1t$ and $\delta(g) = mb_mt^m+\dots+b_1t$.
Substituting it in the above equation,
\begin{equation*}
\begin{split}
({\ell}a_{\ell}t^{\ell}+\dots+a_1t)(b_mt^m+\dots+b_1t+b_0)-(a_{\ell}t^{\ell}+\dots+a_1t+a_0)(mb_mt^m+\dots+b_1t)\\
=na (a_{\ell}t^{\ell}+\dots+a_1t+a_0)(b_mt^m+\dots+b_1t+b_0).
\end{split}
\end{equation*}
Let $i$ be the smallest index such that $a_i\neq 0$ and $j$ be the smallest index such that $b_j \neq 0$. Comparing coefficients of $t^{i+j}$,
$$ia_ib_j-ja_ib_j=na a_ib_j.$$
Thus $a = (i-j)/n \in \mathbb Q$.
\end{proof}

\begin{theorem}\label{split-by-finite}
Let $P\in M_n(F)$ be a traceless matrix. The differential matrix algebra $(M_n(K) , \mathcal{D}_P)$ admits a finite splitting field if and only if $P$ is diagonalizable and every eigenvalue of $P$ is rational.
\end{theorem}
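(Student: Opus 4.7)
The plan is to reduce to the Jordan form of $P$ over $\overline{F}$ and handle each Jordan block separately, using the machinery of Section 3.

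\textbf{Sufficiency.} If $P$ is diagonalizable with every eigenvalue rational, then all eigenvalues lie in $\mathbb Q \subseteq F$, so $P$ is already diagonalizable over $F$: say $P = Q\,\mathrm{diag}(d_1,\ldots,d_n)\,Q^{-1}$ with $Q \in \mathrm{GL}_n(F) \subseteq \mathrm{GL}_n(C_K)$ and $d_i = p_i/q_i \in \mathbb Q$. By Lemma \ref{similar-matrices}, it suffices to split $\delta^c(Y) = \mathrm{diag}(d_1,\ldots,d_n)\,Y$ over a finite extension; setting $N = \mathrm{lcm}(q_1,\ldots,q_n)$, the diagonal matrix $Y = \mathrm{diag}(t^{d_1},\ldots,t^{d_n}) \in \mathrm{GL}_n(K(t^{1/N}))$ solves the system (using $\delta(t^{d}) = d\,t^{d}$ for $d \in \mathbb Q$, which follows from $\delta(t)=t$). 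Corollary \ref{trivialize} then provides the finite splitting field $E = K(t^{1/N})$.

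\textbf{Necessity.} Suppose a finite splitting field $(E,\delta_E)$ exists. Replacing $E$ by the compositum $E' = E\cdot\overline{F}$, which is still a finite (hence algebraic) extension of $K' = \overline{F}(t)$, I may work over the algebraic closure and write $P = QJQ^{-1}$ with $J$ in Jordan form and $Q \in \mathrm{GL}_n(\overline{F}) \subseteq \mathrm{GL}_n(C_{K'})$. Lemma \ref{similar-matrices} yields a solution $Y \in \mathrm{GL}_n(E')$ of $\delta^c(Y) = J^t Y$. Reading $Y$ column by column block by block, each Jordan block of $J$ of size $k$ with eigenvalue $\lambda$ produces a nonzero solution in $E'$ of
\begin{equation*}
\delta(y_1) = \lambda y_1, \qquad \delta(y_i) = y_{i-1} + \lambda y_i \ \text{ for }\ 2 \le i \le k.
\end{equation*}

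\textbf{Block analysis.} If any block has size $k \ge 2$, Lemma \ref{derivative_1} forces the existence of $w \in E'$ with $\delta_{E'}(w) = 1$, and Corollary \ref{derivative_1_in_alg_ext} descends this to $w \in K' = \overline{F}(t)$. Writing $w = f/g$ with $f,g \in \overline{F}[t]$ coprime and comparing the constant terms of $\delta(f)g - f\delta(g) = g^2$ — noting that $\delta$ preserves the ideal $(t) \subseteq \overline{F}[t]$ so the left-hand side has vanishing constant term, while after shifting to assume $g(0)\ne 0$ the right-hand side does not — yields a contradiction. Hence every Jordan block has size $1$, so $P$ is diagonalizable. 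For each eigenvalue $\lambda$, the nonzero algebraic solution of $\delta(y) = \lambda y$ in $E'$ forces $\lambda \in \mathcal A_{K'}$ by the same algebraic-element computation used in the proof of Theorem \ref{transcendental-iff-AS-prime-nonzero}(ii). Since $\lambda \in \overline{F}$, the proof of Lemma \ref{F-intersection-A-is-Q} applies verbatim over $\overline{F}$ and gives $\lambda \in \mathbb Q$. I anticipate the principal obstacle to be the clean descent from $\overline{F}$ to $F$ (to ensure that a hypothetical finite splitting over $K$ produces the algebraic splitting over $K'$ one needs) and the polynomial argument ruling out a primitive of $1$ inside any algebraic extension of $K$; the latter is what actually forces diagonalizability, since all other ingredients are prepared in Section 3.
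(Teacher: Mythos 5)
Your proposal follows essentially the same route as the paper: reduce to a Jordan/diagonal form via Lemma \ref{similar-matrices} and Corollary \ref{trivialize}, pass to $\overline{F}(t)$ by a compositum argument, rule out Jordan blocks of size $\geq 2$ with Lemma \ref{derivative_1} and Corollary \ref{derivative_1_in_alg_ext}, and force rationality of the eigenvalues via the minimal-polynomial computation behind Theorem \ref{transcendental-iff-AS-prime-nonzero}(ii) together with Lemma \ref{F-intersection-A-is-Q}; your sufficiency direction just replaces the paper's appeal to Theorem \ref{transcendental-iff-AS-prime-nonzero}(i) by the explicit solution $\mathrm{diag}(t^{d_1},\dots,t^{d_n})$ over $K(t^{1/N})$, which is fine.

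The one step that does not work as written is your verification that no $w\in\overline{F}(t)$ satisfies $\delta(w)=1$: ``shifting to assume $g(0)\neq 0$'' is not available, since a substitution $t\mapsto t+a$ is not compatible with the derivation $\delta(t)=t$. The fact itself is true (the paper simply asserts the analogous statement for $K$): if $t\nmid g$ your constant-term comparison in $\delta(f)g-f\delta(g)=g^2$ works, and if $t^s\,\Vert\, g$ with $s\geq 1$ then, since $t\mid\delta(f)$ and $t^s\,\Vert\,\delta(g)$ while $t\nmid f$, the left side has order exactly $s$ at $t=0$ and the right side has order $2s$, a contradiction. With that repair (or by quoting the paper's assertion), your argument is complete.
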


\begin{proof}
We first assume that $P$ is diagonalizable, and every eigenvalue of $P$ is in $\mathbb Q$. 
Let $D = {\rm diag}(\lambda_1, \lambda_2, \cdots, \lambda_n)$ be 
a diagonal matrix with eigenvalues of $P$, with multiplicity, as diagonal entries. 
Let $S=\{\lambda_1,\dots,\lambda_n\}$ and $$\mathcal A=\{a \in K : \delta(y) = nay \text{ has a nonzero solution in } K \text{ for some } n \in \mathbb N\}.$$
Since $\delta(t) = t$, by Lemma \ref{F-intersection-A-is-Q} $S \subseteq \mathcal A$. Hence $\mathcal{A}_S'=0$. We recall that $\mathcal A_S'$ is determined by the direct sum
$\mathcal A \cap \langle S \rangle + \mathcal A_S' = \langle S \rangle$.
Thus by Theorem \ref{transcendental-iff-AS-prime-nonzero}, there exists a splitting field $E$ of $S$ such that $E/K$ is finite.
Thus $\delta(y) = \lambda_iy$ has a solution $u_i\in E \setminus \{0\}$, for each $i$. Consequently, the equation $\delta^c(Y)=D Y$ has a solution ${\rm diag}(u_1,u_2,\dots, u_n) \in {\rm GL}_n(E)$, where
$Y=(y_{ij})$ is an $n \times n$ matrix of indeterminates. Now, from Lemma \ref{similar-matrices}, the equation $\delta^c(Y)=P^tY$  has a solution
in ${\rm GL}_n(E)$. By Corollary \ref{trivialize}, the extension $E/K$ splits 
$(M_n(K), \mathcal D_P)$.

Conversely, suppose that $P$ is not diagonalizable or there is an eigenvalue of $P$ that is not in $\mathbb{Q}$. We consider two cases.

\noindent \textbf{Case 1 :} \emph{Each eigenvalue $\lambda_i$ of $P$ is rational.}

Let $(E,\delta_E)$ be a finite differential extension of $(K,\delta)$.
Let $J = {\rm diag}(J_1, J_2, \cdots, J_k)$ be a Jordan canonical form of $P$, where $J_i$ is an upper triangular Jordan block.
As $P$ is not diagonalizable, at least one $J_i$ has its length strictly greater than $1$. We assume that the length of $J_1$ is 
$\ell > 1$ and that the eigenvalue that corresponds to $J_1$ is $\lambda_1$.

Since $E/K$ is finite and there does not exist an element $v\in K$ such that $\delta(v)=1$, by Corollary \ref{derivative_1_in_alg_ext} there does not exist $w \in E$ such that $\delta_E(w) = 1$. By Lemma \ref{derivative_1}, the equations $\delta(y_1)=\lambda_1 y_1$ and $\delta(y_2)=y_1+\lambda_1y_2$ do not have solutions in $E$.

Thus the equation $\delta^c(Y)=J^t Y$ has no solution over $E$, where
$Y=(y_{ij})$ is an $n \times n$ matrix of indeterminates. Now, from Lemma \ref{similar-matrices}, the equation $\delta^c(Y)=P^tY$  has no solution
over $E$. By Corollary \ref{trivialize}, the extension $E$ does not split 
$(M_n(K), \mathcal D_P)$. 

\noindent \textbf{Case 2 :} \emph{At least one eigenvalue of $P$ is not rational}. 

Let $\overline F$ be an algebraic closure of $F$. Let $S \subseteq \overline{F}$ be the set of eigenvalues of $P$. Suppose $(E,\delta_E)$ is a finite differential extension of $(\overline F(t),\delta_{\overline F(t)})$.
Let
$$\mathcal A=\{a \in \overline F(t) : \delta(y) = nay \text{ has a nonzero solution in } \overline F(t) \text{ for some } n \in \mathbb N\}.$$
By Lemma \ref{F-intersection-A-is-Q}, $\overline F \cap \mathcal A = \mathbb Q$. Thus, if $\lambda$ is a nonrational eigenvalue of $P$, then $\lambda \notin \mathcal A$. Therefore, $\mathcal A_S' \neq 0$. 
Since $E$ is a finite extension over $\overline{F}(t)$,
by Theorem \ref{transcendental-iff-AS-prime-nonzero}, the equation $\delta(y) = \lambda y$ does not have a solution in $E$.
Consequently, the equation $\delta^c(Y)=J^t Y$ has no solution over $E$, where $J$ is a Jordan canonical form of $P$ and $Y=(y_{ij})$ is an $n \times n$ matrix of indeterminates. Now, from Lemma \ref{similar-matrices}, the equation $\delta^c(Y)=P^tY$  has no solution over  $E$.
Therefore, by Corollary \ref{trivialize}, $(E, \delta_E)$ does not split $(M_n(K), \mathcal D_P)$.

Now, if $(L, \delta_L)$ is a finite extension of $(K,\delta)$ that
splits $(M_n(K), \mathcal D_P)$, then the compositum of $(L, \delta_L)$
and $(\overline{F}(t),\delta_{\overline{F}(t)})$ is a finite extension of $(\overline{F}(t),\delta_{\overline{F}(t)})$ splitting 
$(M_n(K), \mathcal D_P)$, which is a contradiction.
Thus a finite extension of $(K,\delta)$ does not split $(M_n(K), \mathcal D_P)$.
\end{proof}

\begin{corollary}\label{deg_E}
Let $P\in M_n(F)$ be a traceless matrix. If the differential matrix algebra $(M_n(K) , \mathcal{D}_P)$ admits a finite splitting field $(E, \delta_E)$ over $(K, \delta)$, then 
${\rm deg}_K(E) \geq [\langle S \cup \{1\} \rangle:\mathbb Z]$, where $S \subseteq \mathbb Q$ is the set of eigenvalues of $P$.
\end{corollary}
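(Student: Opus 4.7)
The plan is to extract from the splitting datum a distinguished element $v \in E^\times$ whose $N$-th power is a nonzero scalar multiple of $t$, with $N := [\langle S \cup \{1\}\rangle : \mathbb Z]$, and then apply Eisenstein's criterion to force $[E:K] \geq N$.

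First, Theorem \ref{split-by-finite} implies that $P$ is diagonalizable over $F$ with every eigenvalue in $\mathbb Q$, so $S \subseteq \mathbb Q$ is well-defined. The group $\langle S \cup \{1\}\rangle$ is a finitely generated subgroup of $(\mathbb Q,+)$ containing $1$, hence cyclic of the form $\tfrac{1}{N}\mathbb Z$; in particular there exist integers $n_0, m_1, \ldots, m_n$ such that $\tfrac{1}{N} = n_0 + m_1\lambda_1 + \cdots + m_n\lambda_n$. Next, fix $J = \mathrm{diag}(\lambda_1,\ldots,\lambda_n)$ and $Q \in \mathrm{GL}_n(F) \subseteq \mathrm{GL}_n(C_K)$ with $Q^{-1}PQ = J$. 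By Corollary \ref{trivialize} applied to the splitting $(E,\delta_E)$, together with Lemma \ref{similar-matrices}, the equation $\delta^c(Y) = JY$ admits a solution $Z' \in \mathrm{GL}_n(E)$. Entrywise $\delta_E(z'_{ij}) = \lambda_i z'_{ij}$, and invertibility of $Z'$ forces each row to contain a nonzero entry, yielding $u_i \in E^\times$ with $\delta_E(u_i) = \lambda_i u_i$.

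Set $v := t^{n_0}\prod_{i=1}^n u_i^{m_i} \in E^\times$. A direct logarithmic-derivative computation gives $\delta_E(v) = \tfrac{1}{N} v$, hence $\delta_E(v^N/t) = 0$, and therefore $v^N = c\,t$ for some $c \in C_E^\times$ (nonzero since $v \neq 0$). Letting $L := C_E(t) \subseteq E$, the polynomial $x^N - ct \in C_E[t][x]$ is Eisenstein at the prime ideal $(t)$ of $C_E[t]$ (its non-leading coefficients all lie in $(t)$, and the constant term $-ct$ lies in $(t)\setminus(t)^2$ because $c \neq 0$), so it is irreducible over $L$. Consequently $[L(v):L] = N$, and from the tower $K = F(t) \subseteq L \subseteq L(v) \subseteq E$ we obtain
\[
[E:K] \;\geq\; [L(v):K] \;=\; [L(v):L]\cdot[L:K] \;\geq\; N \cdot 1 \;=\; N.
\]

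The only substantive step is the Eisenstein argument combined with the construction of $v$; the remainder is a matter of recognizing that finitely generated subgroups of $\mathbb Q$ containing $1$ are cyclic of the form $\tfrac{1}{N}\mathbb Z$, and invoking machinery already set up in Section \ref{section-preliminaries} and Section \ref{splitting-fields-of-S}. I do not anticipate serious obstacles beyond bookkeeping.
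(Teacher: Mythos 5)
Your proof is correct and follows essentially the same route as the paper: extract a solution of $\delta(y) = \tfrac{1}{N}y$ inside $E$, observe that its $N$-th power differs from $t$ by a nonzero constant, and apply Eisenstein at the prime $(t)$ to force $[E:K] \geq N$. The only differences are cosmetic---you build the solution $v$ explicitly as $t^{n_0}\prod u_i^{m_i}$ rather than invoking the splitting-field machinery of \S\ref{splitting-fields-of-S}, and you run Eisenstein over $C_E(t)$ rather than the paper's $K(c)=F(c)(t)$ (which tacitly uses that $t$ stays transcendental over $C_E$, equivalently that $C_E/F$ is algebraic since $E/K$ is; the paper gets this from \cite[Lemma 3.19]{Magid-Book}, and you should make it explicit before treating $C_E[t]$ as a polynomial ring).
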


\begin{proof}
Let $S \subseteq \overline{F}$ be the set of eigenvalues of $P$.
Since $(M_n(K), \mathcal{D}_P)$ admits a finite splitting field,
by Theorem \ref{split-by-finite},  $S \subseteq \mathbb Q$. Thus, $\langle S \rangle \subseteq \mathbb Q$ is a cyclic group.
Let $q \in \mathbb N$ be such that $1/q$ generates $\langle S \rangle$. If $(E,\delta_E)$ is a finite extension of $(K, \delta)$ that splits $(M_n(K), \mathcal{D}_P)$, the equation $\delta(y) = (1/q)y$ has a solution $z \in E$. Thus $\delta_E(z^q)=z^q$ and consequently, $z^q=ct$ for some $c\in C_E$. Since $E/K$ is algebraic, by \cite[Lemma 3.19]{Magid-Book}, $c$ is algebraic over $C_K = F$.

Let $I$ be the ideal of $F(c)[t]$ generated by $t$.
Then $ct \in I$ but $ct \notin I^2$ and by generalized Eisenstein criterion, the polynomial $X^q - ct \in K(c)[X]$ is irreducible. 
Therefore, ${\rm deg}_K(E) \geq {\rm deg}_{K(c)}(E) \geq q = [\langle S \cup \{1\}\rangle:\mathbb Z]$.
\end{proof}

Let us examine $n = 2$ case. If $P \in M_2(F)$ is a traceless matrix, then $(M_2(K), \mathcal D_P)$ is split by a finite extension of $(K, \delta)$ if and only if $-\det(P)$ is a square in $\mathbb Q^*$. Further, in that case, if $-\det(P)=p^2/q^2$ is in reduced fractional form, then there exists a finite extension of degree $q$ that splits $(M_2(K), \mathcal D_P)$. We observe that $q = [\langle S \cup \{1\}\rangle : \mathbb Z]$, where $S = \{-p/q, p/q\}$. More generally, in the above corollary the lower bound $[\langle S \cup \{1\}\rangle : \mathbb Z]$ on ${\rm deg}_K(E)$ is sharp, which is attained when $c \in K$ and $E = K(z)$. The following corollary can be derived from $q = 1$ case.

\begin{corollary}\label{integer eigenvalues}
Let $P\in M_n(F)$ be a traceless matrix and $(A, \mathcal{D}_P)$ be the
differential matrix algebra of degree $n$ over $K$. The following are equivalent.
\begin{enumerate}
\item[(i).] $(A, \mathcal{D}_P)$ is split.
\item[(ii).] $(A, \mathcal{D}_P)$ is split over 
$(\overline{F}(t), \delta_{\overline{F}(t)})$.
\item[(iii).] $P$ is diagonalizable and all the eigenvalues of $P$ are in $\mathbb{{Z}}$. 
\end{enumerate}
\end{corollary}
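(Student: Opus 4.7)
The plan is to prove the cycle $(i) \Rightarrow (ii) \Rightarrow (iii) \Rightarrow (i)$. The first implication is immediate: a split differential matrix algebra remains split after any differential base change, in particular to $(\overline{F}(t), \delta_{\overline{F}(t)})$, so $(i) \Rightarrow (ii)$.

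For $(ii) \Rightarrow (iii)$, I would regard a splitting over $(\overline{F}(t), \delta_{\overline{F}(t)})$ as a finite splitting (the trivial extension $\overline{F}(t)/\overline{F}(t)$). Applying Theorem \ref{split-by-finite} with $\overline{F}$ in place of $F$ — a legitimate substitution since its proof uses only characteristic zero and the relation $\delta(t)=t$ — forces $P$ to be diagonalizable with rational eigenvalues. Then I would invoke Corollary \ref{deg_E}, again with $\overline{F}$ playing the role of $F$ and $E = \overline{F}(t)$, to obtain $1 = [\overline{F}(t):\overline{F}(t)] \geq [\langle S\cup\{1\}\rangle : \mathbb{Z}]$, where $S \subseteq \mathbb{Q}$ is the eigenvalue set of $P$. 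Hence $\langle S \cup \{1\}\rangle = \mathbb{Z}$, which forces every eigenvalue of $P$ to lie in $\mathbb{Z}$.

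For $(iii) \Rightarrow (i)$, I would produce the splitting by hand. Since $P$ is diagonalizable, write $P = H^{-1} D H$ with $H \in {\rm GL}_n(F) = {\rm GL}_n(C_K)$ and $D = {\rm diag}(n_1,\dots,n_n)$, $n_i \in \mathbb{Z}$. By Corollary \ref{trivialize}, splitting $(A,\mathcal{D}_P)$ over $(K,\delta)$ is equivalent to solving $\delta^c(Y) = P^t Y$ in ${\rm GL}_n(K)$. Lemma \ref{similar-matrices} reduces this to solving $\delta^c(Y) = D^t Y = D Y$ in ${\rm GL}_n(K)$, and the diagonal matrix
\[
Y = {\rm diag}(t^{n_1}, t^{n_2}, \dots, t^{n_n}) \in {\rm GL}_n(K)
\]
provides an explicit solution, since $\delta(t^{n_i}) = n_i t^{n_i}$ and each $t^{n_i}$ is a unit in $K = F(t)$.

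No step presents a genuine obstacle; the corollary is essentially an assembly of Theorem \ref{split-by-finite}, Corollary \ref{deg_E}, Corollary \ref{trivialize}, and Lemma \ref{similar-matrices}, with the explicit solution ${\rm diag}(t^{n_1},\dots,t^{n_n})$ witnessing the reverse implication. The only point requiring minor care is verifying that the preceding theorems apply verbatim with $F$ replaced by its algebraic closure $\overline{F}$, which holds because their arguments depend only on characteristic-zero considerations and the fixed behavior $\delta(t)=t$, $\delta|_{\overline{F}} = 0$.
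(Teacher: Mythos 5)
Your proposal is correct and follows essentially the route the paper intends: the paper derives this corollary as the $q=1$ case of Corollary \ref{deg_E} together with Theorem \ref{split-by-finite}, which is exactly your assembly of \ref{split-by-finite}, \ref{deg_E}, \ref{trivialize} and \ref{similar-matrices}, with ${\rm diag}(t^{n_1},\dots,t^{n_n})$ as the explicit witness for $(iii)\Rightarrow(i)$. Your extra care in checking that the cited results apply verbatim over $\overline{F}$ (and that $H$ can be taken in ${\rm GL}_n(F)=\ {\rm GL}_n(C_K)$ because the integer eigenvalues lie in $F$) fills in precisely the details the paper leaves implicit.
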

\subsection{Finite splitting of $(A,\mathcal{D}_P)$ when $\delta^c(P)\neq 0$}

In this section also, we assume the field $(K,\delta)$ to be $(F(t),\delta)$ such that $\delta(a)=0$ for every $a\in F$ and $\delta(t)=t$. Let $(M_n(K),\mathcal{D}_P)$ be the differential matrix algebra, where $\delta^c(P)\neq 0$, i.e., 
at least one entry of $P$ is a nonconstant element in $F(t)$.

In the following theorem, we find some conditions on $P$ which ensure that $(M_n(K), \mathcal D_P)$ is split by a finite extension.

\begin{theorem}\label{splitting-of-non-constant-matrices}
Let $P\in M_n(K)$ be a traceless matrix.
Then $(M_n(K) , \mathcal{D}_P)$ is split by a finite extension if $P$ is of one of the following forms.

\begin{enumerate}
\item [(i).] $P={\rm diag}(d_1,d_2,\dots,d_n)$ and each
$d_i \in \mathcal A$.
\item [(ii).] 
$P = (p_{ij})$, $p_{ij}\in F[t]$ is an upper triangular matrix satisfying 
$p_{ii}\in \mathbb Q$ and $p_{ii}\mathbb Z\neq p_{jj}\mathbb Z\in \mathbb Q/\mathbb Z$, unless $i = j$.
\end{enumerate}
\end{theorem}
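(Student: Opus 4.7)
The plan is to apply Corollary \ref{trivialize} in both parts: $(M_n(K), \mathcal{D}_P)$ is split by $(E, \delta_E)$ if and only if $\delta^c(Y) = P^t Y$ has a solution in ${\rm GL}_n(E)$. Producing such a $Y$ over a finite extension will use the splitting machinery of \S\ref{splitting-fields-of-S}.

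Part (i) is a direct application of Theorem \ref{transcendental-iff-AS-prime-nonzero}. Since $P$ is diagonal, $P^t = P$, and the ansatz $Y = {\rm diag}(y_1, \ldots, y_n)$ reduces the system $\delta^c(Y) = PY$ to the scalar equations $\delta(y_i) = d_i y_i$. Setting $S = \{d_1, \ldots, d_n\} \subseteq \mathcal{A}$, the hypothesis gives $\mathcal{A}_S' = 0$, so Theorem \ref{transcendental-iff-AS-prime-nonzero}(i) furnishes a finite splitting field of $S$, inside which the required nonzero $y_i$ exist and assemble into an invertible $Y$.

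For part (ii), the strategy is to exhibit an isomorphism $(M_n(K), \mathcal{D}_P) \cong (M_n(K), \mathcal{D}_D)$ with $D = {\rm diag}(p_{11}, \ldots, p_{nn})$, and then invoke part (i), which applies because each $p_{ii} \in \mathbb{Q} \subseteq \mathcal{A}$ by Lemma \ref{F-intersection-A-is-Q}. By Proposition \ref{isomorphism}, such an isomorphism amounts to producing $H \in {\rm GL}_n(K)$ with $\delta^c(H) = HP - DH$. I seek $H = I + M$ with $M$ strictly upper triangular; writing $N := P - D$, the equation becomes
\begin{equation*}
\delta^c(M) = N + [M, D] + MN,
\end{equation*}
whose $(i,j)$-entry for $i < j$ reads
\begin{equation*}
\delta(M_{ij}) - (p_{jj} - p_{ii}) M_{ij} = p_{ij} + (MN)_{ij}.
\end{equation*}
Since $M$ and $N$ are both strictly upper triangular, $(MN)_{ij}$ involves only $M_{ik}$ with $i < k < j$, so an induction on $j - i$ reduces the whole construction to solving one scalar ODE at a time, with right-hand side already determined in $F[t]$.

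The crux is this scalar ODE $\delta(y) - \alpha y = f$, with $f \in F[t]$ and $\alpha = p_{jj} - p_{ii}$. The hypothesis $p_{ii}\mathbb{Z} \neq p_{jj}\mathbb{Z}$ in $\mathbb{Q}/\mathbb{Z}$ says exactly that $\alpha \in \mathbb{Q} \setminus \mathbb{Z}$. Writing $f = \sum_k c_k t^k$ and $y = \sum_k d_k t^k$ and using $\delta(t^k) = k t^k$, the equation reduces coefficient-by-coefficient to $(k - \alpha) d_k = c_k$; since $\alpha \notin \mathbb{Z}$, the factor $k - \alpha$ never vanishes for integer $k$, so we may set $d_k = c_k/(k - \alpha) \in F$ and obtain $y \in F[t]$. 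This non-resonance step is the heart of the argument and the only place where the mod-$\mathbb{Z}$ separation hypothesis is needed; had any $p_{jj} - p_{ii}$ been an integer, the corresponding coefficient could be obstructed. The resulting $H = I + M \in {\rm GL}_n(F[t]) \subseteq {\rm GL}_n(K)$ (invertibility is automatic since $\det H = 1$) yields the desired isomorphism, and part (i) completes the proof.
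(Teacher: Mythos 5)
Your proof is correct. Part (i) is essentially the paper's own argument: reduce to the scalar equations $\delta(y_i)=d_iy_i$, note $\mathcal A_S'=0$, and invoke Theorem \ref{transcendental-iff-AS-prime-nonzero} together with Corollary \ref{trivialize}. For part (ii), however, you take a genuinely different route. The paper stays over the finite extension $E$ that splits $S=\{p_{11},\dots,p_{nn}\}$ and directly builds a fundamental solution of $\delta^c(Y)=P^tY$ there: a lower triangular $Z=(z_{ij})$ with $z_{ij}=r_{ij}u_j$, $r_{ij}\in C_E[t]$, constructed column by column, the key point being that $l+p_{jj}-p_{nn}\neq 0$ for integers $l$ because the diagonal entries are incongruent mod $\mathbb Z$. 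You instead perform a gauge transformation over $K$ itself: with $H=I+M$ unipotent and $N=P-D$ strictly upper triangular, the equation $\delta^c(H)=HP-DH$ from Proposition \ref{isomorphism} reduces, by induction on $j-i$, to scalar equations $\delta(y)-(p_{jj}-p_{ii})y=f$ with $f\in F[t]$, solved coefficientwise since $k-(p_{jj}-p_{ii})\neq 0$ for all integers $k$ --- the same non-resonance, used one conjugation earlier. This yields the stronger intermediate statement that $(M_n(K),\mathcal D_P)$ is isomorphic over $K$ to $(M_n(K),\mathcal D_D)$ with $D={\rm diag}(p_{11},\dots,p_{nn})$, after which part (i) (via Lemma \ref{F-intersection-A-is-Q}, since $\mathbb Q\subseteq\mathcal A$) finishes; the paper's construction, by contrast, exhibits the solution matrix in ${\rm GL}_n(E)$ explicitly without passing through an isomorphism over $K$. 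The one step you leave implicit --- that an isomorphism over $K$ transports splitting fields --- is immediate (base-change the isomorphism, or note that if $\delta^c(Z)=D^tZ$ with $Z\in{\rm GL}_n(E)$ then $Y=H^tZ$ solves $\delta^c(Y)=P^tY$), so there is no gap.
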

\begin{proof}
$(i).$ Since $P = \diag(d_1,d_2,\dots,d_n)$, the set of eigenvalues of $P$ is $S = \{d_1,d_2,\dots,d_n\}$, which, by hypothesis, is contained in $\mathcal A$. Thus $\mathcal A_S' = 0$ and from Theorem \ref{transcendental-iff-AS-prime-nonzero}, there exists a finite extension $E$ over $K$ such that the equation $\delta(y)=d_iy$ has a solution $u_i\in E \setminus \{0\}$. Thus the equation $\delta^c(Y)=P^t Y$ has a solution ${\rm diag}(u_1,u_2,\dots, u_n) \in {\rm GL}_n(E)$, where
$Y=(y_{ij})$ is an $n \times n$ matrix of indeterminates. By Corollary \ref{trivialize}, the extension $E/K$ splits 
$(M_n(K), \mathcal D_P)$. \\

$(ii).$ Since $P$ is upper triangular, $S := \{p_{11}, p_{22}, \cdots, p_{nn}\}$ is the set of eigenvalues of $P$. By hypothesis, $S \subseteq \mathbb Q$ and by Lemma \ref{F-intersection-A-is-Q}, $\mathbb Q= F \cap \mathcal A$. Thus, $S \subseteq \mathcal A$ and $\mathcal A_S' = 0$.
Now by Theorem \ref{transcendental-iff-AS-prime-nonzero}, there exists a finite extension $E/K$
and $u_i \in E \setminus \{0\}$ such that $\delta_E(u_i) = p_{ii}u_i$. We show that $E$ splits $(M_n(K), \mathcal D_P)$. By Corollary \ref{trivialize}, it suffices to show that the equation $\delta^c(Y)=P^tY$ has a solution
in ${\rm GL}_n(E)$. It is enough to find a lower triangular matrix with nonzero diagonal entries satisfying $\delta^c(Y)=P^tY$. Let $Y=(y_{ij})$, where $y_{ij}$ are indeterminates, and write the $j^{\rm th}$ column of the above system of equations
\begin{equation}\label{eqnn}
\delta(y_{ij})=\sum_{k=j}^{i}p_{ki}y_{kj}
\end{equation}
for $j \leq i \leq n$.
We construct a lower triangular matrix $Z = (z_{ij})$, with $z_{jj} = u_j \in E\setminus \{0\}$ as a solution of this system. 	

For each $j \leq n$, we use induction on $i$, with $j \leq i \leq n$, to show that
there exist $r_{ij} \in C_E[t]$ such that the elements $z_{jj}, \cdots, z_{ij}, \cdots, z_{nj} \in E$ defined by $z_{ij} := r_{ij} u_j\in E$,
are solutions of the system of equations in \ref{eqnn}.
For $i=j$, this is evident with $r_{jj}=1$ and $z_{jj}=u_j$, since the corresponding equation in the system \ref{eqnn} is $\delta(y_{jj})=p_{jj}y_{jj}$.	
By induction hypothesis, assume that for $j\leq i\leq n-1$, there exist $r_{ij}\in C_E[t]$ such that $z_{ij} := r_{ij} u_j\in E$ satisfies first $n-j$ equations of \ref{eqnn}. The last equation is 
$$\delta(y_{nj})=\sum_{k=j}^{n}p_{kn}y_{kj}.$$

Substituting $y_{kj} = z_{kj} := r_{kj} u_j$, whenever $j\leq k\leq n-1$, we get 
$$\delta(y_{nj})-p_{nn} y_{nj} = \sum_{k=j}^{n-1} p_{kn} r_{kj} u_j.$$

We multiply this equation by $u_n^{-1}$ and use 
$\delta_E(u_n^{-1})=-p_{nn} u_n^{-1}$, to obtain

\begin{equation}\label{eqnn_2}
\delta(y_{nj} u_n^{-1})=\delta(y_{nj})u_n^{-1}-p_{nn}y_{nj}u_n^{-1}=\left(\sum_{k=j}^{n-1} p_{kn} r_{kj}\right) u_ju_n^{-1}.
\end{equation}

We expand the polynomial
$\sum_{k=j}^{n-1} p_{kn} r_{kj} \in C_E[t]$ in powers of $t$ and write
$$\sum_{k=j}^{n-1} p_{kn} r_{kj} = 
\sum_{l=1}^{m}a_{njl}t^l,$$
where $a_{njl}\in C_E$. Thus
\begin{equation}\label{eqnn_3}
\delta(y_{nj} u_n^{-1})=\sum_{l=1}^{m}a_{njl}t^l u_ju_n^{-1}.
\end{equation}
Since for each $j\leq n-1$, $p_{nn}\neq p_{jj}$ modulo $\mathbb Z$, the rational number $l+p_{jj}-p_{nn}$ is nonzero for each $l \in \mathbb Z$. We compute

$$
\delta_E((l+p_{jj}-p_{nn})^{-1} a_{njl} t^l u_j u_n^{-1})
= a_{njl} t^l u_j u_n^{-1}.
$$

Making this substitution in \ref{eqnn_3},
$$
\delta(y_{nj} u_n^{-1})=\delta_E\left(\sum_{l=1}^{m}
(l+p_{jj}-p_{nn})^{-1} a_{njl}t^l u_ju_n^{-1}\right).
$$
We define $r_{nj} :=\sum_{l=1}^{m}
(l+p_{jj}-p_{nn})^{-1} a_{njl}t^l \in C_E[t]$ and rewrite the above equation as 
$$\delta(y_{nj} u_n^{-1})=\delta_E\left(r_{nj} u_ju_n^{-1}\right).
$$
Thus $z_{nj} := r_{nj} u_j\neq 0$ is a solution of \ref{eqnn_3}.

Finally, we vary $j$ to get $z_{ij} := r_{ij} u_j \in E$, for each $1 \leq j \leq i \leq n$, with $r_{jj} = 1$.
such that the lower triangular matrix $Z:=(z_{ij})\in M_n(E)$ satisfies $\delta^c(Y)=p^tY$. Since each $z_{jj} = u_j$ is nonzero, $Z \in {\rm GL}_n(E)$. 
\end{proof}

\subsection{Minimal transcendence degree of a splitting field.} \label{section-transcendental-extensions-splitting-dcsa}

In this subsection, we are concerned about finding transcendence degrees of splitting fields of differential matrix algebras which do not admit a finite splitting field. 

In the following theorem, we assume our field to be $K=F(t)$ with derivation $\delta$ such that $\delta(a)=0$ for each $a\in F$ and $\delta(t)=t$.

\begin{theorem}\label{tr_splitting_field}
Let $0 \neq P\in M_n(F)$ be a traceless matrix. Let $S \subseteq \overline{F}$ be the set of eigenvalues of $P$ and $k = {\rm dim}_{\mathbb Q}({\rm span}_{\mathbb Q}	(S\cup \{1\} ))$.
Let $(E,\delta_E)$  be a differential field extension of $(K, \delta)$ that splits the differential matrix algebra $(M_n(K), \mathcal{D}_P)$. 
\begin{enumerate}
\item[(i).] If $P$ is diagonalizable over $\overline{F}$, then ${\rm trdeg}_K(E) \geq k-1$.
\item[(ii).] If $P$ is not diagonalizable over $\overline{F}$, then ${\rm trdeg}_K(E) \geq k$.
\end{enumerate}
\end{theorem}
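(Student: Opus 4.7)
The plan is to reduce to the algebraic closure $\overline F$ of $F$, after which case (i) follows directly from Corollary \ref{trdeg-and-rank-AS-prime} and case (ii) follows from an adaptation of Corollary \ref{trdeg-and-AS-prime-and-derivative-1}. I first pick a compositum $\overline E$ of $E$ and $\overline F$ inside a common ambient field; since $\overline F/F$ is algebraic, so is $\overline E/E$, whence ${\rm trdeg}_K(E) = {\rm trdeg}_{\overline K}(\overline E)$, where $\overline K := \overline F(t)$. The derivation $\delta$ extends uniquely to $\overline K$ and then to $\overline E$, and $\overline E$ splits $(M_n(\overline K), \mathcal{D}_P)$. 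Letting $J$ be a Jordan canonical form of $P$ over $\overline F$, the conjugating matrix lies in $M_n(\overline F) \subseteq M_n(C_{\overline K})$, so Lemma \ref{similar-matrices} produces a solution of $\delta^c(Y) = J^t Y$ in ${\rm GL}_n(\overline E)$.

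The next step is the key rank computation ${\rm rank}(\mathcal A_S') = k - 1$, where $\mathcal A, \mathcal A_S, \mathcal A_S'$ refer now to the differential field $(\overline K, \delta)$. The proof of Lemma \ref{F-intersection-A-is-Q} is insensitive to the choice of base field and gives $\overline F \cap \mathcal A = \mathbb Q$; since $S \subseteq \overline F$, this yields $\mathcal A_S = \langle S \rangle \cap \mathbb Q$. A two-case split on whether $1 \in {\rm span}_{\mathbb Q}(S)$ (so that $\langle S \rangle \cap \mathbb Q$ is either infinite cyclic of rank $1$ or trivial) shows ${\rm rank}(\langle S \rangle) - {\rm rank}(\mathcal A_S) = k - 1$ in both situations. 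In case (i), $J$ is diagonal with entries in $S$, so solvability of $\delta^c(Y) = J^t Y$ in ${\rm GL}_n(\overline E)$ amounts to $\overline E$ being a splitting field of $S$, and Corollary \ref{trdeg-and-rank-AS-prime} directly gives ${\rm trdeg}_K(E) = {\rm trdeg}_{\overline K}(\overline E) \geq k - 1$.

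For case (ii), a Jordan block of size $\ell > 1$ in $J$ with some eigenvalue $\lambda$ contributes the equations $\delta(y_{1j}) = \lambda y_{1j}$ and $\delta(y_{2j}) = y_{1j} + \lambda y_{2j}$; invertibility of the solution matrix forces some column $j$ with $y_{1j} \neq 0$, and then Lemma \ref{derivative_1} produces $w \in \overline E$ with $\delta(w) = 1$. A short partial-fraction computation in $\overline F(t)$ rules out any $w \in \overline K$ satisfying $\delta(w) = 1$, so $w$ is transcendental over $\overline K$. Setting $L = \overline K(w)$, Lemma \ref{A_K-and-A_K(w)} identifies $(\mathcal A_L)_S$ with $\mathcal A_S$, so ${\rm rank}((\mathcal A_L)_S') = k - 1$; applying Corollary \ref{trdeg-and-rank-AS-prime} over $L$ to the splitting field $\overline E$ of $S$ yields ${\rm trdeg}_L(\overline E) \geq k - 1$, and the tower formula together with ${\rm trdeg}_{\overline K}(L) = 1$ gives ${\rm trdeg}_K(E) \geq k$.

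The main subtlety is that Corollary \ref{trdeg-and-AS-prime-and-derivative-1} as stated requires the second-order equation $\delta(y_2) = y_1 + a y_2$ for \emph{every} $a \in S$, whereas a single nontrivial Jordan block delivers it only for one $\lambda$. Inspection of that corollary's proof shows, however, that the second-order equation is invoked solely to extract one element $w$ with $\delta(w) = 1$, and then the splitting-field hypothesis for $S$ does all the remaining work; so the bound is obtained by mimicking the argument of the corollary, rather than invoking it as a black box. The only other point requiring care is the base-field change at the outset, namely verifying that splitting transfers from $E$ to $\overline E$ and that ${\rm trdeg}$ is preserved, both of which rest on $\overline E/E$ being algebraic.
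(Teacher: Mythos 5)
Your proposal is correct and follows essentially the same route as the paper: reduce to $\overline{F}(t)$, pass to a Jordan form via Lemma \ref{similar-matrices} and Corollary \ref{trivialize}, compute ${\rm rank}(\mathcal A_S') = k-1$ using Lemma \ref{F-intersection-A-is-Q}, then apply Corollary \ref{trdeg-and-rank-AS-prime} in case (i) and the mechanism of Lemma \ref{derivative_1} and Lemma \ref{A_K-and-A_K(w)} in case (ii). The only difference is cosmetic: where you carefully re-run the proof of Corollary \ref{trdeg-and-AS-prime-and-derivative-1} because its hypothesis asks for the second-order equation for every $a \in S$, the paper simply cites that corollary with the single available $a$, its proof only ever using one such equation to produce $w$ with $\delta(w)=1$.
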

\begin{proof} Since 
${\rm trdeg}_K(E) = {\rm trdeg}_{\overline{F}(t)}(E\overline{F})$, where $E\overline{F}$ is the subfield of $\overline{E}$ generated by $E$ and $\overline{F}$, we may assume that $F$ is algebraically closed.

\noindent $(i).$ Since $P$ is diagonalizable over $\overline{F}  =F$ and $(E, \delta_E)$ splits $(M_n(K), \mathcal D_P)$, as in the proof of Theorem \ref{split-by-finite}, for each $\lambda \in S$, the field $E$ contains solutions of $\delta(y)=\lambda y$. 
Further, $\mathcal A_S = \langle S \rangle \cap \mathcal A \subseteq F \cap \mathcal A = \mathbb Q$, where the last equality holds by Lemma \ref{F-intersection-A-is-Q}. Thus by Corollary \ref{trdeg-and-rank-AS-prime}, ${\rm trdeg}_K(E)\geq {\rm rank}(\mathcal A_S') = {\rm dim}_{\mathbb Q}({\rm span}_{\mathbb Q} (S\cup \{1\} ))-1 = k-1$.

$(ii).$ Suppose that $P$ is not diagonalizable over $\overline{F} = F$. Then, as in the proof of Theorem \ref{split-by-finite}, the field $E$ contains a solution of the set of equations $\delta(y)=\lambda y$ for all $\lambda\in S$ and for some $a\in S$, $E$ contains solution of the system $\delta(y)=ay$, $\delta(y_1)=y+a y_1$. Thus by Corollary \ref{trdeg-and-AS-prime-and-derivative-1}, ${\rm trdeg}_K(E) \geq 1 + {\rm rank}(\mathcal A_S') = {\rm dim}_{\mathbb Q}({\rm span}_{\mathbb Q} (S\cup \{1\} ))=k$.
\end{proof}


From the above theorem, the only case where $(M_n(K), {\mathcal D}_P)$ admits a finite splitting field is when $P$ is diagonalizable 
over $\overline{F}$ and $k = 1$. However, if we extend the zero derivation of $F$ to $K$,
by defining $\delta_{c,m}(t) = ct^m$, where $m \neq 1$ is an integer and $0\neq c\in F$, then such a possibility is not realized.
In the following theorem we find sharp bounds on transcendence degrees of the extensions that split differential matrix 
algebra $(M_n(K),\mathcal{D}_P)$ over $(F(t), \delta_{c,m})$, where $P \in M_n(F)$ is a traceless matrix.

\begin{theorem}\label{tr_splitting_field_ct-m}
Let $K = F(t)$, $0 \neq c \in F$ and $m \neq 1$ be an integer. Let $\delta_{c,m}$ be the derivation on $K$ defined by $\delta_{c,m}(a)=0$, if $a\in F$, and $\delta_{c,m}(t)=ct^m$. Let $0\neq P\in M_n(F)$ be a traceless matrix and $(E,\delta_E)$ be a differential field extension of $(K, \delta_{c,m})$ that splits $(M_n(K), \mathcal{D}_P)$. Then $E$ is not algebraic over $K$. In fact, ${\rm trdeg}_{K}(E) \geq {\rm rank}(\langle S \rangle)$, where
$S \subseteq \overline{F}$ is the set of eigenvalues of $P$.
\end{theorem}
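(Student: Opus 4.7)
The plan is to adapt the strategy of Theorem \ref{tr_splitting_field}(i) to the derivation $\delta_{c,m}$ with $m \neq 1$. The crux is to replace Lemma \ref{F-intersection-A-is-Q} by the stronger statement that, for this derivation, one has $F \cap \mathcal A = \{0\}$; once this is available, the machinery of \S\ref{splitting-fields-of-S} delivers the rank bound almost mechanically.

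First, I would reduce to the case $F = \overline F$ algebraically closed: since ${\rm trdeg}_K(E) = {\rm trdeg}_{\overline F(t)}(E\overline F)$ and the compositum $E\overline F$ still splits $(M_n(K), \mathcal D_P)$ over $(\overline F(t), \delta_{c,m})$, no generality is lost. Writing $\delta_{c,m} = c t^m \partial_t$, any relation $\delta_{c,m}(v) = nav$ with $a \in F$, $n \in \mathbb N$, and $v \in F(t)^{\times}$, rearranges to $\partial_t(v)/v = (na/c)\, t^{-m}$. Since $F$ is algebraically closed, $v$ factors as a scalar times $\prod_j (t - \alpha_j)^{k_j}$, so the left-hand side has the partial-fraction form $\sum_{\alpha} k_\alpha/(t-\alpha)$ with $k_\alpha \in \mathbb Z$; in particular it has only simple poles in $t$ and vanishes as $t \to \infty$. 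The right-hand side $(na/c)\, t^{-m}$ fails at least one of these properties whenever $a \neq 0$ and $m \neq 1$: for $m = 0$ it is a nonzero constant, for $m \geq 2$ it has a pole of order $m \geq 2$ at $0$, and for $m \leq -1$ it is a nonzero polynomial of degree $-m$. Each case is a contradiction, so $F \cap \mathcal A = \{0\}$.

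With this key fact in place, $\mathcal A_S = \langle S \rangle \cap \mathcal A \subseteq F \cap \mathcal A = \{0\}$, so the $\mathbb Z$-complement $\mathcal A_S'$ of $\mathcal A_S$ inside $\langle S \rangle$ coincides with $\langle S \rangle$ itself, whence ${\rm rank}(\mathcal A_S') = {\rm rank}(\langle S \rangle)$. To finish, I would argue as in Theorem \ref{tr_splitting_field}(i): bring $P$ to a Jordan form $J$ over $F$ and invoke Lemma \ref{similar-matrices} to transfer a solution $Y \in {\rm GL}_n(E)$ of $\delta^c(Y) = P^t Y$ to one of $\delta^c(Y) = J^t Y$; reading off the top-left entry of each Jordan block then produces a nonzero solution of $\delta(y) = \lambda y$ in $E$ for every $\lambda \in S$. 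Thus $(E, \delta_E)$ is a splitting field of $S$ in the sense of \S\ref{splitting-fields-of-S}, and Corollary \ref{trdeg-and-rank-AS-prime} yields ${\rm trdeg}_K(E) \geq {\rm rank}(\mathcal A_S') = {\rm rank}(\langle S \rangle)$. In particular, whenever $P$ possesses a nonzero eigenvalue, this rank is at least $1$ and $E$ is forced to be transcendental over $K$. The main obstacle is the pole/asymptotic analysis behind $F \cap \mathcal A = \{0\}$; once that is in hand, the remainder is routine bookkeeping with the tools already developed.
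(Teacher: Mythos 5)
Your proposal is correct and follows essentially the same route as the paper: show that no nonzero element of $\overline F$ lies in $\mathcal A$ for $\delta_{c,m}$ with $m\neq 1$, conclude $\mathcal A_S'=\langle S\rangle$, extract nonzero solutions of $\delta(y)=\lambda y$ in $E$ from a Jordan form of $P$ via Lemma \ref{similar-matrices} and Corollary \ref{trivialize}, and then apply Corollary \ref{trdeg-and-rank-AS-prime}. The only (immaterial) differences are that you verify $F\cap\mathcal A=\{0\}$ by a partial-fraction/pole argument where the paper compares degrees and $t$-adic orders in $(f'g-fg')ct^m=kfg$, and that you treat diagonalizable and non-diagonalizable $P$ uniformly, whereas the paper separates the latter case and invokes Lemma \ref{derivative_1} with the element $w$ satisfying $\delta_{c,m}(w)=1$ -- not needed for the stated bound (just note that in the first row of each Jordan block some entry, not necessarily the top-left one, must be nonzero since $Y$ is invertible).
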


\begin{proof}
Let.
$$\mathcal A=\{a \in \overline F(t) : \delta_{c,m}(y) = nay \text{ has a nonzero solution in } \overline F(t) \text{ for some } n \in \mathbb N\}.$$
We claim that $\mathcal A_S := \mathcal A \cap S = 0$. Since $\mathcal A = {\rm span}_{\mathbb Q}({\mathcal L}_{\delta_{c,m}}(\overline{F}(t)))$ and
$S \subseteq \overline{F}$, it is enough to show that $\delta_{c,m}(a)a^{-1} \notin \overline{F}$ for every
$a\in \overline{F}(t)\setminus \overline{F}$.
Let $a=fg^{-1}\in \overline{F}(t)\setminus \overline{F}$, where $f,g \in \overline{F}[t]$
are polynomials of degrees $r$ and $s$, respectively, and ${\rm gcd}(f,g)=1$. Then
$$\delta_{c,m}(a)=\frac{\delta_{c,m}(f)g-f\delta_{c,m}(g)}{g^2}=\frac{(f'g-fg')ct^m}{g^2},$$
where $f'$ and $g'$ denote the images of the derivation defined by $t' = 1$.
If $\delta_{c,m}(a)a^{-1} = k \in \overline{F}$, then
the above equation may be rewritten as
\begin{equation}\label{eq_1}
(f'g-fg')ct^m=kfg.
\end{equation}
We compare the degrees on both sides of the equation \ref{eq_1} and obtain
$$r+s+m-1 \geq r+s.$$
If $m\leq 0$, the above inequality cannot hold. Thus $\delta_{c,m}(a)a^{-1} \notin \overline{F}$.

Consider the case where $m > 1$.
Let $\ell$ be the highest power of $t$ that divides $f$. Since $\gcd(f,g) = 1$ and $t^{\ell - 1}$ divides $f'g - fg'$, comparing highest powers of $t$ dividing the two sides of the equation \ref{eq_1}, $\ell - 1 + m \leq \ell$. This is a contradiction, since $m > 1$. Thus
$\delta_{c,m}(a)a^{-1} \notin \overline{F}$.

We conclude that $\mathcal A_S = 0$ in either case. Consequently, $\mathcal A_S' = \langle S \rangle$. We now proceed to the proof of the theorem. Let $(E, \delta_E)$ be a differential extension of $(K, \delta_{c,m})$ that splits $(M_n(K), \mathcal D_P)$.

First, assume that $P$ is diagonalizable. 
By Corollary \ref{trivialize}, $E$ contains solutions of differential equations $\delta_{c,m}(y)=a y$ for each $a \in S$. Now by 
Corollary \ref{trdeg-and-rank-AS-prime}, ${\rm trdeg}_{K\overline{F}}(E\overline{F}) \geq {\rm rank}(\mathcal A_S')= {\rm rank}(\langle S\rangle)$. 
As $\overline{F}(t)/K$ is algebraic, ${\rm trdeg}_{K}(E) = {\rm trdeg}_{\overline{F}(t)}(E\overline{F})\geq {\rm rank}(\langle S\rangle)$.

Now assume that $P$ is not diagonalizable. 
By Corollary \ref{trivialize}, $E$ contains solutions of equations $\delta_{c,m}(y)=ay$ and $\delta_{c,m}(y_1)=y+ay_1$ for some $a\in S$. Observe that $w := (c(m-1))^{-1}t^{1-m} \in K$ is such that $\delta(w)=1$. We now use Lemma \ref{derivative_1}, Corollary \ref{trdeg-and-rank-AS-prime} and argue as in the case when $P$ is diagonalizable to show that ${\rm trdeg}_{K}(E) \geq {\rm rank}(\mathcal A_S')= {\rm rank}(\langle S\rangle)$. 
\end{proof}

We assert that the bounds on ${\rm trdeg}_K(E)$ in Theorem \ref{tr_splitting_field} and Theorem \ref{tr_splitting_field_ct-m} are sharp. Using Proposition \ref{deg_tr}, we may construct a field $E$ whose transcendence degree is equal to these bounds. Using the solutions of $\delta(y) = \lambda y$; $\lambda \in S$ it is easy to construct a block diagonal matrix $Z \in {\rm GL}_n(E)$ consisting of lower triangular invertible blocks such that $\delta^c(Z) = J_P^tZ$, where $J_P$ is a Jordan form of $P$.
This, in conjunction with Corollary \ref{trivialize} and Lemma \ref{similar-matrices}, proves the assertion.

\section{Splitting of tensor powers}\label{section-splitting-of-tensor-powers}
Let $K = F(t)$ and $\delta$ the derivation on $K$ defined by $\delta(a)=0$ for $a\in F$ and $\delta(t)=t$. In this section, we are concerned about the splitting of tensor powers of a matrix differential algebra $(M_n(K),\mathcal D_P)$. We also relate it to the existence of finite splitting fields of $(M_n(K),\mathcal D_P)$.

\begin{theorem}\label{no-algebraic-then-infinite-order}
Let $P \in M_n(F)$ be a traceless matrix.
Let $(M_n(K),\mathcal{D}_P)$ be a differential matrix algebra over $K$. 
If $(M_n(K),\mathcal{D}_P)$ does not admit a finite splitting field over $(K, \delta)$, then the order of $(M_n(K),\mathcal{D}_P)$ is infinite.
\end{theorem}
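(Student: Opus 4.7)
The natural strategy is to prove the contrapositive: assume the order is finite and deduce that a finite splitting field exists. This routes the argument entirely through the tensor machinery of Section 2 combined with Theorem \ref{split-by-finite} and Corollary \ref{integer eigenvalues}, with no new analytic input needed.

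First I would pick $m \geq 1$ realizing the order, so that $(M_n(K),\mathcal{D}_P)^{\otimes m}$ is split. Let $Q$ be the traceless matrix with $(M_{n^m}(K),\mathcal{D}_Q) = (M_n(K),\mathcal{D}_P)^{\otimes m}$, obtained by iterating Lemma \ref{tensor}. Since the formula of Lemma \ref{tensor} expresses $Q$ as a sum of Kronecker products of $P$'s with identity matrices, and $P \in M_n(F)$, the matrix $Q$ lies in $M_{n^m}(F)$. This is the key structural observation that lets us apply results stated only for constant matrices.

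Next, since $(M_{n^m}(K),\mathcal{D}_Q)$ is split and $Q \in M_{n^m}(F)$, Corollary \ref{integer eigenvalues} applies and forces $Q$ to be diagonalizable with all eigenvalues in $\mathbb{Z}$. In particular, every eigenvalue of $Q$ is rational. Now Lemma \ref{m_fold} bridges back to $P$: the diagonalizability of $Q$ implies the diagonalizability of $P$, and the rationality of the eigenvalues of $Q$ implies the rationality of the eigenvalues of $P$. Finally Theorem \ref{split-by-finite} produces a finite splitting field of $(M_n(K),\mathcal{D}_P)$, contradicting the hypothesis. This yields the contrapositive, hence the theorem.

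There is really no hard step; the whole argument is a bookkeeping exercise combining three prior results. The only place one has to be a little careful is the very first step, namely verifying $Q \in M_{n^m}(F)$ so that Corollary \ref{integer eigenvalues} (which assumes a matrix over $F$) is applicable; once that is in place, Lemma \ref{m_fold} and Theorem \ref{split-by-finite} close the loop immediately.
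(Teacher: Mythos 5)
Your proposal is correct and follows essentially the same route as the paper: pass to the $m$-fold tensor power with matrix $Q$, invoke Corollary \ref{integer eigenvalues} (via Theorem \ref{split-by-finite}) to get $Q$ diagonalizable with integer eigenvalues, transfer these properties back to $P$ by Lemma \ref{m_fold}, and apply Theorem \ref{split-by-finite} to contradict the hypothesis. Your explicit check that $Q \in M_{n^m}(F)$ is a point the paper leaves implicit, but otherwise the arguments coincide.
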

\begin{proof}
Let $m$ be a positive integer such that 
$(A,\mathcal{D}_Q) := (M_n(K),\mathcal{D}_P)^{\otimes m}$ is split over $(K, \delta)$. By Theorem \ref{split-by-finite} and Corollary \ref{integer eigenvalues}, $Q$ is diagonalizable, and all its eigenvalues are integers. Hence by Lemma \ref{m_fold}, $P$ is diagonalizable, and all the eigenvalues of $P$ are rational.
By Theorem \ref{split-by-finite}, $(M_n(K),\mathcal{D}_P)$ is split by a finite extension over $(K, \delta)$, which is a contradiction to the hypothesis.
\end{proof}

To construct an example of a differential matrix algebra of finite order, let us consider the product 
$(M_2(K), \mathcal D_{P_1}) \otimes
(M_2(K), \mathcal D_{P_2})$, where $P_1,P_2 \in M_2(F)$ are traceless diagonalizable matrices with rational eigenvalues $\pm \lambda_1$ and $\pm \lambda_2$, respectively. Consider the differential matrix algebra $(M_2(K),\mathcal D_{P_1})\otimes (M_2(K),\mathcal D_{P_2})$. Derivation on this algebra is $\mathcal D_{P_1}\otimes 1+1 \otimes \mathcal D_{P_2}=\mathcal D_{P_1\otimes 1+1\otimes P_2}$. From Lemma \ref{tensor_n},
eigenvalues of $P_1\otimes 1+1\otimes P_2$ are $\pm \lambda_1 \pm \lambda_2$.
Thus by Theorem \ref{split-by-finite} and Corollary \ref{deg_E}, there exists
a finite extension $E/K$ of degree $d$ that splits $(M_2(K),\mathcal D_{P_1})\otimes (M_2(K),\mathcal D_{P_2})$, where $d$ is given by
$$d= 
\begin{cases}
{\rm lcm}(q_1,q_2)/2 & \text{ if ${\rm lcm}(q_1, q_2)$ is even and $p_1,p_2$ are both odd}\\ 
{\rm lcm}(q_1,q_2) & \text{ otherwise},
\end{cases}
$$
where $\lambda_i = p_i/q_i$ and $p_i, q_i$ are relatively prime integers.
Thus, if each $\lambda_i = \frac{1}{2}$ then $(M_2(K),\mathcal D_{P_1})\otimes (M_2(K),\mathcal D_{P_2})$ is split over $(K,\delta)$ itself.

In the next theorem, we use these ideas to deal with tensor powers of a differential matrix algebra $(M_n(K),\mathcal{D}_P)$. We find conditions on the eigenvalues of $P$ so that the $m$-fold tensor product of $(M_n(K),\mathcal{D}_P)$ is trivial.

\begin{theorem}\label{m-fold_tensor}
Let $P\in M_n(F)$ be a traceless matrix such that the differential matrix algebra
$(M_n(K), \mathcal{D}_P)$  is split by a finite extension. Then the following are equivalent:
\begin{enumerate}
\item[(i).] $(A,\mathcal{D}_Q):=(M_n(K), \mathcal{D}_P)^{\otimes m}$ is split over $(K, \delta)$.
\item[(ii).] If $\lambda$ and $\mu$ are any two eigenvalues of $P$, then $m\{\lambda\} = m\{\mu\}$, where $\{\lambda\}$ and $\{\mu\}$ denote fractional parts of $\lambda$ and $\mu$, respectively.
\end{enumerate}
We note that, since $(M_n(K),\mathcal{D}_P)$ is split by a finite extension, eigenvalues of $P$ are rational and it makes sense to talk about fractional parts.
\end{theorem}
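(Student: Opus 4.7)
The plan is to reduce the splitting of $(A,\mathcal D_Q)$ to a purely combinatorial question about the eigenvalues of $P$, and then verify the combinatorial equivalence directly.

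First, since $(M_n(K),\mathcal D_P)$ admits a finite splitting field, Theorem \ref{split-by-finite} ensures that $P$ is diagonalizable over $\overline F$ with rational eigenvalues. By Lemma \ref{m_fold}, the same properties are inherited by $Q$: it is diagonalizable and has rational eigenvalues. Therefore, applying Corollary \ref{integer eigenvalues} to $(A,\mathcal D_Q)$, statement (i) is equivalent to the assertion that every eigenvalue of $Q$ lies in $\mathbb Z$. At this point the problem is completely translated out of the language of differential algebras.

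Next I invoke Lemma \ref{tensor_n}(i), which describes the eigenvalues of $Q$ explicitly: if $\Lambda=\{\lambda_1,\dots,\lambda_n\}$ is the set of eigenvalues of $P$, then the eigenvalues of $Q$ are precisely the sums $\lambda_{i_1}+\lambda_{i_2}+\cdots+\lambda_{i_m}$ as $(i_1,\dots,i_m)$ ranges over $\{1,\dots,n\}^m$. So condition (i) is equivalent to $\sum_{j=1}^m \lambda_{i_j}\in\mathbb Z$ for every tuple $(i_1,\dots,i_m)$.

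The last step is to show that this ``every sum is an integer'' condition is equivalent to (ii). For the implication (i)$\Rightarrow$(ii), I would test two special tuples: taking $i_1=\cdots=i_m=i$ yields $m\lambda_i\in\mathbb Z$, hence $m\{\lambda_i\}\in\mathbb Z$; and taking $i_1=\cdots=i_{m-1}=i$, $i_m=j$ yields $(m-1)\lambda_i+\lambda_j\in\mathbb Z$, whose difference with $m\lambda_i\in\mathbb Z$ gives $\lambda_j-\lambda_i\in\mathbb Z$, so $\{\lambda_i\}=\{\lambda_j\}$ and consequently $m\{\lambda_i\}=m\{\lambda_j\}$. For the converse (ii)$\Rightarrow$(i), writing each $\lambda_i=k_i+r$ with $k_i\in\mathbb Z$ and $r$ the common fractional part (so that $mr\in\mathbb Z$ by (ii)), any sum $\sum_j \lambda_{i_j}=mr+\sum_j k_{i_j}$ is an integer, giving (i).

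The routine ingredients are all packaged in Lemma \ref{m_fold}, Lemma \ref{tensor_n} and Corollary \ref{integer eigenvalues}; the only substantive point is the necessity step, where one has to recognise that a pair of clever choices of tuples forces both $m\lambda_i\in\mathbb Z$ and $\lambda_i-\lambda_j\in\mathbb Z$, which together are exactly what (ii) records. I do not expect any genuine obstacle; the care needed is just in formulating (ii) precisely (as equality of integers $m\{\lambda\}=m\{\mu\}$, which automatically forces $\{\lambda\}=\{\mu\}$ and $m\{\lambda\}\in\mathbb Z$).
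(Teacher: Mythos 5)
Your proposal is correct and follows essentially the same route as the paper: reduce (i) via Theorem \ref{split-by-finite}, Lemma \ref{m_fold}/\ref{tensor_n} and Corollary \ref{integer eigenvalues} to the integrality of all sums $\lambda_{i_1}+\dots+\lambda_{i_m}$, then test the tuples $i_1=\dots=i_m$ and $i_1=\dots=i_{m-1}\neq i_m$ for necessity and use the common fractional part for sufficiency. The paper's proof makes exactly these choices, so there is nothing to add.
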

\begin{proof}
Let $S := \{\lambda_1,\lambda_2,\dots,\lambda_n\}$ be the set of eigenvalues of $P$. 
Since $(M_n(K), \mathcal D_P)$ is split by a finite extension, by Theorem \ref{split-by-finite}, the matrix $P$ is diagonalizable and $S \subseteq \mathbb Q$.
Since $\mathcal D_Q=\mathcal{D}_P^{\otimes m}$,
by Lemma \ref{tensor_n}, the set of eigenvalues of $Q$ is 
$$\Lambda := \{\lambda_{i_1}+\lambda_{i_2}+\dots +\lambda_{i_m}\,:\,(i_1, i_2, \cdots, i_m) \in \{1,2,\dots,n\}^m\}.$$ 

\emph{(i)}$\Rightarrow$\emph{(ii)}: Since $(A,\mathcal{D}_Q)$ is split over $(K, \delta)$, by Corollary \ref{integer eigenvalues} each $\mu \in \Lambda$ is an integer. Thus, from the particular case $i_1=i_2=\dots =i_m$, each $m\lambda_i\in \mathbb{Z}$. Let $\alpha_i=m\lambda_i$. 
Now, from the case $i_1 = \dots = i_{m-1}\neq i_m$, we get that
$a_{ij} := (m-1)\lambda_i+\lambda_j \in \mathbb Z$, whenever $i \neq j$. Thus $\lambda_j-\lambda_i=a_{ij}-\alpha_i \in \mathbb{{Z}}$, and therefore, $\{\lambda_i\}$ is the same for each $i$.

\emph{(ii)}$\Rightarrow$\emph{(i)}: Let $m\{\lambda_i\}=a \in \mathbb{{Z}}$ for each $i$. Then each eigenvalue
$\lambda_{i_1}+\lambda_{i_2}+\dots +\lambda_{i_m}$ of $Q$ is an integer and by Corollary \ref{integer eigenvalues},
$(A, \mathcal D_Q)$ is split over $(K, \delta)$. 
\end{proof}

From Theorems \ref{no-algebraic-then-infinite-order} and Theorem \ref{m-fold_tensor}, we conclude that if $(M_n(K), \mathcal D_P)$ 
has finite order, then this order is precisely the smallest integer $m$ such that $m\{\lambda_i\} \in \mathbb N$, where $\{\lambda_i\}$ is the constant fractional part of eigenvalues of $P$. In the following corollary we show that the 
order of $(M_n(K),\mathcal{D}_P)$ is necessarily a divisor of $n$.

\begin{corollary}\label{order-divides-degree}
Let $P\in M_n(F)$ be a traceless matrix. If the differential matrix algebra $(M_n(K), \mathcal{D}_P)$ has finite order $m$, then $m$ divides $n$.
\end{corollary}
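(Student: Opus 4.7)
The plan is to deduce the divisibility directly from Theorem \ref{m-fold_tensor} together with the tracelessness of $P$. Since $(M_n(K), \mathcal{D}_P)$ has finite order, Theorem \ref{split-by-finite} forces the eigenvalues of $P$ to lie in $\mathbb{Q}$, so fractional parts make sense, and Theorem \ref{m-fold_tensor} then tells us that all eigenvalues $\lambda_1, \dots, \lambda_n$ share a common fractional part, which I will call $\varphi \in [0,1) \cap \mathbb{Q}$. Moreover, by the same theorem, the order $m$ is characterized as the smallest positive integer with $m\varphi \in \mathbb{Z}$.

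The key observation is that the constraint $\mathrm{trace}(P) = 0$ imposes a divisibility condition on $\varphi$. Indeed, for each $i$ one can write $\lambda_i = k_i + \varphi$ with $k_i \in \mathbb{Z}$ (this uses that all $\lambda_i$ have the same fractional part). Summing and using $\sum_i \lambda_i = 0$, I get
\[
n\varphi = -\sum_{i=1}^n k_i \in \mathbb{Z}.
\]
So $n$ itself satisfies the integrality condition that defines $m$.

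From the characterization of $m$ as the \emph{smallest} positive integer with $m\varphi \in \mathbb{Z}$, it follows immediately that $m \mid n$: writing $\varphi = p/q$ in lowest terms (with $q = m$ when $\varphi \neq 0$), the condition $n\varphi \in \mathbb{Z}$ forces $q \mid np$, hence $q \mid n$ since $\gcd(p,q) = 1$. The case $\varphi = 0$ gives $m = 1$, and the conclusion is trivial.

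There is no real obstacle here; the substance of the result is already carried by Theorem \ref{m-fold_tensor}, and this corollary is essentially an arithmetic consequence of tracelessness. The only point to be careful about is invoking Theorem \ref{m-fold_tensor} correctly: one first needs $P$ to admit a finite splitting field, which is guaranteed by Theorem \ref{no-algebraic-then-infinite-order} since the order is finite. Once that hypothesis is in place, the argument above closes in a few lines.
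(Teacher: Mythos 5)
Your proof is correct and follows essentially the same route as the paper's: both arguments combine Theorem \ref{no-algebraic-then-infinite-order} and Theorem \ref{m-fold_tensor} with tracelessness, the paper deriving $n\alpha_1 + mc = 0$ with $\gcd(\alpha_1,m)=1$ where your version states the same fact more directly as $n\varphi \in \mathbb{Z}$ with $m$ the reduced denominator of the common fractional part $\varphi$. Your phrasing is a cleaner presentation of the identical arithmetic, so there is nothing to add.
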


\begin{proof}
Let $S := \{\lambda_1,\lambda_2,\dots,\lambda_n\}$, be the set of eigenvalues of $P$. Let $a_{ij}, \alpha_i \in \mathbb Z$ be as in the proof of 
Theorem \ref{m-fold_tensor}. Then $(m-1)\alpha_i+\alpha_j=m a_{ij}$. Since $P$ is a traceless matrix, $0 = m\left(\sum_{j=1}^{n}\lambda_j\right) = \sum_{j=1}^{n}\alpha_j$. Substituting $\alpha_j = m(a_{1j}-\alpha_1)+\alpha_1$ whenever $j \neq 1$,
$$\sum_{j=1}^{n}\alpha_j = \alpha_1 + \sum_{j=2}^{n}(\alpha_1 + m(a_{1j}-\alpha_1)) = n\alpha_1+mc=0,$$ 

where $c=\sum_{j=2}^{n}(a_{1j}-\alpha_1) \in \mathbb Z$. Now 
$\alpha_1 = m\lambda_1 = m([\lambda_1] + \{\lambda_1\}) = m[\lambda_1] + a$, where 
$a$ is as in the proof of Theorem \ref{m-fold_tensor} and $[\lambda_1]$ is the integer part of $\lambda_1$. It is now clear that
$\gcd(\alpha_1,m)=\gcd(a,m)=1$ and $n\alpha_1+mc = 0$. Thus $m$ divides $n$.
\end{proof}
We conclude this article by asking the following intriguing question. \emph{"Let $(A, \mathcal D)$ be a differential central simple algebra over a differential field $(K, \delta)$. If $(A, \mathcal D)^{\otimes m}$ is split for some $m \in \mathbb N$, then does $m$ necessarily divide ${\rm deg}(A)$?"}

\bibliographystyle{amsalpha}
\bibliography{dcsa}
\end{document}